% !TeX spellcheck = en_GB

\documentclass{amsart}

%packages------------------------------
\usepackage{kotex}
\usepackage{amsmath}
\usepackage{amsfonts}
\usepackage{amscd}
\usepackage{slashed}

\usepackage{amssymb}
\usepackage{graphicx}
\usepackage{caption}
\usepackage{subcaption}
\usepackage{dsfont}
\usepackage{color}
\usepackage{hyperref}
\usepackage{bbm}
\usepackage{a4wide}
\usepackage{sseq}
\usepackage{tikz-cd}
\usepackage[abs]{overpic} 

%\usepackage{showlabels}

% {#1} }{#2}
%theorems------------------------------
\newtheorem{theorem}{Theorem}[section]
\newtheorem{lemma}[theorem]{Lemma}
\newtheorem{proposition}[theorem]{Proposition}
\newtheorem{corollary}[theorem]{Corollary}

\newtheorem{example}[theorem]{Example}

\theoremstyle{definition}
\newtheorem*{definition*}{Definition}
\newtheorem{definition}[theorem]{Definition}

\theoremstyle{remark}
\newtheorem*{remark*}{Remark}
\newtheorem{remark}[theorem]{Remark}

\theoremstyle{question}
\newtheorem*{question*}{Question}

\numberwithin{equation}{section}

%newcommands------------------------------

\newcommand{\myproof}[2]{Proof of {#1} {#2}}

%Declaring------------------------------

%paragraph style--------------------------
%\setlength{\parindent}{0pt}
%\setlength{\parskip}{1ex}

%For detailed table of contents
\setcounter{tocdepth}{1}
\setcounter{secnumdepth}{4}
\begin{document}

\title[Quantization of K\"ahler Manifolds via Brane Quantization]{Quantization of K\"ahler Manifolds via Brane Quantization}

\author{YuTung Yau}
\address{Kavli Institute for the Physics and Mathematics of the Universe (WPI), The University of Tokyo Institutes for Advanced Study, The University of Tokyo, Kashiwa, Chiba 277-8583, Japan}
% Kavli IPMU (WPI), UTIAS, The University of Tokyo, Kashiwa, Chiba 277-8583, Japan
\email{yu-tung.yau@ipmu.jp}

\thanks{}

\maketitle

% \tableofcontents

\begin{abstract}
	In their physical proposal for quantization \cite{GukWit2009}, Gukov-Witten suggested that, given a symplectic manifold $M$ with a complexification $X$, the A-model morphism spaces $\operatorname{Hom}(\mathcal{B}_{\operatorname{cc}}, \mathcal{B}_{\operatorname{cc}})$ and $\operatorname{Hom}(\mathcal{B}, \mathcal{B}_{\operatorname{cc}})$ should recover holomorphic deformation quantization of $X$ and geometric quantization of $M$ respectively, where $\mathcal{B}_{\operatorname{cc}}$ is a canonical coisotropic A-brane on $X$ and $\mathcal{B}$ is a Lagrangian A-brane supported on $M$.\par
	Assuming $M$ is spin and K\"ahler with a prequantum line bundle $L$, Chan-Leung-Li \cite{ChaLeuLi2023} constructed a subsheaf $\mathcal{O}_{\operatorname{qu}}^{(k)}$ of smooth functions on $M$ with a non-formal star product and a left $\mathcal{O}_{\operatorname{qu}}^{(k)}$-module structure on the sheaf of holomorphic sections of $L^{\otimes k} \otimes \sqrt{K}$.\par
	In this paper, we give a careful treatment of the relation between (holomorphic) deformation quantizations of $M$ and $X$. As a result, Chan-Leung-Li's work \cite{ChaLeuLi2023} provides a mathematical realization of the action of $\operatorname{Hom}(\mathcal{B}_{\operatorname{cc}}, \mathcal{B}_{\operatorname{cc}})$ on $\operatorname{Hom}(\mathcal{B}, \mathcal{B}_{\operatorname{cc}})$. By Fedosov's gluing arguments, we also construct a $\mathcal{O}_{\operatorname{qu}}^{(k)}$-$\overline{\mathcal{O}}_{\operatorname{qu}}^{(k)}$-bimodule structure on the sheaf of smooth sections of $L^{\otimes 2k}$ to realize the actions of $\operatorname{Hom}(\mathcal{B}_{\operatorname{cc}}, \mathcal{B}_{\operatorname{cc}})$ and $\operatorname{Hom}(\overline{\mathcal{B}}_{\operatorname{cc}}, \overline{\mathcal{B}}_{\operatorname{cc}})$ on $\operatorname{Hom}(\overline{\mathcal{B}}_{\operatorname{cc}}, \mathcal{B}_{\operatorname{cc}})$, which is related to the analytic geometric Langlands program.
\end{abstract}

\section{Introduction}
\label{Section 1}
In geometric quantization on a compact, spin and prequantizable K\"ahler manifold $(M, \omega)$, a prequantum line bundle $L$ and the K\"ahler polarization $T^{0, 1}M$ are used to obtain a quantum Hilbert space $\mathcal{H}^{(k)} := H^0(M, L^{\otimes k} \otimes \sqrt{K})$, where $\sqrt{K}$ is a square root of the canonical bundle of $M$. A deformation quantization $(\mathcal{C}^\infty(M, \mathbb{C})[[\hbar]], \star)$ of $(M, \omega)$ is specified by its asymptotic action on $\mathcal{H}^{(k)}$ via Toeplitz operators as $\tfrac{\hbar}{\sqrt{-1}} = \tfrac{1}{k} \to 0$ \cite{Cha2006, Sch2000}. The prototype is the \emph{Bargmann-Fock action}, i.e. the action of $\mathbb{C}[[z^1, ..., z^n, \hbar]][\overline{z}^1, ..., \overline{z}^n]$ on $\mathbb{C}[[z^1, ..., z^n, \hbar]]$ given by $z^i \mapsto z^i \cdot$ and $\overline{z}^j \mapsto \hbar \tfrac{\partial}{\partial z^j}$.\par
In \cite{ChaLeuLi2023}, Chan-Leung-Li used Fedosov connections to glue the Bargmann-Fock action as a local model. First, they obtained a sheaf of algebras $\mathcal{O}_{\operatorname{qu}}^{(k)}$, named the \emph{sheaf of quantizable functions of level $k$}. It is a subsheaf of smooth functions on $M$ equipped with a non-formal star product for which $\mathcal{O}_{\operatorname{qu}}^{(k)}(M)$ is dense in $\mathcal{C}^\infty(M, \mathbb{C})$ as $k \to \infty$. Then, they constructed a left $\mathcal{O}_{\operatorname{qu}}^{(k)}$-module structure on the sheaf $\mathcal{L}^{(k)}$ of holomorphic sections of $L^{\otimes k} \otimes \sqrt{K}$ \footnote{Indeed, here we have modified Chan-Leung-Li's original construction by metaplectic correction and taking the anti-Wick ordering of operators. Their method of construction is still valid for this modification.}. It naturally induces a right $\mathcal{O}_{\operatorname{qu}}^{(k)}$-module structure on the sheaf of holomorphic sections of $(L^\vee)^{\otimes k} \otimes \sqrt{K}$ as well.\par
The conjugate complex manifold $\overline{M}$ has a K\"ahler structure given by $- \omega$. In this paper, we further consider the $(\overline{M}, -\omega)$-counterpart $\overline{\mathcal{O}}_{\operatorname{qu}}^{(k)}$ of $\mathcal{O}_{\operatorname{qu}}^{(k)}$ and the sheaf $\overline{\mathcal{L}}^{(-k)}$ of antiholomorphic sections of $L^{\otimes k} \otimes (\sqrt{K})^\vee$ as a right $\overline{\mathcal{O}}_{\operatorname{qu}}^{(k)}$-module. The main result is as follows.

\begin{theorem}
	\label{Theorem 1.1}
	The sheaf $\mathcal{L}_{\operatorname{sm}}^{(2k)}$ of smooth sections of $L^{\otimes 2k}$ has an $\mathcal{O}_{\operatorname{qu}}^{(k)}$-$\overline{\mathcal{O}}_{\operatorname{qu}}^{(k)}$ bimodule structure such that the natural pairing $\mathcal{L}^{(k)} \otimes \overline{\mathcal{L}}^{(-k)} \to \mathcal{L}_{\operatorname{sm}}^{(2k)}$ is a morphism of $\mathcal{O}_{\operatorname{qu}}^{(k)}$-$\overline{\mathcal{O}}_{\operatorname{qu}}^{(k)}$ bimodules.
\end{theorem}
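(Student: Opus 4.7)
The plan is to adapt the Fedosov gluing method that Chan-Leung-Li \cite{ChaLeuLi2023} used to construct the left $\mathcal{O}_{\operatorname{qu}}^{(k)}$-module structure on $\mathcal{L}^{(k)}$, now globalizing two commuting actions simultaneously. The local model I would use is the following bimodule: on $\mathbb{C}[[z^1, \ldots, z^n, \bar z^1, \ldots, \bar z^n, \hbar]]$, let $\mathbb{C}[[z, \hbar]][\bar z]$ act from the left by $z^i \cdot h = z^i h$ and $\bar z^j \cdot h = \hbar \tfrac{\partial h}{\partial z^j}$, and let $\mathbb{C}[[\bar z, \hbar]][z]$ act from the right by $h \cdot \bar z^j = h \bar z^j$ and $h \cdot z^i = \hbar \tfrac{\partial h}{\partial \bar z^i}$. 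A direct check on monomials shows these two actions commute, so $\mathbb{C}[[z, \bar z, \hbar]]$ is a bimodule, and the natural inclusion $\mathbb{C}[[z, \hbar]] \otimes \mathbb{C}[[\bar z, \hbar]] \hookrightarrow \mathbb{C}[[z, \bar z, \hbar]]$ is tautologically a bimodule morphism relative to the local Bargmann-Fock actions used in \cite{ChaLeuLi2023} for $\mathcal{L}^{(k)}$ and $\overline{\mathcal{L}}^{(-k)}$.

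To globalize, I introduce a smooth Fock-type bundle $\mathcal{W}^{(2k)}_{\operatorname{sm}}$ over $M$ whose fiber at $p$ is $\widehat{\operatorname{Sym}}(T_p^{*(1,0)}M) \widehat{\otimes} \widehat{\operatorname{Sym}}(T_p^{*(0,1)}M) \otimes L_p^{\otimes 2k}[[\hbar]]$, and construct a Fedosov connection $D$ on it by combining the Fedosov connection used in \cite{ChaLeuLi2023} for $\mathcal{L}^{(k)}$ with its complex conjugate counterpart for $\overline{\mathcal{L}}^{(-k)}$. By the standard iterative argument, $D$ is flat and $\ker D$ can be identified with $\mathcal{L}_{\operatorname{sm}}^{(2k)}$ via evaluation at $z = \bar z = 0$. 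The fiberwise bimodule actions then assemble into two actions of the relevant Weyl-type bundles, whose $D$-flat sections yield $\mathcal{O}_{\operatorname{qu}}^{(k)}$ and $\overline{\mathcal{O}}_{\operatorname{qu}}^{(k)}$. Descending these to the sheaf of smooth sections requires verifying that the actions preserve $\ker D$, which follows from the fact that the Fedosov connection is a derivation of both star products and both module actions once the sign conventions match those of the local model.

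The main obstacle will be ensuring that the left and right actions still commute globally after the Fedosov gluing, and that the formal actions extend to the non-formal subsheaves $\mathcal{O}_{\operatorname{qu}}^{(k)}$ and $\overline{\mathcal{O}}_{\operatorname{qu}}^{(k)}$ acting on genuine smooth sections of $L^{\otimes 2k}$. Commutativity reduces, via the standard Fedosov machinery, to the fiberwise commutativity already verified in the local model, together with the observation that the Fedosov $1$-forms for the two actions involve only $z$-derivatives (for the left) or $\bar z$-derivatives (for the right) of the symbols, so their cross-commutator vanishes identically. The non-formal extension follows by repeating, symmetrically for the right action, the analytic convergence estimates of \cite{ChaLeuLi2023}. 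Compatibility with the natural pairing is then essentially automatic: the local inclusion of tensor-product Fock modules globalizes, by functoriality of Fedosov flat sections, to a bimodule morphism $\mathcal{L}^{(k)} \otimes \overline{\mathcal{L}}^{(-k)} \to \mathcal{L}_{\operatorname{sm}}^{(2k)}$ that agrees with the pointwise pairing of holomorphic and antiholomorphic sections.
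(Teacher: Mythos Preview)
Your proposal is correct and follows essentially the same approach as the paper: one combines the two Fedosov $1$-forms into a single flat connection $D_k^L = (\nabla + \tfrac{k}{\sqrt{-1}}\gamma_k\circledast_k^+ - \tfrac{k}{\sqrt{-1}}\check{\gamma}_k\check{\circledast}_k^-)\otimes\operatorname{Id} + \operatorname{Id}\otimes\nabla^{L^{\otimes 2k}}$ on $\mathcal{W}_{\operatorname{cl}}\otimes L^{\otimes 2k}$, identifies its flat sections with smooth sections of $L^{\otimes 2k}$ via a cochain isomorphism to $-\delta$, and uses the fiberwise commutation $a\circledast^+(b\,\check{\circledast}^-\,s) = (-1)^{|a||b|}\,b\,\check{\circledast}^-(a\circledast^+ s)$ (exactly your ``cross-commutator vanishes'' observation) to obtain the bimodule structure and the pairing compatibility. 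One small correction worth noting: the non-formal extension in \cite{ChaLeuLi2023} and here does not rely on analytic convergence estimates, but on the purely algebraic fact that the Fedosov datum $\gamma$ lies in $\Omega^1(M,\underline{\mathcal{W}})$ with $\underline{\mathcal{W}} = \mathcal{W}_{\operatorname{cl}}^{1,0}\otimes\operatorname{Sym}\,T^{\vee(0,1)}M[\hbar]$ polynomial in $\hbar$, so the evaluation $\hbar = \tfrac{\sqrt{-1}}{k}$ is trivially well-defined.
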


Our main result can be interpreted as realizing the $\operatorname{Hom}(\mathcal{B}_{\operatorname{cc}}, \mathcal{B}_{\operatorname{cc}})$-$\operatorname{Hom}(\overline{\mathcal{B}}_{\operatorname{cc}}, \overline{\mathcal{B}}_{\operatorname{cc}})$ bimodule structure on the morphism space $\operatorname{Hom}(\overline{\mathcal{B}}_{\operatorname{cc}}, \mathcal{B}_{\operatorname{cc}})$ for the brane-conjugate brane system $(\overline{\mathcal{B}}_{\operatorname{cc}}, \mathcal{B}_{\operatorname{cc}})$ on a complexification $(X, \Omega)$ of $(M, \omega)$. Here, $\mathcal{B}_{\operatorname{cc}}$ and $\overline{\mathcal{B}}_{\operatorname{cc}}$ are important examples of \emph{coisotropic A-branes} on $(X, \omega_X)$, where $\omega_X := \operatorname{Im} \Omega$.\par
Coisotropic A-branes were discovered by Kapustin-Orlov  \cite{KapOrl2003} as A-model boundary conditions of pseudoholomorphic maps from a string world-sheet to $X$. They argued that these objects should be in an enlargement of the Fukaya category of $(X, \omega_X)$ for the sake of mirror symmetry. While general morphism spaces and compositions among them remain mysterious in both physics and mathematics, \emph{brane quantization}, the quantization paradigm developed by Gukov-Witten in \cite{GukWit2009}, makes the study on them a fascinating subject. Attempts to describe these morphism spaces mathematically have been emerging in the literatures, e.g. \cite{AldZas2005, BisGua2022, GukKorNawPeiSab2023, KapLi2005, KapOrl2004, LeuYau2023, Qin2020, Qiu2023, Qui2012}.\par
A beautiful idea in brane quantization is the recovery of geometric quantization of $(M, \omega)$ and holomorphic deformation quantization of $(X, \Omega)$ by the morphism spaces $\operatorname{Hom}(\mathcal{B}, \mathcal{B}_{\operatorname{cc}})$ and $\operatorname{Hom}(\mathcal{B}_{\operatorname{cc}}, \mathcal{B}_{\operatorname{cc}})$ respectively, where $\mathcal{B}$ is a Lagrangian A-brane supported on $M$ and $\mathcal{B}_{\operatorname{cc}}$ is a canonical coisotropic A-brane. Chan-Leung-Li's work \cite{ChaLeuLi2023} can be interpreted as realizing the left action of $\operatorname{Hom}(\mathcal{B}_{\operatorname{cc}}, \mathcal{B}_{\operatorname{cc}})$ on $\operatorname{Hom}(\mathcal{B}, \mathcal{B}_{\operatorname{cc}})$, as long as one can extend the involved deformation quantization of $(M, \omega)$ to a holomorphic deformation quantization of $(X, \Omega)$. We will show in Section \ref{Section 3} that such an extension always exists on a neighbourhood of $M$ in $X$. The precise statement is as follows.

\begin{theorem}
	\label{Theorem 1.2}
	Every symplectic manifold $(M, \omega)$ admits a complexification $\iota: (M, \omega) \hookrightarrow (X, \Omega)$ which satisfies the following condition. For any deformation quantization $(\mathcal{C}^\infty(M, \mathbb{C})[[\hbar]], \star)$ of $(M, \omega)$, there exists a holomorphic deformation quantization $(\mathcal{A}, \psi)$ of $(X, \Omega)$ and a morphism of sheaves of $\mathbb{C}[[\hbar]]$-algebras $\tau: \mathcal{A} \to \iota_*( \mathcal{C}_M^\infty[[\hbar]], \star)$, such that the following diagram is commutative:
	\begin{center}
		\begin{tikzcd}
			\mathcal{A} \ar[r, "\tau"] \ar[d] & \iota_*\mathcal{C}_M^\infty[[\hbar]] \ar[d]\\
			\mathcal{O}_X \ar[r, "\iota^*"'] & \iota_*\mathcal{C}_M^\infty
		\end{tikzcd}
	\end{center}
	where $\mathcal{A} \to \mathcal{O}_X$ is the composition of $\psi: \mathcal{A}/\hbar \mathcal{A} \to \mathcal{O}_X$ with the canonical projection $\mathcal{A} \to \mathcal{A}/\hbar \mathcal{A}$, and $\iota_*\mathcal{C}_M^\infty[[\hbar]] \to \iota_*\mathcal{C}_M^\infty$ is the pushforward of the canonical projection $\mathcal{C}_M^\infty[[\hbar]] \to \mathcal{C}_M^\infty$ by $\iota$.
\end{theorem}

Note that Theorem \ref{Theorem 1.2} holds for \emph{arbitrary} symplectic manifolds. When $(M, \omega)$ is real analytic K\"ahler and $(\mathcal{C}^\infty(M, \mathbb{C})[[\hbar]], \star)$ is given by the modified Chan-Leung-Li's construction, we can choose $(X, \Omega)$ to admit a pair of holomorphic polarizations $\mathcal{P}^{\operatorname{c}}, \check{\mathcal{P}}^{\operatorname{c}}$ (in the sense of Definition \ref{Definition 4.4}) which are holomorphic extensions of $T^{1, 0}M$ and $T^{0, 1}M$ respectively, and choose $\mathcal{A}$ to be of the form $(\mathcal{O}_X[[\hbar]], \widetilde{\star})$, where $\widetilde{\star}$ is a holomorphic star product which is `with separation of variables' with respect to the decomposition $T^{1, 0}X = \mathcal{P}^{\operatorname{c}} \oplus \check{\mathcal{P}}^{\operatorname{c}}$. Details can be found in Subsection \ref{Subsection 5.2}.\par
An intriguing by-product of brane quantization is the morphism space $\operatorname{Hom}(\overline{\mathcal{B}}_{\operatorname{cc}}, \mathcal{B}_{\operatorname{cc}})$, which was studied in \cite{AldZas2005, EtiFreKaz2021, GaiWit2022, LeuYau2023}, as it is related to the analytic geometric Langlands program. In \cite{LeuYau2023}, the author of this paper and Leung proposed a definition of $\operatorname{Hom}(\overline{\mathcal{B}}_{\operatorname{cc}}, \mathcal{B}_{\operatorname{cc}})$ when $X$ is compact, due to an $\operatorname{Sp}(1)$-symmetry in hyperK\"ahler geometry. The main result of this paper, however, not only provides a mathematical description of the space $\operatorname{Hom}(\overline{\mathcal{B}}_{\operatorname{cc}}, \mathcal{B}_{\operatorname{cc}})$ itself in a generally non-compact case, but also verifies actions on it due to the categorical structure of coisotropic A-branes.\par
Indeed, the above realizations only apply to a sufficiently small complexification $(X, \Omega)$ of $(M, \omega)$ in general. Thus, we only focus on local models of the data $(X, \Omega)$, $\mathcal{B}$, $\mathcal{B}_{\operatorname{cc}}$ and $\overline{\mathcal{B}}_{\operatorname{cc}}$ near $M$ for brane quantization. We will also verify in Section \ref{Section 4} that real analyticity of $M$ and its prequantum line bundle $L$ implies the uniqueness of such a local model. More precisely, we call the quadruple $(X, \Omega, \iota, \widehat{L})$ a \emph{brane extension} of $(M, \omega, L)$, where $\widehat{L}$ is the Chan-Paton bundle of $\mathcal{B}_{\operatorname{cc}}$, and we have the following result.

\begin{theorem}
	\label{Theorem 1.3}
	Let $L$ be a real analytic prequantum line bundle of $(M, \omega)$. Then $(M, \omega, L)$ admits a brane extension $(X, \Omega, \iota, \widehat{L})$. Moreover, if $(X', \Omega', \iota', \widehat{L}')$ is another brane extension of $(M, \omega, L)$, then there exists a diffeomorphism $\phi: U \to U'$ from a neighbourhood $U$ of $\iota(M)$ in $X$ onto a neighbourhood $U'$ of $\iota'(M)$ in $X'$ and an isomorphism
	\begin{equation*}
		\Phi: \widehat{L} \vert_U \to \phi^*(\widehat{L}' \vert_{U'})
	\end{equation*}
	of Hermitian line bundles with unitary connections over $U$ such that $\phi \circ \iota = \iota'$, $\phi^*(\Omega' \vert_{U'}) = \Omega \vert_U$ and $\Phi$ restricts to the identity map $\operatorname{Id}_L$.
\end{theorem}

For the local model, our work suggests a correspondence between certain morphism spaces involving coisotropic A-branes $\mathcal{B}, \mathcal{B}_{\operatorname{cc}}, \overline{\mathcal{B}}_{\operatorname{cc}}$ and sheaves on $M$ with algebraic structures which give the corresponding mathematical realizations. For the reader's convenience, we list out this correspondence in the following table.

\begin{center}
	\begin{tabular}{|c|c|c|c|}
		\hline
			Morphism space & Sheaf & Algebraic structure & Description of sections of the sheaf\\
		\hline
			$\operatorname{Hom}(\mathcal{B}_{\operatorname{cc}}, \mathcal{B}_{\operatorname{cc}})$ & $\mathcal{O}_{\operatorname{qu}}^{(k)}$ & Algebra & Quantizable functions of level $k$ on $(M, \omega)$\\
			$\operatorname{Hom}(\overline{\mathcal{B}}_{\operatorname{cc}}, \overline{\mathcal{B}}_{\operatorname{cc}})$ & $\overline{\mathcal{O}}_{\operatorname{qu}}^{(k)}$ & Algebra & Quantizable functions of level $k$ on $(\overline{M}, -\omega)$\\
			$\operatorname{Hom}(\mathcal{B}, \mathcal{B}_{\operatorname{cc}})$ & $\mathcal{L}^{(k)}$ & Left module & Holomorphic sections of $L^{\otimes k} \otimes \sqrt{K}$\\
			$\operatorname{Hom}(\mathcal{B}, \overline{\mathcal{B}}_{\operatorname{cc}})$ &  $\overline{\mathcal{L}}^{(k)}$ & Left module & Antiholomorphic sections of $(L^\vee)^{\otimes k} \otimes \sqrt{K}^\vee$\\
			$\operatorname{Hom}(\mathcal{B}_{\operatorname{cc}}, \mathcal{B})$ & $\mathcal{L}^{(-k)}$ & Right module & Holomorphic sections of $(L^\vee)^{\otimes k} \otimes \sqrt{K}$\\
			$\operatorname{Hom}(\overline{\mathcal{B}}_{\operatorname{cc}}, \mathcal{B})$ & $\overline{\mathcal{L}}^{(-k)}$ & Right module & Antiholomorphic sections of $L^{\otimes k} \otimes \sqrt{K}^\vee$\\
			$\operatorname{Hom}(\overline{\mathcal{B}}_{\operatorname{cc}}, \mathcal{B}_{\operatorname{cc}})$ & $\mathcal{L}_{\operatorname{sm}}^{(2k)}$ & Bimodule & Smooth sections of $L^{\otimes 2k}$\\
		\hline
	\end{tabular}
\end{center}

As a final remark, bimodules over deformation quantizations were studied in \cite{BurWal2004, NeuWal2003, Wal2002}, etc., so as to understand Morita equivalence of star products. However, the $\mathcal{O}_{\operatorname{qu}}^{(k)}$-$\overline{\mathcal{O}}_{\operatorname{qu}}^{(k)}$ bimodule $\mathcal{L}_{\operatorname{sm}}^{(2k)}$ constructed in this paper does not describe a Morita equivalence bimodule in the context of deformation quantization. See Subsection \ref{Subsection 5.4} and Remark \ref{Remark 5.24} for more details.\par
The proof of Theorem \ref{Theorem 1.1} relies on Fedosov type gluing arguments. Our construction of the desired bimodule structure on $\mathcal{L}_{\operatorname{sm}}^{(k)}$ is based on Chan-Leung-Li's construction \cite{ChaLeuLi2023} of the left module structure on $\mathcal{L}^{(k)}$. We hope that, in the future, techniques involved in our proof will be applied to study other A-model morphism spaces, especially $\operatorname{Hom}(\mathcal{B}, \mathcal{B}_{\operatorname{cc}})$ when $\omega = \operatorname{Re} \Omega \vert_M$ is degenerate.\par
The paper is organized as follows. Section \ref{Section 2} is an exposition of the physical background behind the (mathematically conjectural) category of coisotropic A-branes. In Section \ref{Section 3}, we will examine the relationship between deformation quantizations of a general symplectic manifold $(M, \omega)$ and holomorphic deformation quantizations of its complexifications $(X, \Omega)$, and complete the proof of Theorem \ref{Theorem 1.2}. In Section \ref{Section 4}, we will examine the relationship between geometric quantization of $(M, \omega)$ and local models of the data for brane quantization near $M$, and we will prove Theorem \ref{Theorem 1.3}. Finally, in Section \ref{Section 5}, we will consider the case when $(M, \omega)$ is K\"ahler, construct sheaves of algebras and (bi)modules appeared in the table above, and prove our main result (Theorem \ref{Theorem 1.1}).

\section{Coisotropic A-branes and brane quantization}
\label{Section 2}
This section is devoted to the physical background motivating our work. Readers who are only interested in the mathematical content of this paper can skip over this section. In Subsection \ref{Subsection 2.1}, we will give a mathematical definition of coisotropic A-branes. In the rest of the subsections, we will review physical expectations on certain A-model morphism spaces in the literature, including $\operatorname{Hom}(\mathcal{B}_{\operatorname{cc}}, \mathcal{B}_{\operatorname{cc}})$, $\operatorname{Hom}(\mathcal{B}, \mathcal{B}_{\operatorname{cc}})$, $\operatorname{Hom}(\mathcal{B}_{\operatorname{cc}}, \mathcal{B})$ and $\operatorname{Hom}(\overline{\mathcal{B}}_{\operatorname{cc}}, \mathcal{B}_{\operatorname{cc}})$. These expectations mostly originate from Gukov-Witten brane quantization \cite{GukWit2009}. They are guiding principles for construction of sheaves of algebras and their (bi)modules in Section \ref{Section 5} so as to realize the above morphism spaces.

\subsection{Coisotropic A-branes}
\label{Subsection 2.1}
\quad\par
Recall that for a coisotropic submanifold $S$ of a symplectic manifold $(X, \omega_X)$, $\omega_X \vert_S$ descends to a section $\tilde{\omega}_X \in \Gamma(S, \bigwedge^2 \mathcal{F}^*S)$, where $\mathcal{F}S := TS/\mathcal{L}S$ and $\mathcal{L}S$ is the characteristic foliation of $S$.

\begin{definition}
	A \emph{coisotropic A-brane} (or \emph{A-brane} in short) on a symplectic manifold $(X, \omega_X)$ is a pair $\mathcal{B} = (S, L)$, where $S$ is a coisotropic submanifold of $X$ and $L$ is a Hermitian line bundle over $S$ with a unitary connection $\nabla$ of curvature $F_L$ such that
	\begin{enumerate}
		\item $F := \sqrt{-1} F_L \in \Omega^2(S)$ descends to a smooth section $\tilde{F} \in \Gamma(S, \bigwedge^2 \mathcal{F}^*S)$; and
		\item the composition $I := \tilde{\omega}_X^{-1} \circ \tilde{F}: \mathcal{F}S \to \mathcal{F}S$ defines an integrable almost complex structure on $\mathcal{F}S$.
	\end{enumerate}
	In this case, we call $S$ the \emph{support} of $\mathcal{B}$ and $L$ the \emph{Chan-Paton bundle} of $\mathcal{B}$. 
\end{definition}

There are two extreme cases for A-branes. An A-brane $\mathcal{B} = (S, L)$ on $(X, \omega_X)$ is said to be a \emph{Lagrangian A-brane} if $S$ is $\omega_X$-Lagrangian, in which case $L$ is flat; it is said to be \emph{space-filling} if $S = X$, in which case $X$ admits a holomorphic symplectic structure $F + \sqrt{-1}\omega_X$.\par
If $X$ is equipped with a holomorphic symplectic structure $\Omega$ such that $\omega_X = \operatorname{Im} \Omega$, then we call a space-filling A-brane on $(X, \omega_X)$ with Chan-Paton bundle of curvature $\tfrac{1}{\sqrt{-1}} \operatorname{Re} \Omega$ a \emph{canonical coisotropic A-brane} on $(X, \Omega)$ and denote such an A-brane by $\mathcal{B}_{\operatorname{cc}}$. It is unique up to a tensor product with a Hermitian line bundle equipped with a flat unitary connection.\par
Whenever $\mathcal{B}_{\operatorname{cc}} = (X, \widehat{L})$ exists, there is another space-filling A-brane $\overline{\mathcal{B}}_{\operatorname{cc}} = (X, \widehat{L}^\vee)$ on $(X, \omega_X)$. It is called the \emph{conjugate brane} of $\mathcal{B}_{\operatorname{cc}}$, and is indeed a canonical coisotropic A-brane on $(\overline{X}, -\overline{\Omega})$, where $\overline{X}$ is the conjugate complex manifold of $X$. 

\subsection{$\operatorname{Hom}(\mathcal{B}_{\operatorname{cc}}, \mathcal{B}_{\operatorname{cc}})$ and holomorphic deformation quantization of $(X, \Omega)$}
\label{Subsection 2.2}
\quad\par
The notion of a canonical coisotropic A-brane was first introduced by Kapustin-Witten \cite{KapWit2007} when they studied Hitchin moduli spaces in the Geometric Langlands Program. Here, we are concerned about the endomorphism algebra of a canonical coisotropic A-brane $\mathcal{B}_{\operatorname{cc}} = (X, \widehat{L})$ on a holomorphic symplectic manifold $(X, \Omega)$. Indeed, the endomorphism algebra for an arbitrary coisotropic A-brane has been determined in the classical limit in \cite{KapLi2005, KapOrl2003}, regarded as the BRST cohomology of the state space for topological open strings with both endpoints on the same A-brane. In particular, the classical limit of $\operatorname{Hom}(\mathcal{B}_{\operatorname{cc}}, \mathcal{B}_{\operatorname{cc}})$ is given by $H^*(X, \mathcal{O}_X)$, where $\mathcal{O}_X$ is the sheaf of holomorphic functions on $X$.\par
Later in \cite{GukWit2009}, Gukov-Witten considered it as the non-commutative algebra of quantum boundary observables for $(\mathcal{B}_{\operatorname{cc}}, \mathcal{B}_{\operatorname{cc}})$-strings, and said that it should be a holomorphic deformation quantization of $(X, \Omega)$. In recent years, Gaiotto-Witten \cite{GaiWit2022} have adopted a sheaf-theoretic approach to re-examine this algebra. Following this approach, we view $\operatorname{Hom}(\mathcal{B}_{\operatorname{cc}}, \mathcal{B}_{\operatorname{cc}})$ as a sheaf of algebras on $(X, \Omega)$.\par
Two problems arise from Gukov-Witten's suggestion. The first problem is that deformation quantization is formal, while a priori $\operatorname{Hom}(\mathcal{B}_{\operatorname{cc}}, \mathcal{B}_{\operatorname{cc}})$ is not. It is generally impossible to yield a non-formal non-commutative product on holomorphic functions on $X$ by the evaluation of the formal parameter $\hbar$ in holomorphic deformation quantization to a complex number. We will only aim at a dense subspace of holomorphic functions on $X$ equipped with such a non-formal product.\par
Indeed, a non-formal parameter $k$ can be introduced to A-models. For $k \in \mathbb{Z}^+$, $\mathcal{B}_{\operatorname{cc}}^{(k)} = (X, \widehat{L}^{\otimes k})$ is a canonical coisotropic A-brane on $(X, k\Omega)$. Our convention on $\hbar$ is that a corresponding non-formal product on a desired subspace of holomorphic functions is obtained by setting $\hbar = \tfrac{\sqrt{-1}}{k}$. For simplicity, we will focus on the case when $k = 1$ in this section.\par
The second problem is that there are different holomorphic deformation quantizations of $(X, \Omega)$, even up to equivalence. One way to identify an appropriate candidate is to study its representations. This will be discussed immediately in the next subsection.

\subsection{$\operatorname{Hom}(\mathcal{B}, \mathcal{B}_{\operatorname{cc}})$ and geometric quantization of $(M, \omega)$}
\label{Subsection 2.3}
\quad\par
Let $\omega_X = \operatorname{Im} \Omega$. Now we assume that $(X, \omega_X)$ admits a Lagrangian A-brane $\mathcal{B} = (M, L_0)$ such that $L_0$ is trivial, $M$ is spin and $\omega := \operatorname{Re} \Omega \vert_M$ is a symplectic form. The composition
\begin{equation}
	\label{Equation 2.1}
	\operatorname{Hom}(\mathcal{B}_{\operatorname{cc}}, \mathcal{B}_{\operatorname{cc}}) \otimes \operatorname{Hom}(\mathcal{B}, \mathcal{B}_{\operatorname{cc}}) \to \operatorname{Hom}(\mathcal{B}, \mathcal{B}_{\operatorname{cc}}).
\end{equation}
indicates a left $\operatorname{Hom}(\mathcal{B}_{\operatorname{cc}}, \mathcal{B}_{\operatorname{cc}})$-module structure on $\operatorname{Hom}(\mathcal{B}, \mathcal{B}_{\operatorname{cc}})$.\par
Not much is known about this representation, except when either $M$ is compact and $(X, \Omega)$ admits a hyperK\"ahler metric \cite{GukWit2009}, or $(M, \omega)$ is compact K\"ahler and $X$ is equipped with a holomorphic Lagrangian fibration onto $M$ so that all fibres are contractible \cite{GaiWit2022}. In both cases, it is suggested that $\operatorname{Hom}(\mathcal{B}, \mathcal{B}_{\operatorname{cc}})$ should be isomorphic to $H^0(M, L \otimes \sqrt{K})$ \footnote{As mentioned in \cite{GaiWit2022, GukWit2009}, it is not expected that A-model morphism spaces are in general Hilbert spaces. The A-model interpretation of the inner product on the quantum Hilbert space on $(M, \omega)$ requires extra structures on $(X, \Omega)$, but this is out of the scope of this paper.}, where $L := \widehat{L} \vert_M$ and $\sqrt{K}$ is a chosen square root of the canonical bundle of $M$. In the latter case, it is also suggested that $\operatorname{Hom}(\mathcal{B}_{\operatorname{cc}}, \mathcal{B}_{\operatorname{cc}})$ should be realized as the sheaf $\mathcal{D}_{L \otimes \sqrt{K}}$ of holomorphic differential operators of $L \otimes \sqrt{K}$. Observations on such a relation between A-branes and (twisted) $\mathcal{D}$-modules from examples date back to \cite{GukWit2009, KapWit2007}.\par
Therefore, if we take the sheaf-theoretic approach, then $\operatorname{Hom}(\mathcal{B}, \mathcal{B}_{\operatorname{cc}})$ should be realized as a sheaf on $X$ which restricts to the sheaf of holomorphic sections of $L \otimes \sqrt{K}$ such that the sheaf cohomologies for these two sheaves are isomorphic.

\begin{remark}
	Geometric quantization of $(M, \omega)$ and the A-model on $(X, \omega_X)$ are compatible under scaling by the non-formal parameter $k$, i.e. for any $k \in \mathbb{Z}^+$, $\mathcal{B}^{(k)} = (M, L_0^{\otimes k})$ and $\mathcal{B}_{\operatorname{cc}}^{(k)} = (X, \widehat{L}^{\otimes k})$ are A-branes on $(X, k\omega_X)$ and $L^{\otimes k} = \widehat{L}^{\otimes k} \vert_M$ is a prequantum line bundle of $(M, k\omega)$.
\end{remark}

The above suggestion is in agreement with the recent works of Chan-Leung-Li \cite{ChaLeuLi2023} and Bischoff-Gualtieri \cite{BisGua2022} from different aspects. First, the sheaf $\mathcal{O}_{\operatorname{qu}}^{(1)}$ of quantizable functions (of level $1$) on $M$ as mentioned in Section \ref{Section 1} is isomorphic to $\mathcal{D}_{L \otimes \sqrt{K}}$. Strictly speaking, we need an argument showing that $\mathcal{O}_{\operatorname{qu}}^{(1)}$ is the restriction of a subsheaf of a holomorphic deformation quantization of $(X, \Omega)$ after the evaluation at $\frac{\hbar}{\sqrt{-1}} = 1$. In Sections \ref{Section 3} and \ref{Section 5}, we will discuss this issue in detail.\par
Second, Bischoff-Gualtieri's idea is that one should take a holomorphic fibration of $X$ and define $\operatorname{Hom}(\mathcal{B}, \mathcal{B}_{\operatorname{cc}})$ as the direct sum, over the set of Bohr-Sommerfeld fibres intersecting $M$, of covariantly constant holomorphic sections of a canonical modification of $\widehat{L}$ restricted to the corresponding Bohr-Sommerfeld fibres. We will have a further discussion on it in Section \ref{Section 4}, where we will show that, assuming real analyticity of $M$ and $L$, the local model near $M$ for brane quantization is unique.

\subsection{$\operatorname{Hom}(\mathcal{B}_{\operatorname{cc}}, \mathcal{B})$ as the dual of $\operatorname{Hom}(\mathcal{B}, \mathcal{B}_{\operatorname{cc}})$}
\label{Subsection 2.4}
\quad\par
A physical consideration, recalled by Gaiotto-Witten in Section 2.7 in \cite{GaiWit2022} for instance, suggests that for any A-branes $\mathcal{B}_1, \mathcal{B}_2$ on $(X, \omega_X)$, no matter whether they are Lagrangian or not, 
there should be a non-degenerate pairing between $\operatorname{Hom}(\mathcal{B}_1, \mathcal{B}_2)$ and $\operatorname{Hom}(\mathcal{B}_2, \mathcal{B}_1)$ under appropriate compactness assumptions on $\mathcal{B}_1, \mathcal{B}_2$ or $X$. This feature is known as a Calabi-Yau ($A_\infty$-)structure in mathematics.\par
In particular, $\operatorname{Hom}(\mathcal{B}_{\operatorname{cc}}, \mathcal{B})$ should be the dual of $\operatorname{Hom}(\mathcal{B}, \mathcal{B}_{\operatorname{cc}})$ when $(M, \omega)$ is compact K\"ahler. Due to Serre duality, the space $\operatorname{Hom}(\mathcal{B}_{\operatorname{cc}}, \mathcal{B})$ should be $H^*(M, L^\vee \otimes \sqrt{K})$. The following composition
\begin{equation}
	\label{Equation 2.2}
	\operatorname{Hom}(\mathcal{B}_{\operatorname{cc}}, \mathcal{B}) \otimes \operatorname{Hom}(\mathcal{B}_{\operatorname{cc}}, \mathcal{B}_{\operatorname{cc}}) \to \operatorname{Hom}(\mathcal{B}_{\operatorname{cc}}, \mathcal{B})
\end{equation}
indicates that $\operatorname{Hom}(\mathcal{B}_{\operatorname{cc}}, \mathcal{B})$ is a right $\operatorname{Hom}(\mathcal{B}_{\operatorname{cc}}, \mathcal{B}_{\operatorname{cc}})$-module. This is consistent with the basic theory of twisted $\mathcal{D}$-modules, as the opposite algebra of $\mathcal{D}_{L \otimes \sqrt{K}}$ is isomorphic to the sheaf of holomorphic differential operators of $L^\vee \otimes \sqrt{K}$.\par
Taking the sheaf-theoretic approach again, we should view $\operatorname{Hom}(\mathcal{B}_{\operatorname{cc}}, \mathcal{B})$ as a sheaf on $X$ which restricts to the sheaf of holomorphic sections of $L^\vee \otimes \sqrt{K}$ such that the sheaf cohomologies for these two sheaves are isomorphic.

\subsection{The brane-conjugate brane morphism space $\operatorname{Hom}(\overline{\mathcal{B}}_{\operatorname{cc}}, \mathcal{B}_{\operatorname{cc}})$}
\label{Subsection 2.5}
\quad\par
For the conjugate brane $\overline{\mathcal{B}}_{\operatorname{cc}} = (X, \widehat{L}^\vee)$, we should similarly view $\operatorname{Hom}(\overline{\mathcal{B}}_{\operatorname{cc}}, \overline{\mathcal{B}}_{\operatorname{cc}})$ as a holomorphic deformation quantization of $(\overline{X}, -\overline{\Omega})$ restricting to a deformation quantization of $(M, -\omega)$.\par
In Section 5 in \cite{GaiWit2022}, Gaiotto-Witten gave a physical argument that $\operatorname{Hom}(\overline{\mathcal{B}}_{\operatorname{cc}}, \mathcal{B}_{\operatorname{cc}})$ should be given by the underlying vector space of the quantum Hilbert space of $(X, 2\operatorname{Re}\Omega)$ with prequantum line bundle $\widehat{L}^{\otimes 2}$. A holomorphic Lagrangian fibration $\pi: X \to M$ as before gives a real polarization on $(X, 2\operatorname{Re}\Omega)$. In this case, we can view $\operatorname{Hom}(\overline{\mathcal{B}}_{\operatorname{cc}}, \mathcal{B}_{\operatorname{cc}})$ as the sheaf of smooth sections of $\widehat{L}^{\otimes 2}$ covariantly constant along the fibres of $\pi$, restricting to the sheaf of smooth sections of $L^{\otimes 2}$.\par
The compositions
\begin{align}
	\label{Equation 2.3}
	\operatorname{Hom}(\mathcal{B}_{\operatorname{cc}}, \mathcal{B}_{\operatorname{cc}}) \otimes \operatorname{Hom}(\overline{\mathcal{B}}_{\operatorname{cc}}, \mathcal{B}_{\operatorname{cc}}) & \to \operatorname{Hom}(\overline{\mathcal{B}}_{\operatorname{cc}}, \mathcal{B}_{\operatorname{cc}}),\\
	\label{Equation 2.4}
	\operatorname{Hom}(\overline{\mathcal{B}}_{\operatorname{cc}}, \mathcal{B}_{\operatorname{cc}}) \otimes \operatorname{Hom}(\overline{\mathcal{B}}_{\operatorname{cc}}, \overline{\mathcal{B}}_{\operatorname{cc}}) & \to \operatorname{Hom}(\overline{\mathcal{B}}_{\operatorname{cc}}, \mathcal{B}_{\operatorname{cc}}).
\end{align}
indicate that $\operatorname{Hom}(\overline{\mathcal{B}}_{\operatorname{cc}}, \mathcal{B}_{\operatorname{cc}})$ is a $\operatorname{Hom}(\mathcal{B}_{\operatorname{cc}}, \mathcal{B}_{\operatorname{cc}})$-$\operatorname{Hom}(\overline{\mathcal{B}}_{\operatorname{cc}}, \overline{\mathcal{B}}_{\operatorname{cc}})$ bimodule. Moreover, when we have the Lagrangian A-brane $\mathcal{B}$ given as in Subsection \ref{Subsection 2.3}, this bimodule structure should be compatible with the module structures on $\operatorname{Hom}( \mathcal{B}, \mathcal{B}_{\operatorname{cc}} )$ and $\operatorname{Hom}( \overline{\mathcal{B}}_{\operatorname{cc}}, \mathcal{B} )$. More precisely, the following two diagrams should be commutative due to the associative property of compositions:
\begin{center}
	\begin{tikzcd}
		\operatorname{Hom}( \mathcal{B}_{\operatorname{cc}}, \mathcal{B}_{\operatorname{cc}} ) \otimes \operatorname{Hom}( \mathcal{B}, \mathcal{B}_{\operatorname{cc}} ) \otimes \operatorname{Hom}( \overline{\mathcal{B}}_{\operatorname{cc}}, \mathcal{B} ) \ar[r] \ar[d] & \operatorname{Hom}( \mathcal{B}_{\operatorname{cc}}, \mathcal{B}_{\operatorname{cc}} ) \otimes \operatorname{Hom}( \overline{\mathcal{B}}_{\operatorname{cc}}, \mathcal{B}_{\operatorname{cc}} ) \ar[d]\\
		\operatorname{Hom}( \mathcal{B}, \mathcal{B}_{\operatorname{cc}} ) \otimes \operatorname{Hom}( \overline{\mathcal{B}}_{\operatorname{cc}}, \mathcal{B} ) \ar[r] & \operatorname{Hom}( \overline{\mathcal{B}}_{\operatorname{cc}}, \mathcal{B}_{\operatorname{cc}} )
	\end{tikzcd}
\end{center}
and 
\begin{center}
	\begin{tikzcd}
		\operatorname{Hom}( \mathcal{B}, \mathcal{B}_{\operatorname{cc}} ) \otimes \operatorname{Hom}( \overline{\mathcal{B}}_{\operatorname{cc}}, \mathcal{B} ) \otimes \operatorname{Hom}( \overline{\mathcal{B}}_{\operatorname{cc}}, \overline{\mathcal{B}}_{\operatorname{cc}} ) \ar[r] \ar[d] & \operatorname{Hom}( \mathcal{B}, \mathcal{B}_{\operatorname{cc}} ) \otimes \operatorname{Hom}( \overline{\mathcal{B}}_{\operatorname{cc}}, \mathcal{B} ) \ar[d]\\
		\operatorname{Hom}( \overline{\mathcal{B}}_{\operatorname{cc}}, \mathcal{B}_{\operatorname{cc}} ) \otimes \operatorname{Hom}( \overline{\mathcal{B}}_{\operatorname{cc}}, \overline{\mathcal{B}}_{\operatorname{cc}} ) \ar[r] & \operatorname{Hom}( \overline{\mathcal{B}}_{\operatorname{cc}}, \mathcal{B}_{\operatorname{cc}} )
	\end{tikzcd}
\end{center}
When $(M, \omega)$ is compact real analytic K\"ahler and $X$ is shrunk to a sufficiently small neighbourhood of $M$, Theorem \ref{Theorem 1.1} realizes the above physical expectations on $\operatorname{Hom}( \overline{\mathcal{B}}_{\operatorname{cc}}, \mathcal{B}_{\operatorname{cc}} )$.

\section{(Holomorphic) deformation quantizations of $(M, \omega)$ and its complexifications}
\label{Section 3}
We will start with a precise definition of a complexification $(X, \Omega)$ of a symplectic manifold $(M, \omega)$ in Subsection \ref{Subsection 3.1}. Then in Subsection \ref{Subsection 3.2}, we will show that every equivalence class of deformation quantizations of $(M, \omega)$ has a real analytic representative. In Subsection \ref{Subsection 3.3}, we will discuss holomorphic deformation quantizations of a Stein holomorphic symplectic manifold. After these preparations, in Subsection \ref{Subsection 3.4}, we will examine the possibility of recovering a holomorphic deformation quantization of $(X, \Omega)$ from a deformation quantization of $(M, \omega)$ as an extension.

\subsection{Complexifications of a symplectic manifold}
\label{Subsection 3.1}
\quad\par
By Kutzschebauch-Loose's Theorem \cite{KutLoo2000}, any symplectic manifold $(M, \omega)$ possesses a unique (up to isomorphism) real analytic structure such that $\omega$ is real analytic. Throughout the rest of this paper, we always equip $(M, \omega)$ with this canonical real analytic structure.

\begin{definition}
	\label{Definition 3.1}
	A \emph{complexification} \footnote{The notion of complexification in \cite{GukWit2009} is more restrictive than our definition - its requires an antiholomorphic involution $\tau$ of $X$ such that $M$ is the fixed point set of $\tau$ and $\tau^*\Omega = \overline{\Omega}$. However, shrinking a complexification $(X, \Omega)$ of $(M, \omega)$ in our definition if necessary, we can assume $X$ admits such a map $\tau$.} of a symplectic manifold $(M, \omega)$ is a complexification $X$ of the underlying real analytic manifold $M$ together with a holomorphic symplectic form $\Omega$ on $X$ which is a holomorphic extension of $\omega$, i.e. $\Omega \vert_M = \omega$.
\end{definition}

Every symplectic manifold $(M, \omega)$ has a Stein complexification and the germ of complexifications of $(M, \omega)$ is unique (see Appendix \ref{Appendix A}). Shrinking $X$ if necessary, we can always find a symplectic embedding of $(X, \operatorname{Im}\Omega)$ into a neighbourhood of the zero section of the cotangent bundle $T^\vee M$ such that $M$ is identified with the zero section. It implies that we can assume $\operatorname{Im}\Omega = d\alpha$ for some $\alpha \in \Omega^1(X)$ with $\iota^*\alpha = 0$ (hence $[\Omega] = [\operatorname{Re}\Omega]$) and $X$ has a smooth fibration $\pi: X \to M$ such that $\pi \circ \iota = \operatorname{Id}_M$ and the fibres of $\pi$ are $\operatorname{Im} \Omega$-Lagrangian.

\begin{remark}
	Let $(X, \Omega)$ be a holomorphic symplectic manifold. Suppose $X$ has an $\operatorname{Im}\Omega$-Lagrangian submanifold $M$ such that $\omega := \operatorname{Re} \Omega \vert_M$ is non-degenerate. Then $(X, \Omega)$ is a complexification of $(M, \omega)$ if and only if $M$ is a real analytic submanifold of $(X, \Omega)$.
\end{remark}

\begin{example}
	\label{Example 3.3}
	Suppose $(M, \omega)$ is a real analytic K\"ahler manifold, i.e. the K\"ahler form $\omega$ is real analytic with respect to the real analytic structure determined by the compatible complex structure on $M$. There are several constructions of a complexification $(X, \Omega)$ of $(M, \omega)$ as follows.
	\begin{enumerate}
		\item Constructing $\Omega$ by a holomorphic extension of $\omega$ on a neighbourhood $X$ of the diagonal in the complex manifold $M \times \overline{M}$, where $\overline{M}$ is the conjugate complex manifold of $M$ \cite{BhaBurLupUri2022, Rua1998}.
		\item Constructing $\Omega$ by deforming the natural holomorphic symplectic form on the holomorphic cotangent bundle $X = T^\vee M$ by the pullback of $\omega$ along the projection $X \to M$ \cite{BisGua2022, BisGuaZab2022, Don2002}.
		\item Defining $\Omega = \omega_J + \sqrt{-1}\omega_K$ on a neighbourhood $X$ of the zero section in $T^\vee M$, which admits a hyperK\"ahler structure $(g, I, J, K)$ such that the restriction of the K\"ahler form $\omega_J$ associated to $J$ on the zero section is $\omega$ and the K\"ahler form $\omega_K$ associated to $K$ is the restriction of the natural symplectic form on $T^\vee M$  \cite{Fei2001, Kal1999, VerKal1999}.
	\end{enumerate}
\end{example}

\subsection{Deformation quantizations of a symplectic manifold}
\label{Subsection 3.2}
\quad\par
Consider a symplectic manifold $(M, \omega)$. Let $\{ \quad, \quad \}$ be the Poisson bracket on $\mathcal{C}^\infty(M, \mathbb{C})$ given by $\{f, g\} = \omega(X_g, X_f)$, where $X_f$ is the Hamiltonian vector field of $f$, i.e. $\iota_{X_f}\omega = df$, and let $\hbar$ be an indeterminate over $\mathbb{C}$. A \emph{star product} on $M$ is a $\mathbb{C}[[\hbar]]$-bilinear map
\begin{equation*}
	\star: \mathcal{C}^\infty(M, \mathbb{C})[[\hbar]] \times \mathcal{C}^\infty(M, \mathbb{C})[[\hbar]] \to \mathcal{C}^\infty(M, \mathbb{C})[[\hbar]],
\end{equation*}
such that $(\mathcal{C}^\infty(M, \mathbb{C})[[\hbar]], \star)$ forms a $\mathbb{C}[[\hbar]]$-algebra with unit $1$ and there is a sequence $\{C_r\}_{r=1}^\infty$ of bi-differential operators on $M$ such that for all $f, g \in \mathcal{C}^\infty(M, \mathbb{C})$, $f \star g = fg + \sum_{r=1}^\infty \hbar^r C_r(f, g)$. Since $C_r$'s are all local operators, $\star$ can also be regarded as a $\mathbb{C}[[\hbar]]$-bilinear sheaf morphism $\mathcal{C}_M^\infty[[\hbar]] \times \mathcal{C}_M^\infty[[\hbar]] \to \mathcal{C}_M^\infty[[\hbar]]$, where $\mathcal{C}_M^\infty$ is the sheaf of smooth complex valued functions on $M$. A \emph{deformation quantization} of $(M, \omega)$ is a $\mathbb{C}[[\hbar]]$-algebra of the form $(\mathcal{C}^\infty(M, \mathbb{C})[[\hbar]], \star)$, where $\star$ is a star product on $M$ such that $C_1(f, g) - C_1(g, f) = \{f, g\}$ for all $f, g \in \mathcal{C}^\infty(M, \mathbb{C})$.\par
As mentioned in Subsection \ref{Subsection 3.1}, $M$ has a real analytic structure for which $\omega$ is real analytic. One can thus consider a deformation quantization $(\mathcal{C}^\infty(M, \mathbb{C})[[\hbar]], \star)$ of $(M, \omega)$ which is \emph{real analytic} in the sense that $\star$ restricts to a $\mathbb{C}[[\hbar]]$-bilinear sheaf morphism $\mathcal{C}_M^\omega[[\hbar]] \times \mathcal{C}_M^\omega[[\hbar]] \to \mathcal{C}_M^\omega[[\hbar]]$, where $\mathcal{C}_M^\omega$ is the sheaf of real analytic complex valued functions on $M$. The goal of this subsection is to prove the following theorem.

\begin{theorem}
	\label{Theorem 3.4}
	Every equivalence class of deformation quantizations of a symplectic manifold $(M, \omega)$ has a representative $(\mathcal{C}^\infty(M, \mathbb{C})[[\hbar]], \star)$ which is real analytic.
\end{theorem}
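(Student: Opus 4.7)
The plan is to invoke Fedosov's explicit construction together with his classification theorem for deformation quantizations, which states that equivalence classes of deformation quantizations of $(M, \omega)$ are classified by their characteristic class, an element of an affine space modeled on $H^2(M, \mathbb{C})[[\hbar]]$. Given an equivalence class, it suffices to realize its characteristic class through Fedosov's procedure using real analytic inputs; the resulting star product will then be real analytic, and Fedosov's classification guarantees it is equivalent to any given smooth representative in the class.

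The first step is to exhibit a real analytic torsion-free $\omega$-preserving connection $\nabla$ on $TM$. Locally this is trivial (in any Darboux chart the flat connection is symplectic), and the space of symplectic connections forms an affine bundle modeled on the real analytic vector bundle $S^3 T^*M$, using $\omega$ to identify. The difficulty is global gluing, since $\mathcal{C}_M^\omega$ admits no partitions of unity. To overcome this, I would appeal to Grauert's theorem, which embeds $M$ as a totally real submanifold of a Stein complexification, combined with Cartan's Theorem B applied to the coherent extension of $S^3 T^*M$ to this Stein neighbourhood; the vanishing of the relevant first cohomology kills the gluing obstruction and yields a global real analytic symplectic connection.

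The second step is to choose a real analytic representative $\omega_{\operatorname{Fed}} = \omega + \hbar\omega_1 + \hbar^2\omega_2 + \cdots$ of the given characteristic class. Because both the smooth and the real analytic de Rham complexes resolve the constant sheaf $\underline{\mathbb{C}}$ on $M$ (each via its own Poincar\'e lemma), the inclusion is a quasi-isomorphism, so every de Rham class has a closed real analytic representative. I would then run Fedosov's iterative construction: using $\nabla$, the Koszul operator $\delta$, its homotopy inverse $\delta^{-1}$, and fibrewise Moyal multiplication on the Weyl algebra bundle $\mathcal{W}$, one assembles a flat connection $D$ with prescribed Weyl curvature corresponding to $\omega_{\operatorname{Fed}}$. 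Each of these operations preserves real analyticity of sections of $\mathcal{W}$, so the lift $\widetilde{f}$ of a real analytic function $f$ to a $D$-flat section is real analytic, and hence so is the star product $f \star g := \sigma(\widetilde{f} \cdot \widetilde{g})$, where $\sigma$ is the symbol map. Fedosov's classification theorem then furnishes an equivalence between this real analytic deformation quantization and the original one.

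The main obstacle is the first step. Once a real analytic symplectic connection is in hand, the rest is routine, since Fedosov's construction is entirely functorial in the connection and the characteristic 2-form, and its iterative nature respects real analyticity of inputs. A secondary concern, which should be immediate, is verifying that $\delta^{-1}$ and the symbol map do not disturb real analyticity; both are fibrewise polynomial operations on $\mathcal{W}$, so this reduces to fibrewise algebra.
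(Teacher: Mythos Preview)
Your proposal is correct and follows essentially the same strategy as the paper: realize the given characteristic class by a real analytic closed $2$-form, choose a real analytic symplectic connection, and run Fedosov's iteration, which preserves real analyticity termwise.

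The one place where your argument diverges is the construction of the real analytic symplectic connection, which you flag as ``the main obstacle.'' You propose a cohomological argument via Grauert's Stein embedding and Cartan's Theorem~B to kill the gluing obstruction in $H^1$ of the sheaf of real analytic sections of $S^3 T^\vee M$. This works, but the paper (Proposition~\ref{Proposition C.1}) takes a much shorter route: pick any torsion-free real analytic connection $\nabla^{(0)}$ (for instance the Levi-Civita connection of a real analytic metric), define $N$ by $(\nabla_X^{(0)}\omega)(Y,Z) = \omega(N(X,Y),Z)$, and set $\nabla_X Y = \nabla_X^{(0)} Y + \tfrac{1}{3}(N(X,Y)+N(Y,X))$. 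This is the standard correction formula, it is purely algebraic in $\nabla^{(0)}$ and $\omega$, and hence real analytic whenever the inputs are. So the step you identify as the hardest is in fact immediate once one remembers this formula; your Stein/Cartan~B machinery is not needed here (though it is exactly what the paper deploys in the holomorphic analogue, Proposition~\ref{Proposition C.2}, where no such elementary correction is available).
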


To prove Theorem \ref{Theorem 3.4}, we need to introduce Fedosov quantization \cite{Fed1994} of $(M, \omega)$. It is a geometric construction of a star product on $M$ via its Weyl bundle.\par
The \emph{Weyl bundle} of $(M, \omega)$ is the infinite rank vector bundle $\mathcal{W} = \widehat{\operatorname{Sym}} T^\vee M_\mathbb{C}[[\hbar]]$, where $\widehat{\operatorname{Sym}} T^\vee M_\mathbb{C}$ is the completed symmetric algebra bundle on the complexified cotangent bundle $T^\vee M_\mathbb{C}$. In local real analytic coordinates $(x^1, ..., x^{2n})$, a smooth (resp. real analytic) section of $\mathcal{W}$ is given by a formal power series
\begin{equation*}
	\sum_{r, l \geq 0} \sum_{i_1, ..., i_l \geq 0} \hbar^r a_{r, i_1, ..., i_l} y^{i_1} \cdots y^{i_l},
\end{equation*}
where $a_{r, i_1, ..., i_l}$ are local smooth (resp. real analytic) complex valued functions, $y^i$ denotes the covector $dx^i$ regarded as a section of $\mathcal{W}$, and we suppress the notations of symmetric products in the expression $y^{i_1} \cdots y^{i_l}$. We assign weights on $\mathcal{W}$ by setting the weight of $y^i$ to be $1$ and the weight of $\hbar$ to be $2$. For $r \in \mathbb{N}$, we denote by $\mathcal{W}_{(r)}$ the subbundle of $\mathcal{W}$ of weight at least $r$. We then have a decreasing filtration
\begin{align*}
	\mathcal{W} = \mathcal{W}_{(0)} \supset \mathcal{W}_{(1)} \supset \cdots \supset  \mathcal{W}_{(r)} \supset \cdots.
\end{align*}
There are three $\mathcal{C}^\infty(M, \mathbb{C})[[\hbar]]$-linear operators $\delta, \delta^{-1}, \pi_0$ on $\Omega^*(M, \mathcal{W})$ defined as follows: for a local section $a = y^{i_1} \cdots y^{i_l} dx^{j_1} \wedge \cdots \wedge dx^{j_m}$,
\begin{align*}
	\delta a = dx^k \wedge \frac{\partial a}{\partial y^k}, \quad \delta^{-1} a =
	\begin{cases}
		\frac{1}{l+m} y^k \iota_{\partial_{x^k}} a & \text{ if } l + m > 0;\\
		0 & \text{ if } l + m = 0,
	\end{cases}
	\quad \pi_0 (a) =
	\begin{cases}
		0  & \text{ if } l + m > 0;\\
		a & \text{ if } l + m = 0,
	\end{cases}
\end{align*}
and the equality $\operatorname{Id} - \pi_0 = \delta \circ \delta^{-1} + \delta^{-1} \circ \delta$ holds on $\Omega^*(M, \mathcal{W})$.\par
Write $\omega = \tfrac{1}{2} \omega_{ij} dx^i \wedge dx^j$. There is a $\mathcal{C}^\infty(M, \mathbb{C})[[\hbar]]$-bilinear map
\begin{equation*}
	\star_{\operatorname{M}}: \Omega^*(M, \mathcal{W}) \times \Omega^*(M, \mathcal{W}) \to \Omega^*(M, \mathcal{W})
\end{equation*}
called the \emph{fibrewise Moyal product} and defined by
\begin{equation}
	a \star_{\operatorname{M}} b = \sum_{r=0}^\infty \frac{1}{r!} \left( \frac{\hbar}{2} \right)^r \omega^{i_1j_1} \cdots \omega^{i_rj_r} \frac{\partial^r a}{\partial y^{i_1} \cdots \partial y^{i_r}} \wedge \frac{\partial^r b}{\partial y^{j_1} \cdots \partial y^{j_r}},
\end{equation}
for all $a, b \in \Omega^*(M, \mathcal{W})$, where $(\omega^{ij})$ is the inverse of $(\omega_{ij})$. It preserves the weight filtration of $\mathcal{W}$, i.e. $\mathcal{W}_{(r_1)} \star_{\operatorname{M}} \mathcal{W}_{(r_2)} \subset \mathcal{W}_{(r_1 + r_2)}$. Choose a symplectic connection $\nabla$ on $(M, \omega)$. Since $\nabla \omega = 0$,
\begin{equation*}
	(\Omega^*(M, \mathcal{W}), R, \nabla, \tfrac{1}{\hbar}[\quad, \quad ]_{\star_{\operatorname{M}}})
\end{equation*}
forms a curved dgla, where $[\quad, \quad]_{\star_{\operatorname{M}}}$ is the graded commutator of $\star_{\operatorname{M}}$ (with respect to the natural $\mathbb{N}$-grading on forms on $M$) and $R \in \Omega^2(M, \operatorname{Sym}^2T^\vee M)$ is determined by $\nabla^2 = \tfrac{1}{\hbar}[R, \quad]_{\star_{\operatorname{M}}}$.\par
Let $\tilde{\omega} \in \Omega^1(M, T^\vee M)$ be the unique element such that $\delta = \tfrac{1}{\hbar}[\tilde{\omega}, \quad]_{\star_{\operatorname{M}}}$. The following theorem states that we can deform $\nabla$ to a flat connection by a $1$-form $\gamma \in \Omega^1(M, \mathcal{W})$.

\begin{theorem}[Theorem 2.1 in \cite{Neu2002}; a slight generalization of Theorem 3.2 in \cite{Fed1994}]
	\label{Theorem 3.5}
	Suppose $\omega_\hbar \in \hbar\Omega^2(M, \mathbb{C})[[\hbar]]$ is $d$-closed and $s \in \Omega^0(M, \mathcal{W}_{(3)})$ is such that $\pi_0(s) = 0$. Then there exists a unique element $A \in \Omega^1(M, \mathcal{W}_{(2)})$ such that $\delta^{-1}A = s$ and $\gamma = -\tilde{\omega} + A$ is a solution to
	\begin{equation}
		\label{Equation 3.2}
		R + \nabla \gamma + \tfrac{1}{2\hbar} [\gamma, \gamma]_{\star_{\operatorname{M}}} = -\omega + \omega_\hbar.
	\end{equation}
	In particular, the connection $D_\gamma = \nabla + \tfrac{1}{\hbar}[\gamma, \quad]_{\star_{\operatorname{M}}} = \nabla - \delta + \tfrac{1}{\hbar}[A, \quad]_{\star_{\operatorname{M}}}$ on $\mathcal{W}$ is flat.
\end{theorem}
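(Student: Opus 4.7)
The plan is to solve Equation (3.2) via a weight-filtration induction on the Weyl bundle, and then to deduce flatness of $D_\gamma$ from the equation by a direct algebraic manipulation. Substituting $\gamma = -\tilde{\omega} + A$ into Equation (3.2) and using the defining identity $\tfrac{1}{\hbar}[\tilde{\omega}, \cdot]_{\star_{\operatorname{M}}} = \delta$, the routine computation $\tfrac{1}{2\hbar}[\tilde{\omega}, \tilde{\omega}]_{\star_{\operatorname{M}}} = -\omega$ arising from the explicit expression $\tilde{\omega} = \omega_{ij} y^i dx^j$, and the compatibility $\nabla \tilde{\omega} = 0$ (a consequence of $\nabla \omega = 0$ for a symplectic connection), the equation reduces to $\delta A = F(A)$, where
\begin{equation*}
F(A) := R + \nabla A + \tfrac{1}{2\hbar}[A, A]_{\star_{\operatorname{M}}} - \omega_\hbar.
\end{equation*}
The Hodge-type decomposition $\operatorname{Id} - \pi_0 = \delta \circ \delta^{-1} + \delta^{-1} \circ \delta$ together with $\pi_0 A = 0$ (since $A$ is a $1$-form) then converts this into the fixed-point equation $A = \delta s + \delta^{-1} F(A)$, with the gauge condition $\delta^{-1} A = s$ absorbed into the first term.

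Next, I would solve this fixed-point equation recursively on the weight filtration. The operator $\delta^{-1}$ strictly raises total weight; both $R$ and $\omega_\hbar$ live in weight $\geq 2$; the prescribed section $s$ lies in weight $\geq 3$ so $\delta s$ lies in weight $\geq 2$; and the weight-$r$ parts of $\nabla A$ and $\tfrac{1}{2\hbar}[A, A]_{\star_{\operatorname{M}}}$ depend only on the components $A_{(r')}$ with $r' < r$. Hence one can uniquely determine $A = \sum_{r \geq 2} A_{(r)}$ level by level, which furnishes the desired existence and uniqueness for the fixed-point equation.

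The main obstacle is verifying that the $A$ so constructed satisfies $\delta A = F(A)$ on the nose, not merely modulo the kernel of $\delta^{-1}$. Setting $E := \delta A - F(A) \in \Omega^2(M, \mathcal{W})$, one has $\delta^{-1} E = 0$ directly from the recursion, while $\pi_0 E = 0$ automatically since $E$ is a $2$-form. I would then establish a Bianchi-type identity $D_\gamma E = 0$ using $\nabla^2 = \tfrac{1}{\hbar}[R, \cdot]_{\star_{\operatorname{M}}}$, the graded Jacobi identity for $[\cdot, \cdot]_{\star_{\operatorname{M}}}$, and $d \omega_\hbar = 0$. Expanding $D_\gamma = \nabla - \delta + \tfrac{1}{\hbar}[A, \cdot]_{\star_{\operatorname{M}}}$ and applying $\delta^{-1}$ then yields $E = \delta^{-1}\bigl(\nabla E + \tfrac{1}{\hbar}[A, E]_{\star_{\operatorname{M}}}\bigr)$; since the right-hand side strictly raises weight, a second weight-induction forces $E = 0$. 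Tracking signs for the odd-degree forms $\tilde{\omega}$ and $A$ in the graded Jacobi manipulations is the delicate bookkeeping here.

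Once Equation (3.2) is established, flatness is formal. The standard manipulation
\begin{equation*}
D_\gamma^2 a = \tfrac{1}{\hbar}\bigl[\, R + \nabla \gamma + \tfrac{1}{2\hbar}[\gamma, \gamma]_{\star_{\operatorname{M}}},\ a \,\bigr]_{\star_{\operatorname{M}}} = \tfrac{1}{\hbar}[-\omega + \omega_\hbar,\ a]_{\star_{\operatorname{M}}}
\end{equation*}
vanishes for every section $a \in \Omega^*(M, \mathcal{W})$ because $-\omega + \omega_\hbar$ is a scalar-valued form with no $y$-dependence, hence central in the fibrewise Moyal algebra.
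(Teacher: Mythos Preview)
The paper does not supply its own proof of this theorem; it is quoted with attribution to Neumaier and Fedosov and then used as a black box in the proof of Theorem~\ref{Theorem 3.4}. Your proposal is the standard Fedosov iteration argument from those references and is correct: the reduction of Equation~(\ref{Equation 3.2}) to $\delta A = F(A)$, the conversion to the fixed-point equation $A = \delta s + \delta^{-1}F(A)$ via the Hodge-type decomposition, the weight-filtration recursion (using that $\delta^{-1}$ raises weight while $\nabla$ and the Moyal bracket respect it), the Bianchi-identity verification that the defect $E$ vanishes, and the centrality argument for flatness are all as in the cited sources. There is nothing to compare against in the present paper beyond noting that your approach matches the one it cites.
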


\begin{remark}
	In the above theorem, if $s = 0$, then $A \in \Omega^1(M, \mathcal{W}_{(3)})$.
\end{remark}

Let $\gamma$ be given as in Theorem \ref{Theorem 3.5}. The projection $\pi_0: (\Omega^*(M, \mathcal{W}), D_\gamma) \to (\mathcal{C}^\infty(M, \mathbb{C}), 0)$ is then a quasi-isomorphism. Identifying functions on $M$ with $D_\gamma$-flat sections of $\mathcal{W}$, $\star_{\operatorname{M}}$ induces a star product on $M$. Indeed, the de Rham cohomology class $\tfrac{1}{\hbar}[-\omega + \omega_\hbar]$ is independent of the choices of symplectic connections $\nabla$ and elements $s \in \Omega^0(M, \mathcal{W}_{(3)})$ with $\pi_0(s) = 0$ \cite{Neu2002}.\par
Every deformation quantization $(\mathcal{C}^\infty(M, \mathbb{C})[[\hbar]], \star)$ of $(M, \omega)$ is equivalent to the one obtained by a solution $\gamma$ to (\ref{Equation 3.2}) as in Theorem \ref{Theorem 3.5} for some $d$-closed form $\omega_\hbar \in \hbar \Omega^2(M, \mathbb{C})[[\hbar]]$. We call $\tfrac{1}{\hbar}[-\omega + \omega_\hbar]$ \emph{Fedosov's characteristic class} of $(\mathcal{C}^\infty(M, \mathbb{C})[[\hbar]], \star)$. This establishes a bijection between equivalence classes of deformation quantizations of $(M, \omega)$ and elements in $-\tfrac{1}{\hbar} [\omega] + H^2(M, \mathbb{C})[[\hbar]]$.

\begin{proof}[\myproof{Theorem}{\ref{Theorem 3.4}}]
	By Lemma 2 in \cite{KutLoo2000}, every class in $\hbar H^2(M, \mathbb{C})[[\hbar]]$ can be represented by a real analytic $d$-closed element $\omega_\hbar \in \hbar \Omega^*(M, \mathbb{C})[[\hbar]]$. We pick such an element $\omega_\hbar$, and also a real analytic symplectic connection $\nabla$ on $(M, \omega)$, whose existence is shown by Proposition \ref{Proposition C.1}. In this case, $(M, \omega, \nabla)$ is called a \emph{real analytic Fedosov manifold} in \cite{GelRetShu1998}.\par
	Now let $A \in \Omega^1(M, \mathcal{W}_{(3)})$ be the unique element such that $\delta^{-1}A = 0$ and $\gamma = -\tilde{\omega} + A$ is a solution to (\ref{Equation 3.2}). In the proof of Theorem \ref{Theorem 3.5}, $A$ is solved iteratively and the terms involved in the iteration are all real analytic. Hence, we can deduce that $D_\gamma$ is a real analytic connection. The projection $\pi_0: \Omega^*(M, \mathcal{W}) \to \mathcal{C}^\infty(M, \mathbb{C})$ is also real analytic. We can then clearly see that if $f, g$ are local real analytic complex valued functions on $M$, then so is $f \star g$.
\end{proof}

\subsection{Holomorphic deformation quantizations of a Stein complex symplectic manifold}
\label{Subsection 3.3}
\quad\par
Let $(X, \Omega)$ be a holomorphic symplectic manifold and $\mathcal{O}_X$ be the sheaf of holomorphic functions on $X$. A \emph{holomorphic deformation quantization} of $(X, \Omega)$ is a sheaf of $\hbar$-adically complete flat $\mathbb{C}[[\hbar]]$-algebras $\mathcal{A}$ on $X$ together with an isomorphism of sheaves of $\mathbb{C}$-algebras $\psi: \mathcal{A}/\hbar \mathcal{A} \to \mathcal{O}_X$ such that the first order term of the commutator on $\mathcal{A}$ is the holomorphic Poisson structure induced by $\Omega$. It was first studied by Nest-Tsygan \cite{NesTsy2001} (the general theory they developed will be reviewed in Appendix \ref{Appendix B}). Unlike the smooth case, although $\mathcal{A}$ can be locally trivialized by a holomorphic star product of local formal holomorphic functions, it might not be globally trivialized. In this subsection, we focus on the case when $X$ is Stein. We will prove the following theorem.

\begin{theorem}
	\label{Theorem 3.7}
	Let $(X, \Omega)$ be a Stein holomorphic symplectic manifold. Then every holomorphic deformation quantization $(\mathcal{A}, \psi)$ of $(X, \Omega)$ has a global differential trivialization, i.e. there is an isomorphism of $\mathbb{C}[[\hbar]]$-algebras $\mathcal{A} \cong (\mathcal{O}_X[[\hbar]], \star)$ for some holomorphic star product $\star$ on $\mathcal{O}_X[[\hbar]]$ such that $\psi$ is identified as the canonical isomorphism $\mathcal{O}_X[[\hbar]]/ ( \hbar \mathcal{O}_X[[\hbar]] ) \cong \mathcal{O}_X$.
\end{theorem}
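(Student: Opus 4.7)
The plan is to carry out a holomorphic version of the Fedosov construction recalled in Subsection \ref{Subsection 3.2} and to upgrade local existence to a global trivialization via Cartan's Theorem B. Two Stein-specific facts will be used repeatedly: $H^{q}(X,\mathcal{F})=0$ for every $q\geq 1$ and every coherent sheaf $\mathcal{F}$, and (as a consequence) every de Rham class on $X$ is represented by a $d$-closed holomorphic form.

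First, I would install the Fedosov machinery holomorphically. Replace $T^{\vee}M_{\mathbb{C}}$ by the holomorphic cotangent bundle $T^{\vee,1,0}X$, form the holomorphic Weyl bundle $\mathcal{W}^{\mathrm{hol}}=\widehat{\operatorname{Sym}}\,T^{\vee,1,0}X[[\hbar]]$, and define holomorphic analogues of $\delta$, $\delta^{-1}$, $\pi_{0}$ together with the fibrewise holomorphic Moyal product built from $\Omega$. A global holomorphic symplectic connection on $(X,\Omega)$ exists because the sheaf of such connections is a torsor over the coherent $\mathcal{O}_{X}$-module $\operatorname{Sym}^{3}T^{\vee,1,0}X$, whose $H^{1}$ vanishes by Theorem B; using the holomorphic Poincar\'e lemma I can also represent the Fedosov characteristic class of $\mathcal{A}$ (defined via any local trivialization) by a $d$-closed holomorphic form $\omega_{\hbar}\in\hbar\,\Omega^{2,\mathrm{hol}}(X)[[\hbar]]$. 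Since $\nabla$, $\delta^{-1}$, and $\star_{\operatorname{M}}$ all preserve holomorphicity, the iteration in Theorem \ref{Theorem 3.5} now produces a holomorphic flat Fedosov connection $D_{\gamma}$ on $\mathcal{W}^{\mathrm{hol}}$, and via $\pi_{0}$ its sheaf of flat sections is identified with $\mathcal{O}_{X}[[\hbar]]$ equipped with a holomorphic star product $\star$.

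Second, I would show that $(\mathcal{A},\psi)$ is isomorphic to the Fedosov model $(\mathcal{O}_{X}[[\hbar]],\star)$ as sheaves of $\mathbb{C}[[\hbar]]$-algebras. By construction the two have the same Fedosov characteristic class, so this is the Stein case of the holomorphic uniqueness statement developed via the Nest--Tsygan formalism in Appendix \ref{Appendix B}. Concretely, I would inductively lift $\psi$ to $\mathbb{C}[\hbar]/(\hbar^{n+1})$-algebra isomorphisms $\Phi_{n}:\mathcal{A}/\hbar^{n+1}\mathcal{A}\to(\mathcal{O}_{X}[[\hbar]]/\hbar^{n+1},\star)$ given locally by differential operators; the obstruction to extending $\Phi_{n}$ to $\Phi_{n+1}$ is a \v{C}ech class in $H^{1}(X,\mathcal{F}_{n})$ for a coherent sheaf $\mathcal{F}_{n}$ of holomorphic differential operators of bounded order attached to the infinitesimal derivations of $\mathcal{A}/\hbar^{n+2}\mathcal{A}$. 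Each such class is killed by Theorem B, and the inverse limit $\Phi=\varprojlim\Phi_{n}$ is the desired global differential trivialization; compatibility with $\psi$ mod $\hbar$ is built into the base case.

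The main obstacle will be the second step: identifying the obstruction sheaves $\mathcal{F}_{n}$ as \emph{coherent} $\mathcal{O}_{X}$-modules so that Theorem B applies, and keeping the lifts $\Phi_{n}$ given by differential operators so that the transported product on $\mathcal{O}_{X}[[\hbar]]$ remains bidifferential. The cleanest path will be to recognize these obstructions, via the Nest--Tsygan formalism of Appendix \ref{Appendix B}, as Hochschild-type cohomology of the formal Weyl algebra, which is manifestly coherent as an $\mathcal{O}_{X}$-module; adapting this analysis from the smooth to the holomorphic Stein category is the technical heart of the proof.
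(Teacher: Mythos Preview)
Your first step is essentially the paper's proof: choose a holomorphic symplectic connection on $X$ (the paper proves existence on Stein manifolds in Proposition~\ref{Proposition C.2}, exactly by the torsor argument you indicate), then run the holomorphic Fedosov iteration. The paper packages this as Theorem~\ref{Theorem 3.8} (a direct application of Vaisman's theorem): with a holomorphic $\nabla$ one can take the $(0,1)$-part $B$ of the Fedosov $1$-form to be zero, so that the $(0,1)$-part of $D_\gamma$ is the usual $\overline{\partial}$ and $D_\gamma$-flat sections are identified with $\mathcal{O}_X[[\hbar]]$ with a holomorphic star product. Your invocation of Theorem~\ref{Theorem 3.5} is slightly off, since that is the real statement; the holomorphic analogue (Theorem~\ref{Theorem 3.8}) is what you need, but your sketch captures the same content.

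In the second step you diverge. The paper does not build the isomorphism $\mathcal{A}\cong(\mathcal{O}_X[[\hbar]],\star)$ by an order-by-order \v{C}ech argument; it simply invokes the Nest--Tsygan classification (Theorem~\ref{Theorem B.4}), noting that Steinness gives $H^2(X,\mathbb{C})\cong\mathbb{H}^2(X,F^1\Omega_X^*)$ so that every equivalence class is hit by the construction of step one, together with the fact (stated just before Theorem~\ref{Theorem B.4}) that every holomorphic deformation quantization is already isomorphic to the sheaf of flat sections of some Fedosov-type connection on $\mathcal{W}$. Your direct approach is viable, but the way you phrase the obstruction is not quite right: the failure to extend $\Phi_n$ to $\Phi_{n+1}$ is not purely a \v{C}ech class in $H^1$ of a coherent sheaf. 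There are two layers: first a local existence question governed by Hochschild $H^2$ of $\mathcal{O}_X$ (via HKR this is $H^0(\wedge^2 T_X)\oplus H^1(T_X)\oplus H^2(\mathcal{O}_X)$, and only the $H^0$ piece survives on a Stein manifold), whose vanishing is precisely where the equality of characteristic classes enters; and only then a gluing obstruction in $H^1(X,T_X)$, which Theorem~B kills. If you unwind this carefully you are essentially reproving Theorem~\ref{Theorem B.4} in the Stein case, which is fine but is more than ``identify a coherent $\mathcal{F}_n$ and apply Theorem~B''. The paper's route is shorter because it outsources this analysis to Nest--Tsygan; yours is more self-contained but needs the Hochschild step made explicit.
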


An implication of Steinness of $X$ is that $X$ is admissible in the sense of Definition \ref{Definition B.2} because $H^1(X, \mathcal{O}_X) = H^2(X, \mathcal{O}_X) = 0$. By Theorem \ref{Theorem B.4}, equivalence classes of holomorphic deformation quantizations of $(X, \Omega)$ are thus in bijection with elements in $-\tfrac{1}{\hbar}[\Omega] + \mathbb{H}^2(X, F^1\Omega_X^*)[[\hbar]]$, where $\mathbb{H}^2(X, F^1\Omega_X^*)$ is the second hypercohomology of the Hodge filtration $F^1\Omega_X^*$ of the sheaf $(\Omega_X^*, \partial)$ of holomorphic de-Rham complex on $X$. As $H^1(X, \mathcal{O}_X) = H^2(X, \mathcal{O}_X) = H^1(X, \Omega_X^1) = 0$,
\begin{equation*}
	H^2(X, \mathbb{C}) \cong \mathbb{H}^2(X, F^1\Omega_X^*) \cong \frac{ \{ \alpha \in H^0(X, \Omega_X^2): \partial \alpha = 0 \} }{ \partial (H^0(X, \Omega_X^1)) }.
\end{equation*}
Thus, it suffices to consider a $d$-closed element $\Omega_\hbar \in \hbar \Omega^{2, 0}(X)[[\hbar]]$, whose cohomology class is identified with an element in $\hbar \mathbb{H}^2(X, F^1\Omega_X^*)[[\hbar]]$, and we need to construct a holomorphic star product on $\mathcal{O}_X[[\hbar]]$ whose equivalence class corresponds to $\tfrac{1}{\hbar} [-\Omega + \Omega_\hbar]$.\par
Indeed, every holomorphic deformation quantization of $(X, \Omega)$ is isomorphic to the sheaf of flat sections of the holomorphic Weyl bundle $\mathcal{W} := \widehat{\operatorname{Sym}} T^{\vee (1, 0)}X[[\hbar]]$ of $(X, \Omega)$ equipped with a certain flat connection \cite{NesTsy2001}. This provides a geometric construction of holomorphic deformation quantizations, the first step of which is to choose an $T^{1, 0}X$-connection $\nabla$ on $T^{1, 0}X$ preserving $\Omega$. Similar to the smooth case, one can define the operators $\delta, \delta^{-1}, \pi_0$, the fibrewise holomorphic Moyal product $\star_{\operatorname{M}}$ with its graded commutator $[\quad, \quad]_{\star_{\operatorname{M}}}$, and the weight filtration on $\mathcal{W}$ (see Appendix \ref{Appendix B}). Let $R \in \Omega^{2, 0}(X, \operatorname{Sym}^2 T^{\vee (1, 0)}X)$ be given by $(\nabla + \overline{\partial})^2 = \tfrac{1}{\hbar} [R, \quad]_{\star_{\operatorname{M}}}$ and $\tilde{\Omega} \in \Omega^{1, 0}(X, T^{\vee (1, 0)}X)$ be given by $\delta = \tfrac{1}{\hbar} [\tilde{\Omega}, \quad]_{\star_{\operatorname{M}}}$.\par
By Theorem \ref{Theorem B.3}, one can deform $\nabla + \overline{\partial}$ to a flat connection $D_\gamma := \nabla + \overline{\partial} - \delta + \tfrac{1}{\hbar} [A + B, \quad]_{\star_{\operatorname{M}}}$ for some $A \in \Omega^{1, 0}(X, \mathcal{W}_{(2)})$ and $B \in \Omega^{0, 1}(X, \mathcal{W}_{(3)})$. Then the sheaf of $D_\gamma$-flat sections of $\mathcal{W}$ forms a holomorphic deformation quantization $\mathcal{A}$ of $(X, \Omega)$. If $B$ is non-zero, then the $(0, 1)$-part $\overline{\partial} + \tfrac{1}{\hbar}[B, \quad]_{\star_{\operatorname{M}}}$ of $D_\gamma$ is not the holomorphic structure of $\mathcal{W}$, thus $\mathcal{A}$ is not of the form $(\mathcal{O}_X[[\hbar]], \star)$. In general, $B$ cannot be chosen to vanish, unless one can choose a torsion-free holomorphic connection $\nabla$ on $T^{1, 0}X$ such that $\nabla \Omega = 0$, which we call a \emph{holomorphic symplectic connection} on $(X, \Omega)$. The following theorem does not require that $X$ is Stein.

\begin{theorem}[a direct application of Theorem 2.1 in \cite{Vai2002}]
	\label{Theorem 3.8}
	Let $\nabla$ be a holomorphic symplectic connection on $(X, \Omega)$ and $\Omega_\hbar \in \hbar\Omega^{2, 0}(X)[[\hbar]]$ be $d$-closed. Then there exists a unique $\overline{\partial}$-closed element $A \in \Omega^{1, 0}(X, \mathcal{W}_{(3)})$ such that $\delta^{-1} A = 0$ and $\gamma := -\tilde{\Omega} + A$ is a solution to
	\begin{equation}
		\label{Equation 3.3}
		R + \nabla \gamma + \tfrac{1}{2\hbar} [\gamma, \gamma]_{\star_{\operatorname{M}}} = -\Omega + \Omega_\hbar,
	\end{equation}
	In particular, the connection $D_\gamma := \nabla + \overline{\partial} + \tfrac{1}{\hbar}[\gamma, \quad]_{\star_{\operatorname{M}}}$ on $\mathcal{W}$ is flat.
\end{theorem}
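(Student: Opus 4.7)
The plan is to adapt the iterative construction of Theorem \ref{Theorem 3.5} to the holomorphic setting, along the lines of Vaisman \cite{Vai2002}. Because $\nabla$ is holomorphic symplectic, $\nabla\tilde{\Omega} = 0$ and $\tfrac{1}{2\hbar}[\tilde{\Omega}, \tilde{\Omega}]_{\star_{\operatorname{M}}}$ is a scalar multiple of $\Omega$; substituting $\gamma = -\tilde{\Omega} + A$ into (\ref{Equation 3.3}) and using $\tfrac{1}{\hbar}[\tilde{\Omega}, \,\cdot\,]_{\star_{\operatorname{M}}} = \delta$ collapses the equation to one of the form $\delta A = R + \nabla A + \tfrac{1}{2\hbar}[A, A]_{\star_{\operatorname{M}}} - \Omega_\hbar$. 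Applying $\delta^{-1}$, using $\operatorname{Id} - \pi_0 = \delta\delta^{-1} + \delta^{-1}\delta$, and imposing $\delta^{-1}A = 0$ together with $\pi_0 A = 0$ (automatic from $A \in \mathcal{W}_{(2)}$), this converts to the fixed-point equation
\begin{equation*}
	A = \delta^{-1}\bigl( R + \nabla A + \tfrac{1}{2\hbar}[A, A]_{\star_{\operatorname{M}}} - \Omega_\hbar \bigr).
\end{equation*}
I would solve this by Picard iteration starting from $A^{(0)} = 0$; since $\delta^{-1}$ raises weight by one while $\nabla$ and the fibrewise bracket do not lower it, the iterates stabilize modulo progressively higher weight subbundles, producing a unique $A \in \Omega^{1, 0}(X, \mathcal{W}_{(3)})$ with $\delta^{-1}A = 0$.

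The step I expect to require the most care is establishing $\overline{\partial}A = 0$. Since $R$, $\Omega_\hbar$, $\tilde{\Omega}$, and $\nabla$ are all holomorphic, applying $\overline{\partial}$ to the fixed-point equation yields a linear homogeneous equation of the same contracting type for $\overline{\partial}A$ with vanishing source; weight-by-weight uniqueness then forces $\overline{\partial}A = 0$. Once holomorphicity of $A$ is in hand, flatness of $D_\gamma = \nabla + \overline{\partial} + \tfrac{1}{\hbar}[\gamma, \,\cdot\,]_{\star_{\operatorname{M}}}$ reduces to a Jacobi identity calculation: expanding
\begin{equation*}
	D_\gamma^2 = \tfrac{1}{\hbar}\bigl[ R + (\nabla + \overline{\partial})\gamma + \tfrac{1}{2\hbar}[\gamma, \gamma]_{\star_{\operatorname{M}}}, \,\cdot\,\bigr]_{\star_{\operatorname{M}}},
\end{equation*}
and equation (\ref{Equation 3.3}) together with $\overline{\partial}\gamma = 0$ reduces the bracket's first argument to $-\Omega + \Omega_\hbar$, a scalar form which is central for $\star_{\operatorname{M}}$.

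The main obstacle --- and the reason the holomorphic symplectic hypothesis is essential --- is precisely this holomorphicity of $A$. Without a torsion-free connection satisfying $\nabla\Omega = 0$ and preserving the holomorphic structure, the additional $(0, 1)$-form correction $B$ from Theorem \ref{Theorem B.3} would be unavoidable and the iteration would leave $\Omega^{1, 0}(X, \mathcal{W})$; the assumption on $\nabla$ is what guarantees that every ingredient entering the Picard iteration is of pure type $(1, 0)$ and holomorphic, so that the iterates and their limit inherit these properties.
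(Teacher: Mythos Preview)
Your proposal is correct and is exactly the standard Fedosov--Vaisman iteration; the paper itself gives no proof beyond citing \cite{Vai2002}, and what you outline is precisely that argument. Two small remarks for completeness: $\pi_0 A = 0$ holds because $A$ is a $(1,0)$-form (so $p \geq 1$ in the definition of $\pi_0$), not merely because $A \in \mathcal{W}_{(2)}$; and the Picard iteration a priori only solves the fixed-point equation $A = \delta^{-1}(\cdots)$, so one still needs the usual Bianchi-identity step (setting $\Theta := \delta A - R - \nabla A - \tfrac{1}{2\hbar}[A,A]_{\star_{\operatorname{M}}} + \Omega_\hbar$ and showing $\Theta$ satisfies a homogeneous contracting equation, hence vanishes) before invoking (\ref{Equation 3.3}) in your flatness computation --- but this is routine and your overall structure is right.
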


\begin{proof}[\myproof{Theorem}{\ref{Theorem 3.7}}]
	Fix a $d$-closed element $\Omega_\hbar \in \hbar \Omega^{2, 0}(X)[[\hbar]]$. By hypothesis, $X$ is Stein. Then we pick a holomorphic symplectic connection $\nabla$ on $(X, \Omega)$, whose existence is shown by Proposition \ref{Proposition C.2}. By Theorem \ref{Theorem 3.8}, there exists a unique $\overline{\partial}$-closed element $A \in \Omega^{1, 0}(X, \mathcal{W}_{(3)})$ such that $\delta^{-1} A = 0$ and $\gamma := -\tilde{\Omega} + A$ is a solution to (\ref{Equation 3.3}). As a consequence, the embedding
	\begin{equation*}
		(\Omega_{\operatorname{hol}}^{*, 0}(X, \mathcal{W}), -\Omega + \Omega_\hbar, D_\gamma^{1, 0}, \star_{\operatorname{M}}) \hookrightarrow (\Omega^*(X, \mathcal{W}), -\Omega + \Omega_\hbar, D_\gamma, \star_{\operatorname{M}})
	\end{equation*}
	is a quasi-isomorphism of curved dgas, where $\Omega_{\operatorname{hol}}^{*, 0}(X, \mathcal{W})$ is the space of holomorphic $\mathcal{W}$-valued $(*, 0)$-forms on $X$ and $D_\gamma^{1, 0} = \nabla + \tfrac{1}{\hbar}[\gamma, \quad]_{\star_{\operatorname{M}}}$ is the $(1, 0)$-part of $D_\gamma$. The canonical projection $\pi_0: (\Omega_{\operatorname{hol}}^{*, 0}(X, \mathcal{W}), D_\gamma^{1, 0}) \to (\mathcal{O}(X)[[\hbar]], 0)$ is also a quasi-isomorphism. Therefore, the holomorphic deformation quantization of $(X, \Omega)$ determined by $D_\gamma$-flat sections of $\mathcal{W}$ is of the form $(\mathcal{O}_X[[\hbar]], \star)$.
\end{proof}

\subsection{Recovering holomorphic star products on $\mathcal{O}_X[[\hbar]]$ from star products on $\mathcal{C}_M^\infty[[\hbar]]$}
\label{Subsection 3.4}
\quad\par
Suppose $\iota: (M, \omega) \hookrightarrow (X, \Omega)$ is a complexification of a symplectic manifold $(M, \omega)$. We discuss how a holomorphic deformation quantization $\mathcal{A}$ of $(X, \Omega)$ restricts to a deformation quantization of $(M, \omega)$, up to equivalence.\par
By Theorem \ref{Theorem 3.7}, the restriction of $\mathcal{A}$ on a Stein neighbourhood of $M$ in $X$ has a differential trivialization. Without loss of generality, we assume $X$ is Stein and $\mathcal{A} = (\mathcal{O}_X[[\hbar]], \widetilde{\star})$ for some holomorphic star product $\widetilde{\star}$ on $\mathcal{O}_X[[\hbar]]$. Let $\{\widetilde{C}_r\}_{r=1}^\infty$ be the defining sequence of holomorphic bi-differential operators for $\widetilde{\star}$. Each term $\widetilde{C}_r$ induces a bi-differential operator $C_r$ on $M$ and the sequence $\{C_r\}_{r=1}^\infty$ determines a real analytic deformation quantization $(\mathcal{C}^\infty(M, \mathbb{C}), \star)$ of $(M, \omega)$, whence we have a morphism of sheaves of $\mathbb{C}[[\hbar]]$-algebras
\begin{equation*}
	\iota^*: \mathcal{A} \to \iota_*( \mathcal{C}_M^\omega[[\hbar]], \star).
\end{equation*}

Conversely, suppose $(\mathcal{C}^\infty(M, \mathbb{C})[[\hbar]], \star)$ is a real analytic deformation quantization on $(M, \omega)$ with the defining sequence $\{ C_r \}_{r=1}^\infty$ of bi-differential operators. A natural question is whether one can reconstruct a complexification $(X, \Omega)$ of $(M, \omega)$ and a holomorphic deformation quantization $(\mathcal{A}, \psi)$ of $(X, \Omega)$ so that $\mathcal{A}$ restricts to $(\mathcal{C}^\infty(M, \mathbb{C})[[\hbar]], \star)$. A naive consideration suggests that we should take analytic continuation of $C_r$'s. The sequence $\{C_r\}_{r=1}^\infty$ is, however, infinite, thus there might not be a complexification $(X, \Omega)$ of $(M, \omega)$ admitting analytic continuation of all the $C_r$'s.\par
To overcome this problem, we observe that equivalence classes of deformation quantizations of $(M, \omega)$ are determined by Fedosov's characteristic classes. By taking analytic continuation of a real analytic representative of Fedosov's characteristic classes of $(\mathcal{C}^\infty(M, \mathbb{C})[[\hbar]], \star)$ instead, we can show that the answer to the above question is affirmative.

\begin{theorem}[$=$ Theorem \ref{Theorem 1.2}]
	Every symplectic manifold $(M, \omega)$ admits a complexification $\iota: (M, \omega) \hookrightarrow (X, \Omega)$ which satisfies the following condition. For any deformation quantization $(\mathcal{C}^\infty(M, \mathbb{C})[[\hbar]], \star)$ of $(M, \omega)$, there exists a holomorphic deformation quantization $(\mathcal{A}, \psi)$ of $(X, \Omega)$ and a morphism of sheaves of $\mathbb{C}[[\hbar]]$-algebras $\tau: \mathcal{A} \to \iota_*( \mathcal{C}_M^\infty[[\hbar]], \star)$, such that the following diagram is commutative:
	\begin{center}
		\begin{tikzcd}
			\mathcal{A} \ar[r, "\tau"] \ar[d] & \iota_*\mathcal{C}_M^\infty[[\hbar]] \ar[d]\\
			\mathcal{O}_X \ar[r, "\iota^*"'] & \iota_*\mathcal{C}_M^\infty
		\end{tikzcd}
	\end{center}
	where $\mathcal{A} \to \mathcal{O}_X$ is the composition of $\psi: \mathcal{A}/\hbar \mathcal{A} \to \mathcal{O}_X$ with the canonical projection $\mathcal{A} \to \mathcal{A}/\hbar \mathcal{A}$, and $\iota_*\mathcal{C}_M^\infty[[\hbar]] \to \iota_*\mathcal{C}_M^\infty$ is the pushforward of the canonical projection $\mathcal{C}_M^\infty[[\hbar]] \to \mathcal{C}_M^\infty$ by $\iota$.
\end{theorem}

The main idea is to make use of Fedosov quantization as discussed in Subsections \ref{Subsection 3.2} and \ref{Subsection 3.3}. Note that there are similar structures on the Weyl bundle of a real symplectic manifold and the holomorphic Weyl bundle of a holomorphic symplectic manifold. To avoid confusion, we reserve the notations $\mathcal{W}, \mathcal{W}_{(k)}, \delta, \delta^{-1}, \pi_0, \star_{\operatorname{M}}$ for the Weyl bundle of $(M, \omega)$ and its structures, and use notations $\widetilde{\mathcal{W}}, \widetilde{\mathcal{W}}_{(k)}, \widetilde{\delta}, \widetilde{\delta}^{-1}, \widetilde{\pi}_0, \widetilde{\star}_{\operatorname{M}}$ for their counterparts on a complexification $(X, \Omega)$ of $(M, \omega)$.

\begin{proof}[\myproof{Theorem}{\ref{Theorem 1.2}}]
	Pick a Stein complexification $\iota: (M, \omega) \hookrightarrow (X, \Omega)$ admitting a holomorphic symplectic connection $\widetilde{\nabla}$ on $(X, \Omega)$, i.e. a torsion-free holomorphic connection $\widetilde{\nabla}$ such that $\widetilde{\nabla} \Omega = 0$. Then $\nabla := \iota^*\widetilde{\nabla}$ is a real analytic symplectic connection on $(M, \omega)$. Shrinking $X$ if necessary, we can also assume that $\iota: M \hookrightarrow X$ admits a deformation retraction so that we have isomorphisms
	\begin{equation*}
		\frac{\{ \alpha \in H^0(X, \Omega_X^2): \partial \alpha = 0 \}}{\partial H^0(X, \Omega_X^1)} \cong H^2(X, \mathbb{C}) \cong H^2(M, \mathbb{C}).
	\end{equation*}
	Now consider any deformation quantization $(\mathcal{C}^\infty(M, \mathbb{C})[[\hbar]], \star)$. Via the above isomorphisms of cohomologies, we can choose a $d$-closed element $\Omega_\hbar \in \hbar \Omega^{2, 0}(X)[[\hbar]]$ so that $\tfrac{1}{\hbar} [-\omega + \omega_\hbar]$ is Fedosov's characteristic class of $(\mathcal{C}^\infty(M, \mathbb{C})[[\hbar]], \star)$, where $\omega_\hbar := \iota^*\Omega_\hbar$ is by construction real analytic. Let $A \in \Omega^1(M, \mathcal{W}_{(3)})$ be the unique element such that $\delta^{-1}A = 0$ and $\gamma := -\tilde{\omega} + A$ is a solution to (\ref{Equation 3.2}). It defines a real analytic deformation quantization of $(M, \omega)$ whose Fedosov's characteristic class is $\tfrac{1}{\hbar} [-\omega + \omega_\hbar]$. Without loss of generality, we assume $(\mathcal{C}^\infty(M, \mathbb{C})[[\hbar]], \star)$ is given in this way.\par
	Similarly, let $\widetilde{A} \in \Omega^{1, 0}(X, \widetilde{\mathcal{W}}_{(3)})$ be such that $\widetilde{\delta}^{-1}\widetilde{A} = \overline{\partial} \widetilde{A} = 0$ and $\widetilde{\gamma} := -\tilde{\Omega} + \widetilde{A}$ is a solution to (\ref{Equation 3.3}). It defines a holomorphic deformation quantization $(\mathcal{A}, \psi)$ of $(X, \Omega)$, where $\mathcal{A}$ is of the form $(\mathcal{O}_X[[\hbar]], \widetilde{\star})$ and $\psi: \mathcal{A}/\hbar \mathcal{A} \to \mathcal{O}_X$ is the canonical identification $\mathcal{O}_X[[\hbar]] / ( \hbar \mathcal{O}_X[[\hbar]] ) \cong \mathcal{O}_X$.\par
	Tracing the iterative construction of $A$ in Theorem \ref{Theorem 3.5} and comparing it with that of $\widetilde{A}$ in Theorem \ref{Theorem 3.8}, it is evident that $A = \iota^*\widetilde{A}$. Then the following is a morphism of curved dgas
	\begin{equation*}
		\iota^*: (\Omega_{\operatorname{hol}}^{*, 0}(X, \widetilde{\mathcal{W}}), -\Omega + \Omega_\hbar, \widetilde{D}_{\widetilde{\gamma}}^{1, 0}, \widetilde{\star}_{\operatorname{M}}) \to (\Omega_\omega^*(M, \mathcal{W}), -\omega + \omega_\hbar, D_\gamma, \star_{\operatorname{M}}),
	\end{equation*}
	where $\Omega_{\operatorname{hol}}^{*, 0}(X, \widetilde{\mathcal{W}})$ is the space of holomorphic sections of $\left( \textstyle\bigwedge T^{\vee (1, 0)} X \right) \otimes \widetilde{\mathcal{W}}$, $\Omega_\omega^*(M, \mathcal{W})$ is the space of real analytic sections of $\left( \textstyle\bigwedge T^\vee M \right) \otimes \mathcal{W}$, $\widetilde{D}_{\widetilde{\gamma}}^{1, 0} = \widetilde{\nabla} + \tfrac{1}{\hbar} [\widetilde{\gamma}, \quad]_{\widetilde{\star}_{\operatorname{M}}}$ and $D_\gamma = \nabla + \tfrac{1}{\hbar} [\gamma, \quad]_{\star_{\operatorname{M}}}$. The restriction of $\iota^*$ on $\widetilde{D}_{\tilde{\gamma}}^{1, 0}$-flat holomorphic sections induces a morphism of sheaves of $\mathbb{C}[[\hbar]]$-algebras
	\begin{equation*}
		\tau := \iota^*: \mathcal{A} \to \iota_*(\mathcal{C}_M^\omega[[\hbar]], \star) \subset \iota_*(\mathcal{C}_M^\infty[[\hbar]], \star)
	\end{equation*}
	such that the required diagram is commutative.
\end{proof}

\section{The local model for brane quantization}
\label{Section 4}
Geometric quantization and brane quantization \cite{GukWit2009} are different approaches to obtain quantum Hilbert spaces of a symplectic manifold $(M, \omega)$. The former requires a prequantum line bundle and a polarization of $(M, \omega)$, while the latter requires a complexification $(X, \Omega)$ of $(M, \omega)$, a Lagrangian A-brane $\mathcal{B}$ supported on $M$ and a canonical coisotropic A-brane $\mathcal{B}_{\operatorname{cc}}$ on $(X, \Omega)$. The goal of this section is to show that, if the data in geometric quantization of $(M, \omega)$ are real analytic, then they uniquely determine a local model near $M$ for brane quantization.\par
In Subsection \ref{Subsection 4.1}, we will discuss a correspondence between a prequantum line bundle of $(M, \omega)$ and a canonical coisotropic A-brane $\mathcal{B}_{\operatorname{cc}}$ on $(X, \Omega)$, and we will prove Theorem \ref{Theorem 1.3}. In Subsection \ref{Subsection 4.2}, we will describe the relationship between a polarization of $(M, \omega)$ and a \emph{holomorphic polarization} of $(X, \Omega)$, which is a required datum for the definition of $\operatorname{Hom}(\mathcal{B}, \mathcal{B}_{\operatorname{cc}})$ proposed in \cite{BisGua2022, GaiWit2022}. In Subsection \ref{Subsection 4.3}, we will discuss the relationship between quantum Hilbert spaces of $(M, \omega)$ in geometric quantization and $\operatorname{Hom}(\mathcal{B}, \mathcal{B}_{\operatorname{cc}})$.

\subsection{Canonical coisotropic A-branes as extensions of prequantum line bundles}
\label{Subsection 4.1}
\quad\par
Throughout this section, we only consider \emph{real analytic prequantum line bundles} of $(M, \omega)$, i.e. real analytic Hermitian line bundles over $M$ with a real analytic unitary connection of curvature $\tfrac{1}{\sqrt{-1}}\omega$, and \emph{real analytic polarizations} of $(M, \omega)$, i.e. real analytic involutive complex Lagrangian vector subbundles of $TM_\mathbb{C}$. We first show how a canonical coisotropic A-brane on a complexification of $(M, \omega)$ induces a prequantum line bundle of $(M, \omega)$.

\begin{proposition}
	\label{Proposition 4.1}
	Let $\iota: (M, \omega) \hookrightarrow (X, \Omega)$ be a complexification of $(M, \omega)$. Suppose that $\mathcal{B}_{\operatorname{cc}}$ is a canonical coisotropic A-brane on $(X, \Omega)$ with Chan-Paton bundle $(\widehat{L}, \widehat{h}, \widehat{\nabla})$. Then
	\begin{equation*}
		(L, h, \nabla) := (\iota^*\widehat{L}, \iota^*\widehat{h}, \iota^*\widehat{\nabla})
	\end{equation*}
	is a real analytic prequantum line bundle of $(M, \omega)$.
\end{proposition}
\begin{proof}
	Shrinking $X$ if necessary, pick $\alpha \in \Omega^1(X)$ such that $\operatorname{Im} \Omega = d\alpha$ and $\iota^*\alpha = 0$. Then $\widehat{\nabla} - \alpha$ gives a holomorphic structure on $\widehat{L}$ restricting to a real analytic structure on $L$ such that $\nabla = \iota^*(\widehat{\nabla} - \alpha)$ is real analytic, and hence so is $h$. The curvature of $\nabla$ is $\tfrac{1}{\sqrt{-1}} \iota^* \operatorname{Re} \Omega = \tfrac{1}{\sqrt{-1}} \omega$.
\end{proof}

Motivated by Proposition \ref{Proposition 4.1}, we introduce the following definition.

\begin{definition}
	Let $L$ be a real analytic prequantum line bundle of $(M, \omega)$. A \emph{brane extension} of $(M, \omega, L)$ is a quadruple $(X, \Omega, \iota, \widehat{L})$, where $\iota: (M, \omega) \hookrightarrow (X, \Omega)$ is a complexification and $\mathcal{B}_{\operatorname{cc}} = (X, \widehat{L})$ is a canonical coisotropic A-brane on $(X, \Omega)$ such that $L = \iota^*\widehat{L}$ as Hermitian line bundles with unitary connections over $M$.
\end{definition}

The existence of a brane extension is known when $(M, \omega)$ is real analytic K\"ahler \cite{Don2002}.

\begin{theorem}[$=$ Theorem \ref{Theorem 1.3}]
	Let $L$ be a real analytic prequantum line bundle of $(M, \omega)$. Then $(M, \omega, L)$ admits a brane extension $(X, \Omega, \iota, \widehat{L})$. Moreover, if $(X', \Omega', \iota', \widehat{L}')$ is another brane extension of $(M, \omega, L)$, then there exists a diffeomorphism $\phi: U \to U'$ from a neighbourhood $U$ of $\iota(M)$ in $X$ onto a neighbourhood $U'$ of $\iota'(M)$ in $X'$ and an isomorphism
	\begin{equation*}
		\Phi: \widehat{L} \vert_U \to \phi^*(\widehat{L}' \vert_{U'})
	\end{equation*}
	of Hermitian line bundles with unitary connections over $U$ such that $\phi \circ \iota = \iota'$, $\phi^*(\Omega' \vert_{U'}) = \Omega \vert_U$ and $\Phi$ restricts to the identity map $\operatorname{Id}_L$.
\end{theorem}
\begin{proof}
	We first show the existence of a brane extension of $(M, \omega, L)$. Pick a complexification $\iota: (M, \omega) \hookrightarrow (X, \Omega)$ admitting a holomorphic extension of $L$ such that $\operatorname{Im} \Omega$ is $d$-exact, implying that the cohomology class $[\operatorname{Re}\Omega] = [\Omega]$ is integral. Thus, there exists a canonical coisotropic A-brane $\mathcal{B}'_{\operatorname{cc}} = (X, \widehat{L}')$ on $(X, \Omega)$. Then $L' := (\iota^*\widehat{L}') \otimes L^\vee$ is flat. Shrinking $X$ if necessary, we can assume that $\iota: M \hookrightarrow X$ admits a deformation retraction $\pi: X \to M$. Then $\widehat{L} := \widehat{L}' \otimes \pi^*(L')^\vee$ is the Chan-Paton bundle of a canonical coisotropic A-brane on $(X, \Omega)$, which restricts to $L$.\par
	Next, suppose that $(X', \Omega', \iota', \widehat{L}')$ is also a brane extension of $(M, \omega, L)$. Because the germ of complexifications of $(M, \omega)$ is unique, without loss of generality, we can assume that $X = X'$, $\Omega = \Omega'$ and $\iota = \iota'$ (and hence we can take $\phi = \operatorname{Id}_X$).\par
	Now, both the curvatures of $\nabla^{\widehat{L}}$ and $\nabla^{\widehat{L}'}$ are $\tfrac{1}{\sqrt{-1}}\operatorname{Re} \Omega$. It implies that there is an isomorphism $\Psi: \widehat{L} \to \widehat{L}'$ of Hermitian line bundles over $X$. We can modify $\Psi$ so as to obtain an isomorphism $\Psi': \widehat{L} \to \widehat{L}'$ of Hermitian line bundles over $X$ which restricts to the identity map $\operatorname{Id}_L$ as follows. The map $L \to L$ obtained from the restriction of $\Psi$ must be of the form $e^{\sqrt{-1}\eta}$ for some $\eta \in \mathcal{C}^\infty(M)$. Then $\Psi' := e^{-\sqrt{-1}\pi^*\eta} \Psi$ is a desired isomorphism.\par
	By the above argument, without loss of generality again, we can assume that $\widehat{L} = \widehat{L}'$ as Hermitian line bundles over $X$. As both the connections $\nabla^{\widehat{L}}$ and $\nabla^{\widehat{L}'}$ are unitary, $\nabla^{\widehat{L}'} = \nabla^{\widehat{L}} + \tfrac{1}{\sqrt{-1}} \alpha$ for some $d$-closed $\alpha \in \Omega^1(X)$. We can see that $\iota^*\alpha = 0$, whence $[\alpha] = 0 \in H^1(X, \mathbb{R})$. In other words, there exists $\theta \in \mathcal{C}^\infty(X)$ such that $\alpha = d\theta$. Then $\Phi = e^{\sqrt{-1}\theta}: \widehat{L} \to \widehat{L}$ is an isomorphism of Hermitian line bundles over $X$ intertwining $\nabla^{\widehat{L}}$ and $\nabla^{\widehat{L}'}$. Observe that the restriction of $\Phi$ onto $L$ preserves $\nabla^L$, whence $d(\iota^*\theta) = 0$. Replacing $\theta$ by $\theta - \pi^*\iota^*\theta$, we can conclude that $\Phi$ restricts to the identity map $\operatorname{Id}_L$.
\end{proof}

\subsection{Holomorphic extension of polarizations}
\label{Subsection 4.2}
\quad\par
We first introduce the definition of holomorphic polarizations.

\begin{definition}
	\label{Definition 4.4}
	A \emph{holomorphic polarization} \footnote{It is weaker than that in \cite{GaiWit2022} requiring a holomorphic Lagrangian fibration of $X$ with contractible fibres.} of a holomorphic symplectic manifold $(X, \Omega)$ is an involutive Lagrangian holomorphic vector subbundle $\mathcal{P}^{\operatorname{c}}$ of $T^{1, 0}X$.
\end{definition}
Note that $\mathcal{P}^{\operatorname{c}}$ determines an involutive complex Lagrangian vector subbundle $\widehat{\mathcal{P}}$ of $TX$ such that $\mathcal{P}^{\operatorname{c}} = \widehat{\mathcal{P}}^{1, 0}$. Indeed, germs of holomorphic polarizations of $(X, \Omega)$ and real analytic polarizations of $(M, \omega)$ are in one-to-one correspondence.

\begin{proposition}
	Let $(X, \Omega)$ be a complexification of $(M, \omega)$. Suppose that $\mathcal{P}^{\operatorname{c}}$ is a holomorphic polarization of $(X, \Omega)$. Then there exists a unique real analytic polarization $\mathcal{P}$ of $(M, \omega)$ such that $\mathcal{P}^{\operatorname{c}}$ is a holomorphic extension of $\mathcal{P}$ and $\mathcal{P}^{\operatorname{c}} \cap \overline{\mathcal{P}^{\operatorname{c}}} = (\widehat{\mathcal{P}} \cap TM) \otimes \mathbb{C}$. Conversely, suppose $\mathcal{P}$ is a real analytic polarization of $(M, \omega)$ and $\mathcal{P}^{\operatorname{c}} \subset T^{1, 0}X$ is its holomorphic extension. Then $\mathcal{P}^{\operatorname{c}}$ is a holomorphic polarization of $(X, \Omega)$.
\end{proposition}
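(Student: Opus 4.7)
The plan is to exploit the canonical complex-linear isomorphism
\[
\varphi : TM_\mathbb{C} \xrightarrow{\sim} T^{1,0}X|_M, \quad v \mapsto v^{1,0} = \tfrac{1}{2}(v - iJv),
\]
where $J$ is the complex structure on $X$; this is an isomorphism because $M$ is totally real in $X$, which follows from $\omega = \operatorname{Re}\Omega|_M$ being nondegenerate together with $\Omega$ being of type $(2,0)$ and $\Omega|_{TM} = \omega$ (if $v, Jv \in TM$, then $\Omega(Jv, u) = i\Omega(v,u)$ combined with the reality of $\omega$ on $TM$ forces $\omega(v, \cdot)|_{TM} = 0$). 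These same facts produce the key identity
\[
\omega^\mathbb{C}(v, w) = \Omega(v^{1,0}, w^{1,0}) \quad \text{for all } v, w \in TM_\mathbb{C},
\]
which converts real Lagrangian conditions on $M$ into holomorphic Lagrangian conditions on $T^{1,0}X|_M$, and is the bridge between the two sides of the proposition.

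For the first direction, I set $\mathcal{P} := \varphi^{-1}(\mathcal{P}^{\operatorname{c}}|_M)$, a real analytic subbundle of $TM_\mathbb{C}$; uniqueness is automatic since any real analytic polarization whose holomorphic extension is $\mathcal{P}^{\operatorname{c}}$ must satisfy $\varphi(\mathcal{P}) = \mathcal{P}^{\operatorname{c}}|_M$. The Lagrangian property of $\mathcal{P}$ is then immediate from the displayed identity. For involutivity, I pick adapted holomorphic coordinates $z^i = x^i + iy^i$ with $M = \{y = 0\}$ and a local holomorphic frame $\widetilde{V}_j = \sum_i \tilde{a}_{ji}(z)\partial_{z^i}$ of $\mathcal{P}^{\operatorname{c}}$; the induced frame $V_j = \sum_i \tilde{a}_{ji}(x)\partial_{x^i}$ of $\mathcal{P}$ satisfies $[\widetilde{V}_j, \widetilde{V}_k]|_M = ([V_j, V_k])^{1,0}$ because $\partial \tilde{a}_{ji}/\partial z^l|_M = \partial \tilde{a}_{ji}/\partial x^l|_M$ by holomorphy, so the Frobenius condition for $\mathcal{P}^{\operatorname{c}}$ descends to that for $\mathcal{P}$. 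The converse direction is symmetric: I extend a real analytic frame of $\mathcal{P}$ holomorphically on a shrunken $X$ to obtain $\mathcal{P}^{\operatorname{c}} \subset T^{1,0}X$, and the Lagrangian and integrability conditions become holomorphic identities on $X$ that vanish on $M$ by hypothesis, hence extend by the identity theorem for holomorphic functions.

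The intersection identity $\mathcal{P}^{\operatorname{c}} \cap \overline{\mathcal{P}^{\operatorname{c}}} = (\widehat{\mathcal{P}} \cap TM) \otimes \mathbb{C}$, read inside $TM_\mathbb{C}$ via $\varphi$ and its complex conjugate, reduces to the tautology $\mathcal{P} \cap \overline{\mathcal{P}} = (\mathcal{P} \cap TM) \otimes \mathbb{C}$ together with the equality $\mathcal{P} \cap TM = \widehat{\mathcal{P}} \cap TM$, which is immediate from the definition $\widehat{\mathcal{P}} = \{v \in TX : v^{1,0} \in \mathcal{P}^{\operatorname{c}}\}$ applied to real $v \in TM$. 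I expect the main obstacle to be organizational rather than mathematical: keeping straight three a priori distinct conjugation operations — intrinsic to $TM_\mathbb{C}$, interchanging $T^{1,0}X$ with $T^{0,1}X$, and induced by the antiholomorphic involution fixing $M$ — and confirming that they all agree once transported through $\varphi$ so that the stated identity is well-posed.
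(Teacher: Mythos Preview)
The paper states this proposition without proof, so there is no author's argument to compare against directly. Your approach is the natural one and is correct in substance: transport $\mathcal{P}^{\operatorname{c}}|_M$ across the canonical isomorphism $\varphi: TM_\mathbb{C} \xrightarrow{\sim} T^{1,0}X|_M$, use the identity $\omega^\mathbb{C}(v,w) = \Omega(\varphi(v),\varphi(w))$ to transfer the Lagrangian condition, and pass involutivity back and forth via compatibility of Lie brackets with $\iota^*$. For the converse, the identity-theorem argument (holomorphic functions vanishing on the totally real $M$ vanish on a neighbourhood) is exactly right.

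A few remarks. First, total reality of $M$ in $X$ is already part of the paper's definition of complexification (Appendix~A), so your derivation of it from nondegeneracy of $\omega$, while correct, is not needed. Second, Appendix~A records the fact $\iota^*[\xi_0,\xi_1] = [\iota^*\xi_0, \iota^*\xi_1]$ for local holomorphic vector fields, which lets you bypass the explicit coordinate computation for involutivity. Third, your caution about the meaning of $\overline{\mathcal{P}^{\operatorname{c}}}$ is justified: read literally as the conjugate inside $TX_\mathbb{C}$, the intersection $\mathcal{P}^{\operatorname{c}} \cap \overline{\mathcal{P}^{\operatorname{c}}}$ would be zero since the two summands sit in $T^{1,0}X$ and $T^{0,1}X$ respectively. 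The equality only makes sense once both sides are read inside $TM_\mathbb{C}$ via $\varphi$ and $\bar\varphi$ (equivalently, once $\overline{\mathcal{P}^{\operatorname{c}}}$ is understood as the holomorphic extension of $\overline{\mathcal{P}}$). Under that reading your reduction to $\mathcal{P} \cap \overline{\mathcal{P}} = (\mathcal{P} \cap TM) \otimes \mathbb{C}$ together with $\widehat{\mathcal{P}} \cap TM = \mathcal{P} \cap TM$ is valid; the first is the standard fact that a conjugation-stable complex subspace is the complexification of its real points, and the second follows immediately from $\widehat{\mathcal{P}} = \{v \in TX : v^{1,0} \in \mathcal{P}^{\operatorname{c}}\}$ restricted to real $v \in TM$.
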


\begin{example}
	\label{Example 4.5}
	If $(M, \omega)$ is real analytic K\"ahler, then the polarizations $T^{0, 1}M$ and $T^{1, 0}M$ are real analytic. Indeed, we can see from the first construction in Example \ref{Example 3.3} that, for a sufficiently small complexification $(X, \Omega)$ of $(M, \omega)$, we can always equip $(X, \Omega)$ with a pair of holomorphic Lagrangian fibrations $\pi: X \to M$ and $\check{\pi}: X \to \overline{M}$ such that the kernel $\mathcal{P}^{\operatorname{c}}$ of $d\check{\pi}: T^{1, 0}X \to T^{0, 1}M$ (resp. the kernel $\check{\mathcal{P}}^{\operatorname{c}}$ of $d\pi: T^{1, 0}X \to T^{1, 0}M$) is a holomorphic extension of $T^{1, 0}M$ (resp. $T^{0, 1}M$).
\end{example}

\subsection{Holomorphic geometric quantization}
\label{Subsection 4.3}
\quad\par
Suppose $L$ is a real analytic prequantum line bundle of $(M, \omega)$ and $\mathcal{P}$ is a real analytic polarization of $(M, \omega)$. Pick a brane extension $(X, \Omega, \iota, \widehat{L})$ of $(M, \omega, L)$ so that we can obtain a canonical coisotropic A-brane $\mathcal{B}_{\operatorname{cc}} = (X, \widehat{L})$ on $(X, \Omega)$. Let $\mathcal{B}$ be the Lagrangian A-brane on $(X, \operatorname{Im} \Omega)$ with support $M$ and Chan-Paton bundle $(M \times \mathbb{C}, d)$. We will discuss how the morphism space $\operatorname{Hom}(\mathcal{B}, \mathcal{B}_{\operatorname{cc}})$ is related to a quantum Hilbert space for the geometric quantization of $(M, \omega)$, based on the ideas given by Gaiotto-Witten \cite{GaiWit2022} and Bischoff-Gualtieri \cite{BisGua2022}. Roughly speaking, their ideas point to a holomorphic version of geometric quantization.\par
More precisely, we need to include metaplectic correction. Note that since $\mathcal{P}$ is real analytic, the quotient bundle $\mathcal{Q} := TM_\mathbb{C}/\mathcal{P}$ and its Bott connection are real analytic. The \emph{canonical line bundle} $K_\mathcal{P}$ of $\mathcal{P}$, defined as the determinant line bundle of $\mathcal{Q}^\vee$ equipped with the flat $\mathcal{P}$-connection induced by the Bott connection, is also real analytic. In order to perform metaplectic correction, we assume the existence of a real analytic square root of $K_\mathcal{P}$ and we choose one, denoted by $\sqrt{K_\mathcal{P}}$.\par
The unitary connection $\nabla$ on $L$ restricts to a flat $\mathcal{P}$-connection on $L$ so that we can equip $L^{(1)} := L \otimes \sqrt{K_\mathcal{P}}$ with the induced flat $\mathcal{P}$-connection $\nabla^{\mathcal{P}}$, which is real analytic. Similar to an ordinary real analytic connection, a real analytic $\mathcal{P}$-connection can be holomorphically extended on a sufficiently small complexification of $(M, \omega)$. By shrinking $(X, \Omega)$, we assume that $\mathcal{P}$ and $(L^{(1)}, \nabla^{\mathcal{P}})$ have holomorphic extensions $\mathcal{P}^{\operatorname{c}}$ and $((L^{(1)})^{\operatorname{c}}, \nabla^{\mathcal{P}^{\operatorname{c}}})$ on $(X, \Omega)$ respectively.\par
In this case, it makes sense to define a $\mathcal{P}^{\operatorname{c}}$-\emph{polarized holomorphic section} of $(L^{(1)})^{\operatorname{c}}$ as a $\nabla^{\mathcal{P}^{\operatorname{c}}}$-parallel holomorphic section $\sigma$ of $(L^{(1)})^{\operatorname{c}}$ 
i.e. for all local holomorphic sections $\xi$ of $\mathcal{P}^{\operatorname{c}}$, $\nabla_\xi^{\mathcal{P}^{\operatorname{c}}} \sigma = 0$. Clearly, a local $\mathcal{P}^{\operatorname{c}}$-polarized holomorphic section of $(L^{(1)})^{\operatorname{c}}$ restricts to a local $\mathcal{P}$-polarized real analytic sections of $L^{(1)}$, while a local $\mathcal{P}$-polarized real analytic sections of $L^{(1)}$ extends to a local $\mathcal{P}^{\operatorname{c}}$-polarized holomorphic sections of $(L^{(1)})^{\operatorname{c}}$. A suspicion motivated by Gaiotto-Witten \cite{GaiWit2022} and Bischoff-Gualtieri \cite{BisGua2022} is that the morphism space for the pair $(\mathcal{B}, \mathcal{B}_{\operatorname{cc}})$ should be regarded as the sheaf of $\mathcal{P}^{\operatorname{c}}$-polarized holomorphic sections of $(L^{(1)})^{\operatorname{c}}$. In this case, $\operatorname{Hom}(\mathcal{B}, \mathcal{B}_{\operatorname{cc}})$ restricts to the sheaf of $\mathcal{P}$-polarized real analytic sections of $L^{(1)}$.\par
The case when $(M, \omega)$ is spin and prequantizable real analytic K\"ahler serves as a nice example for understanding the above correspondence between geometric quantization of $(M, \omega)$ and the A-model on $(X, \operatorname{Im} \Omega)$. Pick a holomorphic Hermitian line bundle $L$ with Chern connection of curvature $\tfrac{1}{\sqrt{-1}} \omega$ and a square root $\sqrt{K}$ of the canonical line bundle of $M$. With respect to the K\"ahler polarization $\mathcal{P} = T^{0, 1}M$, a $\mathcal{P}$-polarized section of $L^{(1)} = L \otimes \sqrt{K}$ is equivalent to a holomorphic section of $L^{(1)}$, which is automatically real analytic.

\section{Compositions of morphisms of A-branes in K\"ahler case}
\label{Section 5}
The goal of this section is to construct various sheaves of algebras and their (bi)modules on any K\"ahler manifold $(M, \omega)$, which realize the compositions of morphisms in  $\operatorname{Hom}(\mathcal{B}_{\operatorname{cc}}, \mathcal{B}_{\operatorname{cc}})$, $\operatorname{Hom}(\mathcal{B}, \mathcal{B}_{\operatorname{cc}})$, $\operatorname{Hom}(\mathcal{B}_{\operatorname{cc}}, \mathcal{B})$, $\operatorname{Hom}(\overline{\mathcal{B}}_{\operatorname{cc}}, \mathcal{B}_{\operatorname{cc}})$ and $\operatorname{Hom}(\overline{\mathcal{B}}_{\operatorname{cc}}, \overline{\mathcal{B}}_{\operatorname{cc}})$ on a complexification $(X, \Omega)$ of $M$ when $M$ is real analytic K\"ahler and the Chan-Paton bundle of $\mathcal{B}$ is trivial. We follow the idea of a series of papers \cite{ChaLeuLi2021, ChaLeuLi2022a, ChaLeuLi2022b, ChaLeuLi2023} that Fedosov's approach to deformation quantization is used as a tool.\par
We first introduce some basic notations in K\"ahler geometry. In local complex coordinates $(z^1, ..., z^n)$ on $M$, we write $\omega = \omega_{\alpha\overline{\beta}} dz^\alpha \wedge d\overline{z}^\beta$ and let $(\omega^{\overline{\alpha}\beta})$ be the inverse of $(\omega_{\alpha\overline{\beta}})$. Let $\nabla$ be the Levi-Civita connection of the K\"ahler manifold $(M, \omega)$. The curvature of $\nabla$ is locally written as
\begin{equation*}
	\nabla^2 \left( \frac{\partial}{\partial z^\mu} \right) = R_{\alpha\overline{\beta}\mu}^\nu dz^\alpha \wedge d\overline{z}^\beta \otimes \frac{\partial}{\partial z^\nu} \quad \text{and} \quad \nabla^2 \left( \frac{\partial}{\partial \overline{z}^\mu} \right) = R_{\alpha\overline{\beta}\overline{\mu}}^{\overline{\nu}} dz^\alpha \wedge d\overline{z}^\beta \otimes \frac{\partial}{\partial \overline{z}^\nu}.
\end{equation*}
The Ricci form of $(M, \omega)$ is then locally given by $\sqrt{-1} R_{\alpha\overline{\beta}\lambda}^\lambda dz^\alpha \wedge d\overline{z}^\beta$.\par
Recall that $(M, \omega)$ has the Weyl bundle $\mathcal{W} = \widehat{\operatorname{Sym}} T^\vee M_\mathbb{C}[[\hbar]]$. By abuse of notations, we denote by $\nabla$ the connection on $\mathcal{W}$ induced by the Levi-Civita connection. Note that on $\Omega^*(M, \mathcal{W})$,
\begin{equation*}
	\nabla^2 = - dz^\alpha \wedge d\overline{z}^\beta \otimes \left( R_{\alpha\overline{\beta}\mu}^\nu w^\mu \frac{\partial}{\partial w^\nu} + R_{\alpha\overline{\beta}\overline{\mu}}^{\overline{\nu}} \overline{w}^\mu \frac{\partial}{\partial \overline{w}^\nu} \right),
\end{equation*}
where we denote by $w^\mu$ (resp. $\overline{w}^\mu$) the covector $dz^\mu$ (resp. $d\overline{z}^\mu$) regarded as a section of $\mathcal{W}$.\par
In Subsection \ref{Subsection 5.1}, we will construct a (non-formal) deformation quantization of $(M, \omega)$ to realize the multiplication on $\operatorname{Hom}(\mathcal{B}_{\operatorname{cc}}, \mathcal{B}_{\operatorname{cc}})$. By Theorem \ref{Theorem 1.2}, it extends to a holomorphic deformation quantization of a complexification of $(M, \omega)$. In Subsection \ref{Subsection 5.2}, we will refine this extension when $(M, \omega)$ is real analytic K\"ahler. In Subsection \ref{Subsection 5.3}, we will construct a sheaf of left (resp. right) modules over the above deformation quantization to realize the composition in (\ref{Equation 2.1}) involving $\operatorname{Hom}(\mathcal{B}, \mathcal{B}_{\operatorname{cc}})$ (resp. in (\ref{Equation 2.2}) involving $\operatorname{Hom}(\mathcal{B}_{\operatorname{cc}}, \mathcal{B})$). In Subsection \ref{Subsection 5.4}, we will discuss the relationship among the above constructions and their counterparts for another K\"ahler manifold $(\overline{M}, -\omega)$. In Subsection \ref{Subsection 5.5}, we will construct a sheaf of bimodules on $M$ to realize the compositions in (\ref{Equation 2.3}) and (\ref{Equation 2.4}) involving $\operatorname{Hom}(\overline{\mathcal{B}}_{\operatorname{cc}}, \mathcal{B}_{\operatorname{cc}})$.

\subsection{A sheaf of algebras $\mathcal{O}_{\operatorname{qu}}^{(k)}$ realizing $\operatorname{Hom}(\mathcal{B}_{\operatorname{cc}}^{(k)}, \mathcal{B}_{\operatorname{cc}}^{(k)})$}
\label{Subsection 5.1}
\quad\par
We will first construct a particular deformation quantization using `polarized' structures on the Weyl bundle $\mathcal{W}$ of the K\"ahler manifold $(M, \omega)$. Then we will show how this gives rise to sheaves of twisted differential operators on $M$ (see Definition \ref{Definition 5.6}).\par
With respect to the underlying complex structure, we can define $\mathcal{W}^{1, 0} = \widehat{\operatorname{Sym}} T^{\vee (1, 0)}M [[\hbar]]$. There are $\mathcal{C}^\infty(M, \mathbb{C})[[\hbar]]$-linear operators $\delta^{1, 0}, (\delta^{1, 0})^{-1}, \pi_{0, *}$ on $\Omega^*(M, \mathcal{W})$ defined as follows: for a local section $a = w^{\mu_1} \cdots w^{\mu_l} \overline{w}^{\nu_1} \cdots \overline{w}^{\nu_m} dz^{\alpha_1} \wedge \cdots \wedge dz^{\alpha_p} \wedge d\overline{z}^{\beta_1} \wedge \cdots \wedge d\overline{z}^{\beta_q}$,
\begin{align*}
	\delta^{1, 0} a = dz^\mu \wedge \frac{\partial a}{\partial w^\mu},\quad
	(\delta^{1, 0})^{-1} a =
	\begin{cases}
		\frac{1}{l+p} w^\mu \iota_{\partial_{z^\mu}} a & \text{ if } l + p > 0;\\
		0 & \text{ if } l + p = 0,
	\end{cases}
	\quad \pi_{0, *}(a) =
	\begin{cases}
		0 & \text{ if } l + p > 0;\\
		a & \text{ if } l + p = 0,
	\end{cases}
\end{align*}
and the equality $\operatorname{Id} - \pi_{0, *} = \delta^{1, 0} \circ (\delta^{1, 0})^{-1} + (\delta^{1, 0})^{-1} \circ \delta^{1, 0}$ holds on $\Omega^*(M, \mathcal{W})$. Clearly, we can define the antiholomorphic counterparts $\mathcal{W}^{0, 1}$, $\delta^{0, 1}$, $(\delta^{0, 1})^{-1}$, $\pi_{*, 0}$ of $\mathcal{W}^{1, 0}$, $\delta^{1, 0}$, $(\delta^{1, 0})^{-1}$, $\pi_{0, *}$ respectively.\par
Instead of the fibrewise Moyal product $\star_{\operatorname{M}}$, we equip $\Omega^*(M, \mathcal{W})$ with the \emph{fibrewise anti-Wick product} $\star$ defined as follows \footnote{A better notation might be $\star_{\overline{\operatorname{W}}}$, but we prefer $\star$ for the ease of notations.} (c.f., for instance, \cite{BorWal1997, Neu2003}). For $a, b \in \Omega^*(M, \mathcal{W})$,
\begin{equation}
	a \star b := \sum_{r=0}^\infty \frac{\hbar^r}{r!} \omega^{\overline{\nu}_1\mu_1} \cdots \omega^{\overline{\nu}_r\mu_r} \frac{\partial^r a}{\partial \overline{w}^{\nu_1} \cdots \partial \overline{w}^{\nu_r}} \wedge \frac{\partial^r b}{\partial w^{\mu_1} \cdots \partial w^{\mu_r}}.
\end{equation}
It gives a natural ordering of holomorphic and anti-holomorphic variables for our construction of a deformation quantization which gives rise to sheaves of twisted differential operators. 

\begin{remark}
	In \cite{ChaLeuLi2023}, however, the fibrewise Wick product was used to construct a `sheafification' of the asymptotic action of the Berezin-Toeplitz deformation quantization of $(M, \omega)$ on the quantum Hilbert space. The deformation quantization to be constructed in this subsection is equivalent to the metaplectic corrected Berezin-Toeplitz deformation quantization. An equivalence between them is supposed to be given by the formal Berezin transform \cite{KarSch2001} with metaplectic correction.
\end{remark}

As mentioned in Subsection \ref{Subsection 3.2}, there exists a unique element $\tilde{\omega} \in \Omega^1(M, T^\vee M)$ such that $\delta = \tfrac{1}{\hbar}[\tilde{\omega}, \quad]_{\star_{\operatorname{M}}}$. It can now be expressed as $\tilde{\omega} = -(\delta^{0, 1})^{-1} \omega - (\delta^{1, 0})^{-1} \omega$, and it satisfies the condition $\delta = \tfrac{1}{\hbar}[\tilde{\omega}, \quad]_\star$, where $[\quad, \quad]_\star$ is the graded commutator of the fibrewise anti-Wick product. Also, there is a unique element $R \in \Omega^2(M, \operatorname{Sym}^2 T^\vee M)$ such that $\nabla^2 = \tfrac{1}{\hbar}[R, \quad]_{\star_{\operatorname{M}}}$ on $\Omega^*(M, \mathcal{W})$, which can be expressed locally in terms of K\"ahler coordinates: $R = -\omega_{\eta\overline{\nu}} R_{\alpha\overline{\beta}\mu}^\eta dz^\alpha \wedge d\overline{z}^\beta \otimes w^\mu \overline{w}^\nu$. We can show that $\nabla^2 = \tfrac{1}{\hbar}[R, \quad]_\star$ on $\Omega^*(M, \mathcal{W})$.\par
In order to obtain the desired deformation quantization of $(M, \omega)$, we define a closed $(1, 1)$-form
\begin{equation}
	\omega_1 = -\tfrac{\sqrt{-1}}{2} R_{\alpha\overline{\beta}\eta}^\eta dz^\alpha \wedge d\overline{z}^\beta.
\end{equation}

\begin{remark}
	\label{Remark 5.2}
	In local coordinates $(z^1, ..., z^n)$ defined on an open subset $U$ of $M$, we can define a local function $\rho_1 = -\tfrac{1}{2} \log \det (-\sqrt{-1} \omega_{\alpha\overline{\beta}})$, such that $\omega_1 \vert_U = -\sqrt{-1} \partial \overline{\partial} \rho_1$. By Jacobi's formula and basic identities in K\"ahler geometry,
	\begin{equation}
		\label{Equation 5.3}
		-2 \frac{\partial\rho_1}{\partial z^\alpha} = \omega^{\overline{\nu}\mu} \frac{\partial \omega_{\mu{\overline{\nu}}}}{\partial z^\alpha} = \omega^{\overline{\nu}\mu} \frac{\partial \omega_{\alpha{\overline{\nu}}}}{\partial z^\mu}.
	\end{equation}
\end{remark}

By (a variant of) Theorem 2.17 in \cite{ChaLeuLi2022b},
\begin{equation}
	\label{Equation 5.4}
	\gamma := (\delta^{0, 1})^{-1} \omega + (\delta^{1, 0})^{-1} \omega + A + \hbar B
\end{equation}
is a solution in $\Omega^1(M, \mathcal{W})$ to the equation
\begin{equation}
	R + \nabla \gamma + \tfrac{1}{2\hbar} [\gamma, \gamma]_\star = -\left( \omega + \tfrac{\hbar}{\sqrt{-1}} \omega_1 \right),
\end{equation}
where $A = \sum_{r=2}^\infty A_{(r)}$ and $B = \sum_{r=1}^\infty B_{(r)}$ with
\begin{itemize}
	\item $A_{(r)} = (\tilde{\nabla}^{1, 0})^{r-2} (\delta^{1, 0})^{-1}(R) \in \Omega^{0, 1}(M, \operatorname{Sym}^r T^{\vee (1, 0)}M \otimes T^{\vee (0, 1)}M)$ for $r \geq 2$; and
	\item $B_{(r)} = (\tilde{\nabla}^{1, 0})^{r-1} (\delta^{1, 0})^{-1}\left( \frac{1}{\sqrt{-1}} \omega_1 \right) \in \Omega^{0, 1}(M, \operatorname{Sym}^r T^{\vee (1, 0)}M)$ for $r \geq 1$.
\end{itemize}
In the above definitions, $\tilde{\nabla}^{1, 0} = (\delta^{1, 0})^{-1} \circ \nabla^{1, 0}$.

\begin{remark}
	For $r \geq 2$, we write $A_{(r)}$ locally as $A_{(r)} = \omega_{\eta\overline{\nu}} F_{\mu_1, ..., \mu_r, \overline{\beta}}^\eta d\overline{z}^\beta \otimes w^{\mu_1} \cdots w^{\mu_r} \overline{w}^\nu$. Then by Lemma B.1 in \cite{ChaLeuLi2021} (or Lemma A.1 in \cite{ChaLeuLi2023}), $2 B_{(r-1)} = r F_{\eta, \mu_2, ..., \mu_r, \overline{\beta}}^\eta d\overline{z}^\beta \otimes w^{\mu_2} \cdots w^{\mu_r}$.
\end{remark}

The solution $\gamma$ deforms $\nabla$ to a flat connection
\begin{equation}
	D := \nabla + \frac{1}{\hbar} [\gamma, \quad]_\star = \nabla - \delta + \frac{1}{\hbar} [A + \hbar B, \quad]_\star
\end{equation}
on $\mathcal{W}$. Functions $f \in \mathcal{C}^\infty(M, \mathbb{C})$ are in bijection with $D$-flat sections $O_f$ of $\mathcal{W}$, where $O_f$ is uniquely determined by the condition that $\pi_0(O_f) = f$. Via this bijection, the fibrewise anti-Wick product induces a star product with separation of variables (in the sense of \cite{Kar1996}) on $\mathcal{C}^\infty(M, \mathbb{C})[[\hbar]]$, still denoted by $\star$ by abuse of notations.

\begin{example}
	\label{Example 5.4}
	We will state explicit formulae of $O_f$ for two important kinds of (local) functions.
	\begin{enumerate}
		\item Let $f$ be a holomorphic function on $M$. Then by Example 2.14 in \cite{ChaLeuLi2023},
		\begin{equation}
			O_f = \sum_{r=0}^\infty (\tilde{\nabla}^{1, 0})^r f.
		\end{equation}
		Hence, $O_f$ is a section of $\mathcal{W}_{\operatorname{cl}}^{1, 0} := \widehat{\operatorname{Sym}} T^{\vee (1, 0)}M$.
		\item Suppose $U$ is an open subset of $M$ on which local coordinates $(z^1, ..., z^n)$ are defined and $\omega$ admits a potential $\rho_0$, i.e. $\omega \vert_U = -\sqrt{-1} \partial \overline{\partial} \rho_0$. Define $\rho_1$ as in Remark \ref{Remark 5.2} and define
		\begin{align*}
			\rho = -\sqrt{-1} \left( \rho_0 + \tfrac{\hbar}{\sqrt{-1}} \rho_1 \right).
		\end{align*}
		Then by the proof of Proposition 2.15 in \cite{ChaLeuLi2023}, for a holomorphic vector field $\xi$ on $U$,
		\begin{equation}
			O_{\xi(\rho)} = \sum_{r=0}^\infty (\tilde{\nabla}^{1, 0})^r (\xi(\rho) + \widehat{\xi}),
		\end{equation}
		where $\widehat{\xi} := (\delta^{0, 1})^{-1}(\iota_\xi\omega))$. Hence, $O_{\xi(\rho)}$ is a local section of $\mathcal{W}_{\operatorname{cl}}^{1, 0} \otimes \operatorname{Sym}^{\leq 1} T^{\vee (0, 1)}M[\hbar]$.
	\end{enumerate}
\end{example}

Thus, in Example \ref{Example 5.4}, $O_f$ and $O_{\xi(\rho)}$ are (local) sections of the algebra subbundle
\begin{equation*}
	\underline{\mathcal{W}} := \mathcal{W}_{\operatorname{cl}}^{1, 0} \otimes \operatorname{Sym} T^{\vee (0, 1)}M [\hbar]
\end{equation*}
of $(\mathcal{W}, \star)$. Note that $\gamma \in \Omega^1(M, \underline{\mathcal{W}})$ and $[\quad, \quad]_\star$ preserves $\underline{\mathcal{W}}$. This enables us to evaluate $D$ at $\hbar = \tfrac{\sqrt{-1}}{k}$ for any non-zero complex number $k$ without convergence issues and obtain a non-formal flat connection $D_k = \nabla + \frac{k}{\sqrt{-1}} [\gamma_k, \quad]_{\star_k}$ on $\underline{\mathcal{W}}_{\operatorname{cl}} := \widehat{\operatorname{Sym}} T^{\vee (1, 0)}M \otimes \operatorname{Sym} T^{\vee (0, 1)}M$, where we add a subscript $k$ in a symbol to denote its evaluation at $\hbar = \tfrac{\sqrt{-1}}{k}$, so that $(\Omega^*(M, \underline{\mathcal{W}}_{\operatorname{cl}}), \star_k, D_k)$ forms a dga. We can now define quantizable functions.

\begin{definition}[Definition 2.20 in \cite{ChaLeuLi2023}]
	Let $k \in \mathbb{C}$ be non-zero. A \emph{(non-formal) quantizable function of level} $k$ is a $D_k$-flat section of $\underline{\mathcal{W}}_{\operatorname{cl}}$.
\end{definition}

The sheaf of quantizable functions of level $k$, denote by $\mathcal{O}_{\operatorname{qu}}^{(k)}$, is closed under $\star_k$. For a $D$-flat section $O$ of $\underline{\mathcal{W}}$, we denote by $\operatorname{ev}_k(O)$ its evaluation at $\hbar = \tfrac{\sqrt{-1}}{k}$. Clearly, $\operatorname{ev}_k(O)$ is $D_k$-flat.\par
For an open subset $U$ of $M$, let $\mathcal{O}_U$ be the sheaf of holomorphic functions on $U$, $\mathcal{D}_U$ be the sheaf of holomorphic differential operators on $U$ and $\iota_U: \mathcal{O}_U \hookrightarrow \mathcal{D}_U$ be the canonical embedding. We introduce the notion of sheaves of twisted differential operators, following Mili\v{c}i\'{c}'s notes \cite{Mil1994}. 

\begin{definition}
	\label{Definition 5.6}
	A \emph{sheaf of twisted differential operators} (\emph{TDO} in short) on $M$ is a pair $(\mathcal{D}, \iota)$, where $\mathcal{D}$ is a sheaf of $\mathbb{C}$-algebras with identity on $M$ and $\iota: \mathcal{O}_X \hookrightarrow \mathcal{D}$ is a morphism of sheaves of $\mathbb{C}$-algebras with identity, for which $M$ admits an open cover $\{U_i\}_{i \in I}$ such that for all $i \in I$, there is an isomorphism of sheaves of $\mathbb{C}$-algebras with identity $\psi_i: \mathcal{D} \vert_{U_i} \to \mathcal{D}_{U_i}$ such that $\psi_i \circ ( \iota \vert_{U_i} ) = \iota_{U_i}$.
\end{definition}

There is a morphism of $\mathbb{C}$-algebras with identity $\iota: \mathcal{O}_M \to \mathcal{O}_{\operatorname{qu}}^{(k)}$ given by $f \mapsto O_f$.

\begin{theorem}[Theorem 3.6 in \cite{ChaLeuLi2023}]
	\label{Theorem 5.7}
	Let $k \in \mathbb{C}$ be non-zero. Then the pair $(\mathcal{O}_{\operatorname{qu}}^{(k)}, \iota)$ forms a TDO on the underlying complex manifold $M$. 
\end{theorem}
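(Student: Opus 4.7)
The plan is to reduce to a local model on a coordinate chart, where the K\"ahler form admits a potential, and then explicitly construct the required isomorphism $\psi_i : \mathcal{O}_{\operatorname{qu}}^{(k)}\vert_{U_i} \to \mathcal{D}_{U_i}$. Fix a point $p \in M$ and choose a contractible open neighbourhood $U$ of $p$ carrying local holomorphic coordinates $(z^1, \ldots, z^n)$ on which $\omega$ admits a K\"ahler potential $\rho_0$, i.e. $\omega\vert_U = -\sqrt{-1}\,\partial\overline{\partial}\rho_0$. Form $\rho_1$ as in Remark \ref{Remark 5.2} and $\rho = -\sqrt{-1}(\rho_0 + \tfrac{\hbar}{\sqrt{-1}}\rho_1)$, and evaluate everything at $\hbar = \tfrac{\sqrt{-1}}{k}$ to get $\rho_k$.

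First I would identify a generating set. For each holomorphic function $f \in \mathcal{O}_U$ we have the flat section $\operatorname{ev}_k(O_f)$, and for each coordinate vector field $\partial_{z^\alpha}$ the flat section $\operatorname{ev}_k(O_{\partial_{z^\alpha}(\rho)})$. By Example \ref{Example 5.4}, $\operatorname{ev}_k(O_f)$ lies in $\widehat{\operatorname{Sym}}T^{\vee(1,0)}M$ (no antiholomorphic symbols), while $\operatorname{ev}_k(O_{\partial_{z^\alpha}(\rho)})$ lies in $\widehat{\operatorname{Sym}}T^{\vee(1,0)}M \otimes \operatorname{Sym}^{\leq 1}T^{\vee(0,1)}M$, i.e. it is linear in $\overline{w}$. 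I would then define $\psi_U$ on generators by
\begin{equation*}
\psi_U\bigl(\operatorname{ev}_k(O_f)\bigr) = f\cdot, \qquad \psi_U\bigl(\operatorname{ev}_k(O_{\partial_{z^\alpha}(\rho)})\bigr) = \tfrac{\sqrt{-1}}{k}\partial_{z^\alpha} + \tfrac{\partial\rho_0}{\partial z^\alpha} - \tfrac{\sqrt{-1}}{k}\tfrac{\partial\rho_1}{\partial z^\alpha},
\end{equation*}
the latter being the natural first-order operator on $\mathcal{O}_U$ that arises by pairing the unique $\overline{w}^\beta$ with $\omega^{\overline{\beta}\alpha}\partial_{z^\alpha}$ under $\star_k$.

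The heart of the proof is then a generation-and-uniqueness statement: every $D_k$-flat section $O \in \mathcal{O}_{\operatorname{qu}}^{(k)}(U)$ is uniquely a finite $\star_k$-polynomial (of total degree equal to its antiholomorphic symbol degree) in the $\operatorname{ev}_k(O_{\partial_{z^\alpha}(\rho)})$'s with coefficients of the form $\operatorname{ev}_k(O_f)$ with $f\in\mathcal{O}_U$. The key point that makes this finite is exactly that $\mathcal{O}_{\operatorname{qu}}^{(k)}$ is built from $\operatorname{Sym}\,T^{\vee(0,1)}M$ (polynomials, not completed symmetric powers) in the antiholomorphic variables, so each element has a well-defined top antiholomorphic degree. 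Given $O$ of antiholomorphic degree $N$, I would extract its leading antiholomorphic symbol $\pi_{*,0}^\perp O$, which is a section of $\mathcal{W}_{\operatorname{cl}}^{1,0}\otimes\operatorname{Sym}^N T^{\vee(0,1)}M$, use $D_k$-flatness together with the explicit shape of $\gamma_k$ to argue that this symbol is polynomial in $\overline{w}^\beta$ with holomorphic-function coefficients, subtract off the corresponding $\star_k$-monomial in $\operatorname{ev}_k(O_{\partial_{z^\alpha}(\rho)})$'s to lower the degree, and induct. This gives an inverse to $\psi_U$, hence $\psi_U$ is an isomorphism of sheaves of $\mathbb{C}$-algebras, and it satisfies $\psi_U\circ (\iota\vert_U) = \iota_U$ by construction.

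The main obstacle I expect is the induction step: showing that the leading antiholomorphic symbol of a $D_k$-flat section really is polynomial in $\overline{w}$ with holomorphic coefficients, and that the subtraction of a $\star_k$-polynomial in the $\operatorname{ev}_k(O_{\partial_{z^\alpha}(\rho)})$'s genuinely strictly lowers the degree (as opposed to generating lower-order tails that spoil the induction). Controlling this requires a careful bookkeeping of the $[A+\hbar B,\cdot]_\star$ contribution in $D_k$, using that $A \in \Omega^{0,1}(M,\mathcal{W}_{(2)})$ and $B\in \Omega^{0,1}(M,\mathcal{W})$ both annihilate $\partial_{\overline{w}}$, so the only way $D_k$-flatness can constrain the leading antiholomorphic symbol is through $\nabla - \delta^{1,0}$ plus holomorphic derivatives, matching the classical Leibniz rule for $\mathcal{D}_U$. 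Covering $M$ by such charts and checking that the locally defined $\psi_{U_i}$ exhibit $\mathcal{O}_{\operatorname{qu}}^{(k)}$ as a TDO in the sense of Definition \ref{Definition 5.6} then finishes the argument, exactly as in Theorem 3.6 of \cite{ChaLeuLi2023}, the only modifications being the use of the fibrewise anti-Wick product $\star$ and the metaplectic shift by $\rho_1$.
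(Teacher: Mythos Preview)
Your overall strategy is right and is essentially the one the paper sketches: work on a chart $U$ with a K\"ahler potential, use the $D_k$-flat sections $O_f$ and $O_{\xi(\rho)}$ as generators, and run an induction on the antiholomorphic symbol degree. The paper packages this slightly differently by building the map in the \emph{opposite} direction, $\tilde{\psi}: \mathcal{D}_U \to \mathcal{O}_{\operatorname{qu}}^{(k)}\vert_U$, sending $f \circ \xi_1 \circ \cdots \circ \xi_n \mapsto \operatorname{ev}_k(\hbar^{-n} O_f \star O_{\xi_1(\rho)} \star \cdots \star O_{\xi_n(\rho)})$, and checking well-definedness via the two commutator identities $\tfrac{1}{\hbar}[O_{\xi(\rho)}, O_f]_\star = O_{\xi(f)}$ and $\tfrac{1}{\hbar}[O_{\xi_1(\rho)}, O_{\xi_2(\rho)}]_\star = O_{[\xi_1,\xi_2](\rho)}$. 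This direction is cleaner: well-definedness is immediate from relations in $\mathcal{D}_U$, and your inductive generation argument then becomes precisely the surjectivity step. Going your way you must first prove generation and relations before $\psi_U$ is even defined.

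There is, however, a genuine slip in your formula
\[
\psi_U\bigl(\operatorname{ev}_k(O_{\partial_{z^\alpha}(\rho)})\bigr) \;=\; \tfrac{\sqrt{-1}}{k}\partial_{z^\alpha} + \tfrac{\partial\rho_0}{\partial z^\alpha} - \tfrac{\sqrt{-1}}{k}\tfrac{\partial\rho_1}{\partial z^\alpha}.
\]
The target $\mathcal{D}_U$ consists of \emph{holomorphic} differential operators, so its zeroth-order part must be multiplication by a holomorphic function; but $\partial_{z^\alpha}\rho_0$ and $\partial_{z^\alpha}\rho_1$ are not holomorphic. You appear to be conflating the abstract TDO isomorphism with the later module action on $\mathcal{L}^{(k)}$ (where such connection terms legitimately appear). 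The correct assignment, and the one inverse to the paper's $\tilde{\psi}$, is simply $\psi_U(\operatorname{ev}_k(O_{\partial_{z^\alpha}(\rho)})) = \tfrac{\sqrt{-1}}{k}\partial_{z^\alpha}$; with that fix your argument goes through.
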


Note that the definition of TDOs in \cite{ChaLeuLi2023}, which follows Ginzburg's note \cite{Gin1998}, is slightly different from Definition \ref{Definition 5.6}. Our notion of TDOs given by Definition \ref{Definition 5.6} is known as \emph{locally trivial} TDOs in \cite{Gin1998}. Theorem \ref{Theorem 5.7} is still valid in spite of this slight difference of definitions. We will not duplicate the complete proof of Theorem 3.6 from \cite{ChaLeuLi2023}, but will only outline how to construct local isomorphisms between $\mathcal{O}_{\operatorname{qu}}^{(k)}$ and $\mathcal{D}_X$ and state the characteristic class of $(\mathcal{O}_{\operatorname{qu}}^{(k)}, \iota)$.

\begin{lemma}
	Let $U$ be the open subset of $M$ and $\rho$ be the function in Example \ref{Example 5.4}.
	\begin{enumerate}
		\item Let $f$ be a holomorphic function and $\xi$ be a holomorphic vector field on $U$. Then
		\begin{equation*}
			\tfrac{1}{\hbar} \left[ O_{\xi(\rho)}, O_f \right]_\star = O_{\xi(f)}.
		\end{equation*}
		\item Let $\xi_1, \xi_2$ be holomorphic vector fields on $U$. Then
		\begin{equation*}
			\tfrac{1}{\hbar} [O_{\xi_1(\rho)}, O_{\xi_2(\rho)}]_\star = O_{[\xi_1, \xi_2](\rho)}.
		\end{equation*}
	\end{enumerate}
\end{lemma}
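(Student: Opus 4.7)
The plan is to reduce both identities to comparisons of $\pi_0$-projections via the Fedosov bijection $f \leftrightarrow O_f$ between smooth functions on $M$ and $D$-flat sections of $\mathcal{W}$. Since $D = \nabla + \tfrac{1}{\hbar}[\gamma, \,\cdot\,]_\star$ is a derivation with respect to $\star$ (the Levi-Civita connection $\nabla$ preserves the K\"ahler form $\omega$ and hence $\star$, while $\tfrac{1}{\hbar}[\gamma, \,\cdot\,]_\star$ is manifestly an inner derivation), the $\star$-commutator of two $D$-flat sections is again $D$-flat. Hence, for both parts, it suffices to show that the constant (i.e.\ $w$- and $\overline{w}$-free) parts of the respective commutators agree with $\xi(f) = \pi_0(O_{\xi(f)})$ and $[\xi_1, \xi_2](\rho) = \pi_0(O_{[\xi_1,\xi_2](\rho)})$.

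The decisive input from Example \ref{Example 5.4} is a sharp bound on $\overline{w}$-degrees: $O_f \in \Gamma(\mathcal{W}_{\operatorname{cl}}^{1,0})$ carries no $\overline{w}$-factor, while $O_{\xi(\rho)}$ has $\overline{w}$-degree at most $1$, concentrated only in the leading summand $\widehat{\xi} = \xi^\alpha \omega_{\alpha\overline{\beta}} \overline{w}^\beta$. Consequently, in the anti-Wick expansion only the $s = 0, 1$ terms of $O_{\xi(\rho)} \star O_f$ can be non-zero, and only the $s=0$ term of $O_f \star O_{\xi(\rho)}$ contributes (since $\partial_{\overline{w}} O_f = 0$); the two pointwise contributions cancel in the commutator, leaving
\begin{equation*}
    \tfrac{1}{\hbar}[O_{\xi(\rho)}, O_f]_\star = \omega^{\overline{\nu}\mu} \, \partial_{\overline{w}^\nu} O_{\xi(\rho)} \cdot \partial_{w^\mu} O_f.
\end{equation*}
From $\tilde{\nabla}^{1,0} f = \partial_\mu f \cdot w^\mu$ one reads $\partial_{w^\mu} O_f |_{w = \overline{w} = 0} = \partial_\mu f$, and from $\widehat{\xi}$ one reads $\partial_{\overline{w}^\nu} O_{\xi(\rho)}|_{w = \overline{w} = 0} = \xi^\alpha \omega_{\alpha\overline{\nu}}$; the contraction $\omega^{\overline{\nu}\mu} \xi^\alpha \omega_{\alpha\overline{\nu}} \partial_\mu f = \xi^\alpha \partial_\alpha f = \xi(f)$ then yields (1).

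For (2), both operands have $\overline{w}$-degree at most $1$, so both star products truncate after $s = 1$; the pointwise $s=0$ terms cancel in the commutator, and one gets
\begin{equation*}
    \tfrac{1}{\hbar}[O_{\xi_1(\rho)}, O_{\xi_2(\rho)}]_\star = \omega^{\overline{\nu}\mu} \bigl( \partial_{\overline{w}^\nu} O_{\xi_1(\rho)} \cdot \partial_{w^\mu} O_{\xi_2(\rho)} - \partial_{\overline{w}^\nu} O_{\xi_2(\rho)} \cdot \partial_{w^\mu} O_{\xi_1(\rho)} \bigr).
\end{equation*}
The only contribution to $\partial_{w^\mu} O_{\xi_i(\rho)}$ at $w = \overline{w} = 0$ is from $\tilde{\nabla}^{1,0}(\xi_i(\rho)) = \partial_\mu \xi_i(\rho) \cdot w^\mu$, since the corresponding $w$-derivative of $\tilde{\nabla}^{1,0}(\widehat{\xi_i})$ still carries a $\overline{w}$ and is annihilated by $\pi_0$. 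Applying $\pi_0$ and contracting then yields $\xi_1^\alpha \partial_\alpha \xi_2(\rho) - \xi_2^\alpha \partial_\alpha \xi_1(\rho) = [\xi_1, \xi_2](\rho)$, establishing (2).

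The only genuine subtlety is careful bookkeeping of the $\overline{w}$-bidegree to see that the anti-Wick expansions terminate and that only the exhibited ``leading'' contributions to the $\pi_0$-projections survive; once this is in place, the identities reduce to routine contractions of $\omega$ with its inverse. There is no truly hard step, as the derivation property of $D$ with respect to $\star$ is built into the Fedosov construction.
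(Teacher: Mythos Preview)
Your proof is correct and follows essentially the same approach as the paper: reduce to comparing $\pi_0$-projections via the bijection between functions and $D$-flat sections, then use the bounded $\overline{w}$-degree of $O_f$ and $O_{\xi(\rho)}$ from Example \ref{Example 5.4} to truncate the anti-Wick expansion and compute the surviving contractions. The only cosmetic difference is that the paper computes $\pi_0(O_{\xi_1(\rho)} \star O_{\xi_2(\rho)})$ first and then antisymmetrizes, while you antisymmetrize first and then apply $\pi_0$; the underlying bookkeeping is identical.
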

\begin{proof}
	We only need to compute $\pi_0\left( \tfrac{1}{\hbar} \left[ O_{\xi(\rho)}, O_f \right]_\star \right)$ and $\pi_0\left( \tfrac{1}{\hbar} [O_{\xi_1(\rho)}, O_{\xi_2(\rho)}]_\star \right)$.
	\begin{enumerate}
		\item Write $\xi = \xi^\mu \partial_{z^\mu}$. Then $\widehat{\xi} = \xi^\eta \omega_{\eta\overline{\nu}} \overline{w}^\nu$, whence
		\begin{align*}
			\pi_0\left( \tfrac{1}{\hbar} \left[ O_{\xi(\rho)}, O_f \right]_\star \right) = \omega^{\overline{\nu}\mu} \frac{\partial \widehat{\xi}}{\partial \overline{w}^\nu} \frac{\partial \tilde{\nabla}^{1, 0} f}{\partial w^\mu} = \omega^{\overline{\nu}\mu} \xi^\eta \omega_{\eta\overline{\nu}} \frac{\partial f}{\partial z^\mu} = \xi^\mu \frac{\partial f}{\partial z^\mu} = \xi(f).
		\end{align*}
		Therefore, $\tfrac{1}{\hbar} \left[ O_{\xi(\rho)}, O_f \right]_\star = O_{\xi(f)}$.
		\item We see that $\pi_0\left( O_{\xi_1(\rho)} \star O_{\xi_2(\rho)} \right)$ is equal to
		\begin{equation*}
			 \xi_1(\rho) \xi_2(\rho) + \hbar \omega^{\overline{\nu}\mu} \frac{\partial \widehat{\xi_1}}{\partial \overline{w}^\nu} \frac{\partial \tilde{\nabla}^{1, 0} ( \xi_2(\rho) )}{\partial w^\mu} = \xi_1(\rho) \xi_2(\rho) + \hbar \xi_1(\xi_2(\rho)).
		\end{equation*}
		Therefore, $[O_{\xi_1(\rho)}, O_{\xi_2(\rho)}]_\star = \hbar O_{[\xi_1, \xi_2](\rho)}$.
	\end{enumerate}
\end{proof}

Then we can well define a morphism of sheaves of $\mathbb{C}$-algebras with identity $\tilde{\psi}: \mathcal{D}_U \to \mathcal{O}_{\operatorname{qu}}^{(k)} \vert_U$ compatible with $\iota$ as follows. Denote by $\circ$ the multiplication on $\mathcal{D}_U$. For a holomorphic function $f$ on $U$ and holomorphic vector fields $\xi_1, ..., \xi_n$ on $U$, we define
\begin{equation*}
	\tilde{\psi}( f \circ \xi_1 \circ \cdots \circ \xi_n ) = \operatorname{ev}_k \left( \hbar^{-n} O_f \star O_{\xi_1(\rho)} \star \cdots \star O_{\xi_n(\rho)} \right).
\end{equation*}
It is indeed an isomorphism $\mathcal{D}_U \cong \mathcal{O}_{\operatorname{qu}}^{(k)} \vert_U$.\par
Recall from Theorem 1.3 in \cite{Mil1994} that there is a canonical bijection between isomorphism classes of TDOs on $M$ and elements of the first sheaf cohomology $H^1(M, \Omega_{M, \operatorname{cl}}^1)$ of $M$ with coefficients in the sheaf of $\partial$-closed holomorphic $(1, 0)$-forms on $M$. For a TDO $\mathcal{D}$ on $M$, the element in $H^1(M, \Omega_{M, \operatorname{cl}}^1)$ associated to the isomorphism class $[\mathcal{D}]$ is called the \emph{characteristic class} of $\mathcal{D}$. We can show that the characteristic class of the TDO $\mathcal{O}_{\operatorname{qu}}^{(k)}$ is represented by the \v{C}ech cocycle
\begin{equation}
	\omega_{ij}^{\mathcal{O}_{\operatorname{qu}}^{(k)}} = \partial (k \rho_{0, j} + \rho_{1, j} - k \rho_{0, i} - \rho_{1, i}),
\end{equation}
for an open cover $\{U_i\}_{i \in I}$ on $M$ with respective potentials $\rho_{0, i}, \rho_{1, i}$ of $\omega, \omega_1$ on $U_i$ for all $i \in I$.\par
We end this subsection by a discussion on Fedosov's characteristic class of $(\mathcal{C}^\infty(M, \mathbb{C})[[\hbar]], \star)$ and the extension problem for $(\mathcal{C}^\infty(M, \mathbb{C})[[\hbar]], \star)$. One way to find Fedosov's characteristic class of $(\mathcal{C}^\infty(M, \mathbb{C})[[\hbar]], \star)$ is to compute the corresponding Karabegov form \cite{Kar1998}. Alternatively, it was shown in \cite{BorWal1997, NeuWal2003} that the operator
\begin{equation}
	\tilde{\Delta} = \omega^{\overline{\nu}\mu} \frac{\partial^2}{\partial w^\mu \partial \overline{w}^\nu}
\end{equation}
is globally defined on $\Omega^*(M, \mathcal{W})$ and commutes with $\nabla$ and $\delta$. Moreover,
\begin{equation*}
	e^{-\frac{\hbar}{2} \tilde{\Delta}}: (\Omega^*(M, \mathcal{W}), \star) \to (\Omega^*(M, \mathcal{W}), \star_{\operatorname{M}})
\end{equation*}
is an isomorphism of algebra bundles. Observe that $\tilde{\Delta} R = - R_{\alpha\overline{\beta}\eta}^\eta dz^\alpha \wedge d\overline{z}^\beta = \tfrac{2}{\sqrt{-1}} \omega_1$ and for $r \geq 2$, $\tilde{\Delta} A_{(r)} = r \omega^{\overline{\nu}\mu_1} \omega_{\eta\overline{\nu}} F_{\mu_1, ..., \mu_r, \overline{\beta}}^\eta d\overline{z}^\beta \otimes w^{\mu_1} \cdots w^{\mu_r} = 2B_{(r-1)}$. Therefore,
\begin{equation*}
	e^{-\frac{\hbar}{2} \tilde{\Delta}} ( A + \hbar B ) = A \quad \text{and} \quad e^{-\frac{\hbar}{2} \tilde{\Delta}} R = R - \tfrac{\hbar}{\sqrt{-1}} \omega_1.
\end{equation*}
We then see that $\gamma_{\operatorname{M}} := -\tilde{\omega} + A$ is a solution to (\ref{Equation 3.2}) with $\omega_\hbar = 0$, and $\pi_0(s) = 0$, where $s := \delta^{-1} A \in \Omega^0(M, \mathcal{W}_{(4)})$ is in general non-zero. Denote by the same symbol $\star_{\operatorname{M}}$ the star product on $\mathcal{C}^\infty(M, \mathbb{C})[[\hbar]]$ induced by the fibrewise Moyal product with respect to the solution $\gamma_{\operatorname{M}}$ to (\ref{Equation 3.2}). Since $e^{-\frac{\hbar}{2} \tilde{\Delta}} \circ D = D_{\operatorname{M}} \circ e^{-\frac{\hbar}{2} \tilde{\Delta}}$, where $D_{\operatorname{M}} := \nabla + \tfrac{1}{\hbar}[\gamma_{\operatorname{M}}, \quad]_{\star_{\operatorname{M}}}$, $e^{-\frac{\hbar}{2} \tilde{\Delta}}$ induces an equivalence
\begin{equation*}
	(\mathcal{C}^\infty(M, \mathbb{C})[[\hbar]], \star) \to (\mathcal{C}^\infty(M, \mathbb{C})[[\hbar]], \star_{\operatorname{M}})
\end{equation*}
of deformation quantizations. In other words, the deformation quantization $(\mathcal{C}^\infty(M, \mathbb{C})[[\hbar]], \star)$ has Fedosov's characteristic class $-\tfrac{1}{\hbar}[\omega]$, and hence has an extension $(\mathcal{A}, \psi)$ on a complexification $(X, \Omega)$ of $(M, \omega)$ in the sense of Theorem \ref{Theorem 1.2}.

\subsection{Extension of $(\mathcal{C}^\infty(M, \mathbb{C})[[\hbar]], \star)$ in the real analytic K\"ahler case}
\label{Subsection 5.2}
\quad\par
Throughout this subsection, let $(M, \omega)$ be real analytic K\"ahler. As we have seen from Example \ref{Example 4.5}, we can pick a Stein complexification $\iota: (M, \omega) \hookrightarrow (X, \Omega)$ such that $X$ is a neighbourhood of the diagonal in $M \times \overline{M}$ and $\iota$ is the diagonal map. The kernel $\mathcal{P}^{\operatorname{c}}$ of $d\check{\pi}: T^{1, 0}X \to T^{0, 1}M$ (resp. the kernel $\check{\mathcal{P}}^{\operatorname{c}}: d\pi: T^{1, 0}X \to T^{1, 0}M$) is a holomorphic extension of $T^{1, 0}M$ (resp. $T^{0, 1}M$). Here, $\pi: X \to M$ and $\check{\pi}: X \to \overline{M}$ are the canonical projections.\par
We will show that we can extend $\star$ to a holomorphic star product $\widetilde{\star}$ which is `with separation of variables’ with respect to the decomposition $T^{1, 0}X = \mathcal{P}^{\operatorname{c}} \oplus \check{\mathcal{P}}^{\operatorname{c}}$. As the construction of $\widetilde{\star}$ follows almost verbatim from that of $\star$, we will only outline the main steps.\par
About every point in $X$, there are complex coordinates of the form
\begin{equation*}
	(z^1, ..., z^n, \check{z}^1, ..., \check{z}^n) = (\pi^*z_{(1)}^1, ..., \pi^*z_{(1)}^n, \check{\pi}^*\overline{z}_{(2)}^1, ..., \check{\pi}^*\overline{z}_{(2)}^n),
\end{equation*}
where $(z_{(1)}^1, ..., z_{(1)}^n)$ and $(z_{(2)}^1, ..., z_{(2)}^n)$ are complex coordinates on $M$. Then $( \tfrac{\partial}{\partial z^1}, ..., \tfrac{\partial}{\partial z^n} )$ and $( \tfrac{\partial}{\partial \check{z}^1}, ..., \tfrac{\partial}{\partial \check{z}^n} )$ are local holomorphic frames of $\mathcal{P}^{\operatorname{c}}$ and $\check{\mathcal{P}}^{\operatorname{c}}$ respectively. As $\mathcal{P}^{\operatorname{c}}, \check{\mathcal{P}}^{\operatorname{c}}$ are $\Omega$-Lagrangian and $T^{1, 0}X = \mathcal{P}^{\operatorname{c}} \oplus \check{\mathcal{P}}^{\operatorname{c}}$, we can write $\Omega = \Omega_{\alpha\check{\beta}} dz^\alpha \wedge d\check{z}^\beta$ locally and $(\Omega_{\alpha\check{\beta}})$ has an inverse $(\Omega^{\check{\alpha}\beta})$.\par
Define a fibrewise holomorphic star product $\widetilde{\star}$ on $\Omega_{\operatorname{hol}}^{*, 0}(X, \widetilde{\mathcal{W}})$ as follows. For $a, b \in \Omega_{\operatorname{hol}}^{*, 0}(X, \widetilde{\mathcal{W}})$,
\begin{equation*}
	a \widetilde{\star} b := \sum_{r=0}^\infty \frac{\hbar}{r!} \Omega^{\check{\nu}_1\mu_1} \cdots \Omega^{\check{\nu}_r\mu_r} \frac{\partial^r a}{\partial \check{w}^{\nu_1} \cdots \partial \check{w}^{\nu_r}} \wedge \frac{\partial^r b}{\partial w^{\mu_1} \cdots \partial w^{\mu_r}}.
\end{equation*}
Here, $w^\mu$ (resp. $\check{w}^\nu$) denotes $dz^\mu$ (resp. $d\check{z}^\nu$) as a local section of $\widetilde{\mathcal{W}} := \widehat{\operatorname{Sym}} T^{\vee (1, 0)}X[[\hbar]]$.\par
Note that $\nabla, R, \omega_1$ are all real analytic. Indeed, $X$ admits holomorphic extensions $\widetilde{\nabla}$, $\widetilde{R}$, $\Omega_1$ of $\nabla$, $R$, $\omega_1$ respectively such that $\widetilde{\nabla}$ is a holomorphic symplectic connection on $(X, \Omega)$ preserving $\mathcal{P}^{\operatorname{c}}$ and $\check{\mathcal{P}}^{\operatorname{c}}$ such that it restricts to holomorphic connections $\widetilde{\nabla}_{(1)}, \widetilde{\nabla}_{(2)}$ on $\mathcal{P}^{\operatorname{c}}, \check{\mathcal{P}}^{\operatorname{c}}$ respectively. Moreover, $\widetilde{\nabla}_{(1)}, \widetilde{\nabla}_{(2)}$ square to zero. In local coordinates, $\widetilde{\nabla}$ is given by
\begin{equation*}
	\widetilde{\nabla}\left( \frac{\partial}{\partial z^\beta} \right) = \widetilde{\Gamma}_{\alpha\beta}^\gamma dz^\alpha \otimes \frac{\partial}{\partial z^\gamma} \quad \text{and} \quad \widetilde{\nabla}\left( \frac{\partial}{\partial \check{z}^\beta} \right) = \widetilde{\Gamma}_{\check{\alpha}\check{\beta}}^{\check{\gamma}} d\check{z}^\alpha \otimes \frac{\partial}{\partial \check{z}^\gamma},
\end{equation*}
where $\widetilde{\Gamma}_{\alpha\beta}^\gamma = \Omega^{\check{\lambda}\gamma} \tfrac{\partial \Omega_{\beta\check{\lambda}}}{\partial z^\alpha}$ and $\widetilde{\Gamma}_{\check{\alpha}\check{\beta}}^{\check{\gamma}} = \Omega^{\check{\gamma}\lambda} \tfrac{\partial \Omega_{\lambda\check{\beta}}}{\partial \check{z}^\alpha}$. Also,
\begin{align*}
	\widetilde{R} = & -\Omega_{\eta\check{\nu}} \widetilde{R}_{\alpha\check{\beta}\mu}^\eta dz^\alpha \wedge d\check{z}^\beta \otimes w^\mu \check{w}^\nu,\\
	\Omega_1 = & -\tfrac{\sqrt{-1}}{2} \widetilde{R}_{\alpha\check{\beta}\eta}^\eta dz^\alpha \wedge d\check{z}^\beta,
\end{align*}
where $\widetilde{R}_{\alpha\check{\beta}\mu}^\nu = -\frac{\partial \widetilde{\Gamma}_{\alpha\mu}^\nu}{\partial \check{z}^\beta}$. We can easily check that $(\Omega_{\operatorname{hol}}^{*, 0}(X, \widetilde{\mathcal{W}}), \widetilde{R}, \widetilde{\nabla}, \widetilde{\star})$ is a curved dga. Recall that the decomposition $TM_\mathbb{C} = T^{1, 0}M \oplus T^{0, 1}M$ yields the operators $(\delta^{1, 0})^{-1}, (\delta^{0, 1})^{-1}$ on $\Omega^*(M, \mathcal{W})$. We can similarly define operators $\widetilde{\delta}_{(1)}^{-1}, \widetilde{\delta}_{(2)}^{-1}$ on $\Omega_{\operatorname{hol}}^{*, 0}(X, \widetilde{\mathcal{W}})$ with respect to the decomposition $T^{1, 0}X = \mathcal{P}^{\operatorname{c}} \oplus \check{\mathcal{P}}^{\operatorname{c}}$. From the formulae of $A$ and $B$ in Subsection \ref{Subsection 5.1}, it is not hard to see that we can construct their holomorphic extensions
\begin{equation*}
	\widetilde{A} \in \Gamma_{\operatorname{hol}}(X, (\mathcal{P}^{\operatorname{c}})^\vee \otimes \widehat{\operatorname{Sym}} (\check{\mathcal{P}}^{\operatorname{c}})^\vee \otimes (\mathcal{P}^{\operatorname{c}})^\vee) \quad \text{and} \quad \widetilde{B} \in \Gamma_{\operatorname{hol}}(X, (\mathcal{P}^{\operatorname{c}})^\vee \otimes \widehat{\operatorname{Sym}} (\check{\mathcal{P}}^{\operatorname{c}})^\vee)
\end{equation*}
via the operators $\widetilde{\delta}_{(1)}^{-1}, \widetilde{\nabla}_{(1)}$ and the holomorphic sections $\widetilde{R}, \Omega_1$. It implies that
\begin{equation*}
	\widetilde{\gamma} := \widetilde{\delta}_{(2)}^{-1}\Omega + \widetilde{\delta}_{(1)}^{-1}\Omega + \widetilde{A} + \hbar \widetilde{B} \in \Omega_{\operatorname{hol}}^{1, 0}(X, \widetilde{\mathcal{W}})
\end{equation*}
is a holomorphic extension of $\gamma$ given as in (\ref{Equation 5.4}) and $\widetilde{\gamma}$ has a polynomial growth in $\check{\mathcal{P}}^{\operatorname{c}}$. We then obtain a curved dga $(\Omega_{\operatorname{hol}}^{*, 0}(X, \widetilde{\mathcal{W}}), -(\Omega + \tfrac{\hbar}{\sqrt{-1}} \Omega_1), \widetilde{D}^{1, 0}, \widetilde{\star})$, where $\widetilde{D}^{1, 0} := \widetilde{\nabla} + \tfrac{1}{\hbar} [\widetilde{\gamma}, \quad]_{\widetilde{\star}}$. In particular, $(\widetilde{D}^{1, 0})^2 = 0$. Moreover, the following is a morphism of curved dgas:
\begin{equation*}
	\iota^*: (\Omega_{\operatorname{hol}}^{*, 0}(X, \widetilde{\mathcal{W}}), -(\Omega + \tfrac{\hbar}{\sqrt{-1}} \Omega_1), \widetilde{D}^{1, 0}, \widetilde{\star}) \to (\Omega^*(M, \mathcal{W}), -(\omega + \tfrac{\hbar}{\sqrt{-1}} \omega_1), D, \star).
\end{equation*}
For any open subset $U$ of $X$ and $f \in \mathcal{O}_X(U)[[\hbar]]$, there is a unique $\widetilde{D}^{1, 0}$-flat holomorphic section $\widetilde{O}_f$ of $\widetilde{\mathcal{W}}$ over $U$ such that $\widetilde{O}_f = f - \widetilde{\delta}^{-1} ( \widetilde{D}^{1, 0} + \widetilde{\delta} ) f$, where $\widetilde{\delta}, \widetilde{\delta}^{-1}$ are given as in Subsection \ref{Subsection 3.4}. Indeed, $f \mapsto \widetilde{O}_f$ defines an isomorphism from $\mathcal{O}_X[[\hbar]]$ to the sheaf of $\widetilde{D}^{1, 0}$-flat holomorphic sections of $\widetilde{\mathcal{W}}$. In addition, $\widetilde{O}_f$ is a holomorphic extension of $O_{f \vert_{M \cap U}}$. Then $\widetilde{\star}$ descends to a holomorphic star product on $\mathcal{O}_X[[\hbar]]$, denoted by the same symbol, and the following proposition holds.

\begin{proposition}
	The following is a morphism of sheaves of $\mathbb{C}[[\hbar]]$-algebras:
	\begin{equation*}
		\iota^*: (\mathcal{O}_X[[\hbar]], \widetilde{\star}) \to \iota_*(\mathcal{C}_M^\infty[[\hbar]], \star).
	\end{equation*}
\end{proposition}

For every non-zero $k \in \mathbb{C}$, denote by $\mathcal{A}_{\operatorname{qu}}^{(k)}$ the sheaf of $\widetilde{D}_k^{1, 0}$-flat holomorphic sections of $\widehat{\operatorname{Sym}} (\mathcal{P}^{\operatorname{c}})^\vee \otimes \operatorname{Sym} (\check{\mathcal{P}})^\vee$, where $\widetilde{D}_k^{1, 0}$ is the evaluation of $\widetilde{D}^{1, 0}$ at $\hbar = \tfrac{\sqrt{-1}}{k}$.

\begin{corollary}
	Let $k \in \mathbb{Z}^+$. Then the following is a morphism of sheaves of $\mathbb{C}$-algebras:
	\begin{equation*}
		\iota^*: \mathcal{A}_{\operatorname{qu}}^{(k)} \to \iota_*\mathcal{O}_{\operatorname{qu}}^{(k)}.
	\end{equation*}
\end{corollary}

\subsection{Sheaves of modules $\mathcal{L}^{(k)}$ and $\mathcal{L}^{(-k)}$ realizing $\operatorname{Hom}(\mathcal{B}^{(k)}, \mathcal{B}_{\operatorname{cc}}^{(k)})$ and $\operatorname{Hom}(\mathcal{B}_{\operatorname{cc}}^{(k)}, \mathcal{B}^{(k)})$}
\label{Subsection 5.3}
\quad\par
Let the K\"ahler manifold $(M, \omega)$ admit a prequantum line bundle $(L, \nabla^L)$ and a square root $\sqrt{K}$ of its canonical bundle. Let $k \in \mathbb{Z}^+$. In this subsection, we recall one of Chan-Leung-Li's theorems.

\begin{theorem}[a variant of Theorem 4.7 in \cite{ChaLeuLi2023}]
	\label{Theorem 5.9}
	The sheaf $\mathcal{L}^{(k)}$ of holomorphic sections of $L^{(k)} := L^{\otimes k} \otimes \sqrt{K}$ has a left $\mathcal{O}_{\operatorname{qu}}^{(k)}$-module structure giving an isomorphism of TDOs between $\mathcal{O}_{\operatorname{qu}}^{(k)}$ and the sheaf $\mathcal{D}_{L^{(k)}}$ of holomorphic differential operators of $L^{(k)}$. 
\end{theorem}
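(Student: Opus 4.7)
The plan is to mimic the construction in Subsection \ref{Subsection 5.1} after twisting the Weyl bundle by $L^{(k)} = L^{\otimes k} \otimes \sqrt{K}$. Form the bundle $\mathcal{W}^{(k)} := \mathcal{W} \otimes L^{(k)}$ and extend $\nabla$ on $\mathcal{W}$ to a connection (still denoted $\nabla$) on $\mathcal{W}^{(k)}$ by tensoring with $k$ times the prequantum connection $\nabla^L$ on $L$ and the connection on $\sqrt{K}$ induced by the Levi-Civita connection. The curvature of the resulting connection on $L^{(k)}$ computes in K\"ahler coordinates as $\tfrac{k}{\sqrt{-1}}\omega$ from $L^{\otimes k}$ together with a contribution from $\sqrt{K}$ whose $(1,1)$-part is precisely $\tfrac{1}{\sqrt{-1}}\omega_1$; these are the same curvature terms appearing on the right-hand side of the Fedosov equation in Subsection \ref{Subsection 5.1}. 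It follows that the formal connection $D$ and its $\hbar = \tfrac{\sqrt{-1}}{k}$ specialization $D_k$ extend to a flat connection $D_k^{(k)}$ on $\underline{\mathcal{W}}_{\operatorname{cl}} \otimes L^{(k)}$, acting by $D_k$ on the $\underline{\mathcal{W}}_{\operatorname{cl}}$ factor and by $\nabla$ on $L^{(k)}$ and then twisting by $\tfrac{k}{\sqrt{-1}}[\gamma_k, \ \cdot\ ]_{\star_k}$.

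Next I would identify $D_k^{(k)}$-flat sections with local holomorphic sections of $L^{(k)}$. Given a local holomorphic section $s$ of $L^{(k)}$, the defining equation $D_k^{(k)}(O_s) = 0$ with initial condition $\pi_0(O_s) = s$ admits a unique solution $O_s$, solved iteratively in the weight filtration exactly as for $O_f$ in Example \ref{Example 5.4}. The crucial point, forced by the anti-Wick ordering, is that only $\partial/\partial\bar{w}$ derivatives of $A + \hbar B$ contribute to $\tfrac{1}{\hbar}[\gamma, O_s]_{\star}$ in the $(1,0)$ direction, so the $(1,0)$-part of $D_k^{(k)}$ preserves the subbundle $\mathcal{W}_{\operatorname{cl}}^{1,0} \otimes L^{(k)}$; this forces $s$ to satisfy $\bar\partial^L s = 0$, i.e., to be holomorphic. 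Conversely every holomorphic section arises this way. We obtain a $\mathbb{C}$-linear isomorphism between $\mathcal{L}^{(k)}$ and the sheaf of $D_k^{(k)}$-flat sections of $\underline{\mathcal{W}}_{\operatorname{cl}} \otimes L^{(k)}$.

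With this identification in hand, the fibrewise anti-Wick product $\star_k$ makes $\underline{\mathcal{W}}_{\operatorname{cl}} \otimes L^{(k)}$ into a left module over $(\underline{\mathcal{W}}_{\operatorname{cl}}, \star_k)$, and since $D_k^{(k)}$ acts as $\tfrac{k}{\sqrt{-1}}[\gamma_k, \ \cdot\ ]_{\star_k}$ plus a connection piece, the Leibniz rule $D_k^{(k)}(O \star_k \sigma) = (D_k O) \star_k \sigma + O \star_k D_k^{(k)}\sigma$ holds. This restricts to a left $\mathcal{O}_{\operatorname{qu}}^{(k)}$-module structure on $\mathcal{L}^{(k)}$, and by construction the action of $\iota(f) = O_f$ for $f$ holomorphic recovers multiplication by $f$. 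A short local computation, of the same type as in the lemma preceding Theorem \ref{Theorem 5.7}, shows that for a holomorphic vector field $\xi$ on $U$, $\tfrac{1}{\hbar}[O_{\xi(\rho)},\ \cdot\ ]_{\star}$ acts on $O_s$ as $\nabla^{L^{(k)}}_\xi$ (the relevant potential is $k\rho_0 + \rho_1$, which reproduces the connection on $L^{\otimes k} \otimes \sqrt{K}$).

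The resulting sheaf morphism $\mathcal{O}_{\operatorname{qu}}^{(k)}|_U \to \mathcal{D}_{L^{(k)}}|_U$ extending $\iota$ is then the local isomorphism used in the proof of Theorem \ref{Theorem 5.7}: it sends $\operatorname{ev}_k(\hbar^{-n} O_f \star O_{\xi_1(\rho)} \star \cdots \star O_{\xi_n(\rho)})$ to $f \cdot \nabla^{L^{(k)}}_{\xi_1} \circ \cdots \circ \nabla^{L^{(k)}}_{\xi_n}$. Since both $(\mathcal{O}_{\operatorname{qu}}^{(k)}, \iota)$ and $(\mathcal{D}_{L^{(k)}}, \iota_{L^{(k)}})$ are TDOs on $M$ and the local isomorphisms are compatible on overlaps (this follows because both sides are controlled by the same \v{C}ech cocycle $\partial(k\rho_{0,j}+\rho_{1,j}-k\rho_{0,i}-\rho_{1,i})$ described at the end of Subsection \ref{Subsection 5.1}), they glue to a global isomorphism of TDOs. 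The step I expect to be the main obstacle is the first passage of the previous paragraph: verifying that the anti-Wick ordering together with the chosen connection on $\sqrt{K}$ forces the $D_k^{(k)}$-flatness equation to cut out exactly holomorphic sections, with no spurious $\hbar$-corrections. This is precisely the role played by the metaplectic modification and the specific curvature $\omega_1$ that had to be inserted into Fedosov's equation (5.5).
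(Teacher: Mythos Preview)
There is a genuine gap in your flatness argument. You propose the connection $D_k^{(k)} = D_k \otimes \operatorname{Id} + \operatorname{Id}\otimes\nabla^{L^{(k)}}$ on $\underline{\mathcal{W}}_{\operatorname{cl}}\otimes L^{(k)}$ and claim it is flat because the curvature of $\nabla^{L^{(k)}}$ matches the right-hand side $-(\omega+\tfrac{\hbar}{\sqrt{-1}}\omega_1)$ of the Fedosov equation. But $D_k$ is built from the \emph{commutator} $\tfrac{1}{\hbar}[\gamma_k,\,\cdot\,]_{\star_k}$, and the Weyl curvature $-(\omega+\tfrac{\hbar}{\sqrt{-1}}\omega_1)$ is central, so $D_k^2=0$ already on $\underline{\mathcal{W}}_{\operatorname{cl}}$. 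Tensoring in $L^{(k)}$ then gives $(D_k^{(k)})^2 = \tfrac{1}{\sqrt{-1}}(k\omega+\omega_1)\neq 0$: nothing cancels the line-bundle curvature. For the same reason your later claim that $\tfrac{1}{\hbar}[O_{\xi(\rho)},\,\cdot\,]_\star$ acts as $\nabla^{L^{(k)}}_\xi$ cannot hold; commutator with a flat section sees only the bare vector field $\xi$, never the connection $1$-form of $L^{(k)}$.

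The paper repairs exactly this point by replacing the adjoint action with a genuine \emph{module} action $\circledast^+ : \underline{\mathcal{W}}\times\mathcal{W}^{1,0}\to\mathcal{W}^{1,0}$, in which each $\overline{w}^\nu$ acts as $\hbar\,\omega^{\overline{\nu}\mu}\partial/\partial w^\mu$ (a fibrewise Bargmann--Fock representation). Under $\circledast^+$ scalars act by wedge, not by zero, so Lemma~\ref{Lemma 5.12} gives $(\nabla+\tfrac{1}{\hbar}\gamma\circledast^+)^2 = -\tfrac{1}{\hbar}\omega+\sqrt{-1}\omega_1$ as a \emph{scalar} $2$-form, which now honestly cancels the curvature of $\nabla^{L^{(k)}}$ (Proposition~\ref{Proposition 5.13}). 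Working on $\mathcal{W}^{1,0}_{\operatorname{cl}}\otimes L^{(k)}$ rather than $\underline{\mathcal{W}}_{\operatorname{cl}}\otimes L^{(k)}$ is also essential: the cochain isomorphism $\Psi_{+,k}$ identifies $D^L_{+,k}$-flat sections with the Dolbeault complex $(\Omega^{0,*}(M,L^{(k)}),\overline{\partial})$, which is how holomorphic sections appear. The isomorphism $\mathcal{O}^{(k)}_{\operatorname{qu}}\cong\mathcal{D}_{L^{(k)}}$ is then extracted from the explicit flat frame $e^{\Phi_+}\otimes\sigma_+$ of Lemma~\ref{Lemma 5.17}, not from a cocycle comparison. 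Your outline becomes correct once you substitute $\circledast^+$ for $\star_k$ throughout and restrict the module bundle to $\mathcal{W}^{1,0}_{\operatorname{cl}}$.
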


An immediate consequence of the above theorem is the following corollary.

\begin{corollary}
	\label{Corollary 5.10}
	The sheaf $\mathcal{L}^{(-k)}$ of holomorphic sections of $L^{(-k)} := (L^\vee)^{\otimes k} \otimes \sqrt{K}$ has a right $\mathcal{O}_{\operatorname{qu}}^{(k)}$-module structure giving an isomorphism  of TDOs between $\mathcal{O}_{\operatorname{qu}}^{(k)}$ and the opposite algebra $\mathcal{D}_{L^{(-k)}}^\circ$ of $\mathcal{D}_{L^{(-k)}}$, where $\mathcal{D}_{L^{(-k)}}$ is the sheaf of holomorphic differential operators of $L^{(-k)}$.
\end{corollary}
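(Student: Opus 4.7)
The plan is to deduce the corollary from Theorem \ref{Theorem 5.9} together with the standard duality
$$\mathcal{D}_E^\circ \cong \mathcal{D}_{K \otimes E^\vee}$$
valid for any holomorphic line bundle $E$ on a complex manifold. First I would note the arithmetic identity
$$K \otimes (L^{(k)})^\vee = K \otimes (L^\vee)^{\otimes k} \otimes (\sqrt{K})^\vee = (L^\vee)^{\otimes k} \otimes \sqrt{K} = L^{(-k)},$$
so that applying the duality with $E = L^{(k)}$ yields $\mathcal{D}_{L^{(k)}}^\circ \cong \mathcal{D}_{L^{(-k)}}$, hence $\mathcal{D}_{L^{(k)}} \cong \mathcal{D}_{L^{(-k)}}^\circ$. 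Composing with the TDO isomorphism $\mathcal{O}_{\operatorname{qu}}^{(k)} \cong \mathcal{D}_{L^{(k)}}$ supplied by Theorem \ref{Theorem 5.9} produces the desired TDO isomorphism $\mathcal{O}_{\operatorname{qu}}^{(k)} \cong \mathcal{D}_{L^{(-k)}}^\circ$.

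Since $\mathcal{L}^{(-k)}$ is tautologically a left $\mathcal{D}_{L^{(-k)}}$-module, transporting this action along $\mathcal{D}_{L^{(-k)}} \cong (\mathcal{O}_{\operatorname{qu}}^{(k)})^\circ$ equips $\mathcal{L}^{(-k)}$ with a left $(\mathcal{O}_{\operatorname{qu}}^{(k)})^\circ$-action, i.e. a right $\mathcal{O}_{\operatorname{qu}}^{(k)}$-action. The TDO isomorphism displayed above is exactly the one realizing this right module structure, which is what the statement asserts.

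The only step that deserves comment is the duality $\mathcal{D}_E^\circ \cong \mathcal{D}_{K \otimes E^\vee}$ itself, a classical fact in $\mathcal{D}$-module theory. I would realize it concretely via the transpose with respect to the natural pairing $E \otimes_{\mathcal{O}_M} (K \otimes E^\vee) \to K$: each $P \in \mathcal{D}_E$ determines a unique $P^t \in \mathcal{D}_{K \otimes E^\vee}$ satisfying $\langle Ps, \sigma \rangle - \langle s, P^t \sigma \rangle \in \partial \Omega_M^{n-1, 0}$ for all local holomorphic $s$ of $E$ and $\sigma$ of $K \otimes E^\vee$, where $n = \dim_{\mathbb{C}} M$. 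A local verification, reducing to the trivial line bundle where $P \mapsto P^t$ becomes the classical transpose of differential operators, shows that $P \mapsto P^t$ is an anti-isomorphism of sheaves of $\mathbb{C}$-algebras from $\mathcal{D}_E$ to $\mathcal{D}_{K \otimes E^\vee}$, as required. I do not anticipate any genuine obstacle; the argument is essentially formal once Theorem \ref{Theorem 5.9} is in hand, consistent with the paper's description of the corollary as ``an immediate consequence.''
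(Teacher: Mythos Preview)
Your argument is correct and does prove the corollary as stated: the classical anti-isomorphism $\mathcal{D}_E^\circ \cong \mathcal{D}_{K\otimes E^\vee}$ together with the arithmetic $K\otimes(L^{(k)})^\vee \cong L^{(-k)}$ and Theorem \ref{Theorem 5.9} yields the TDO isomorphism $\mathcal{O}_{\operatorname{qu}}^{(k)}\cong \mathcal{D}_{L^{(-k)}}^\circ$, and transporting the tautological left $\mathcal{D}_{L^{(-k)}}$-action along it gives the required right $\mathcal{O}_{\operatorname{qu}}^{(k)}$-module structure. This is almost certainly the reasoning the paper has in mind when it calls the corollary ``an immediate consequence.''

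However, the paper does not actually stop there: it goes on to construct the right module structure \emph{directly}, in parallel with the left one, by introducing a second fibrewise operation $\circledast^-$ alongside $\circledast^+$, a flat connection $D_{-,k}^L$ on $\mathcal{W}_{\operatorname{cl}}^{1,0}\otimes L^{(-k)}$, and checking by hand (Lemma \ref{Lemma 5.17} and the paragraphs following it) that the induced map $\varphi_-:\mathcal{O}_{\operatorname{qu}}^{(k)}\to \mathcal{D}_{L^{(-k)}}^\circ$ is an isomorphism. Your route is shorter and perfectly adequate for the bare statement; the paper's route is longer but gives an explicit Fedosov-type realization of the right action that is later reused (the operations $\circledast^\pm$ and the connections $D_{\pm,k}^L$ feed directly into the bimodule construction for $\mathcal{L}_{\operatorname{sm}}^{(2k)}$ in Subsection \ref{Subsection 5.3}). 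So your proof is fine for the corollary in isolation, but be aware that the paper's explicit $\circledast^-$-construction, not the abstract transpose, is what gets used downstream.
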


The proof of Theorem 4.7 in \cite{ChaLeuLi2023} essentially applies to its variant, Theorem \ref{Theorem 5.9}, in spite of the use of a star product of anti-Wick type and metaplectic correction adopted in this paper. For clarity, however, we will provide explicit constructions of the module structures and outline the arguments showing that these induce isomorphisms $\mathcal{O}_{\operatorname{qu}}^{(k)} \cong \mathcal{D}_{L^{(k)}}$ and $\mathcal{O}_{\operatorname{qu}}^{(k)} \cong \mathcal{D}_{L^{(-k)}}^\circ$. The starting point is to define fibrewise left and right module structures.

\begin{definition}
	Define two $\mathcal{C}^\infty(M, \mathbb{C})[[\hbar]]$-bilinear maps
	\begin{align*}
		\circledast^\pm: & \Omega^*(M, \underline{\mathcal{W}}) \times \Omega^*(M, \mathcal{W}) \to \Omega^*(M, \mathcal{W})
	\end{align*}
	as follows. For $\alpha, \sigma \in \Omega^*(M)$, $a = w^{\mu_1} \cdots w^{\mu_p} \overline{w}^{\nu_1} \cdots \overline{w}^{\nu_q} \in \Gamma(M, \underline{\mathcal{W}})$ and $s \in \Gamma(M, \mathcal{W})$,
	\begin{align*}
		(\alpha \otimes a) \circledast^+ (\sigma \otimes s) := & \alpha \wedge \sigma \otimes \hbar^q \omega^{\overline{\nu}_1\lambda_1} \cdots \omega^{\overline{\nu}_q\lambda_q} w^{\mu_1} \cdots w^{\mu_p} \frac{\partial^q}{\partial w^{\lambda_1} \cdots \partial w^{\lambda_q}} (s),\\
		(\alpha \otimes a) \circledast^- (\sigma \otimes s) := & \alpha \wedge \sigma \otimes (-\hbar)^q \omega^{\overline{\nu}_1\lambda_1} \cdots \omega^{\overline{\nu}_q\lambda_q} \frac{\partial^q}{\partial w^{\lambda_1} \cdots \partial w^{\lambda_q}} ( w^{\mu_1} \cdots w^{\mu_p} s).
	\end{align*}
\end{definition}

Note that for $s \in \Omega^*(M, \mathcal{W})$ and homogeneous $a, v \in \Omega^*(M, \underline{\mathcal{W}})$,
\begin{align*}
	a \circledast^+ (b \circledast^+ s) = & (a \star b) \circledast^+ s = [a, b]_\star \circledast^+ s + (-1)^{\lvert a \rvert \lvert b \rvert} b \circledast^+ (a \circledast^+ s),\\
	a \circledast^- (b \circledast^- s) = & (-1)^{\lvert a \rvert \lvert b \rvert} (b \star a) \circledast^- s = -[a, b]_\star \circledast^- s + (-1)^{\lvert a \rvert \lvert b \rvert} b \circledast^- (a \circledast^- s).
\end{align*}

Clearly, $\circledast^\pm$ restricts to an operation $\circledast^\pm: \Omega^*(M, \underline{\mathcal{W}}) \times \Omega^*(M, \mathcal{W}^{1, 0}) \to \Omega^*(M, \mathcal{W}^{1, 0})$. We can express the curvature of $\nabla$ on $\mathcal{W}^{1, 0}$ in terms of $\circledast^\pm$.

\begin{lemma}
	\label{Lemma 5.12}
	For $s \in \Omega^*(M, \mathcal{W}^{1, 0})$, we have
	\begin{equation*}
		\hbar \nabla^2 s = \pm \left( R - \sqrt{-1} \hbar \omega_1 \right) \circledast^\pm s + \sqrt{-1}\hbar \omega_1 \wedge s.
	\end{equation*}
\end{lemma}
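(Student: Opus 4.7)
The plan is to prove the identity by direct computation in local K\"ahler coordinates, exploiting the fact that $s$ has no $\overline{w}$-dependence. The starting points are the explicit formulae recalled in the paper: $R = -\omega_{\eta\overline{\nu}} R_{\alpha\overline{\beta}\mu}^\eta \, dz^\alpha \wedge d\overline{z}^\beta \otimes w^\mu \overline{w}^\nu$, the trace expression $\omega_1 = -\tfrac{\sqrt{-1}}{2} R_{\alpha\overline{\beta}\eta}^\eta dz^\alpha \wedge d\overline{z}^\beta$, and the formula for the connection curvature on $\mathcal{W}$. Since $s \in \Omega^*(M, \mathcal{W}^{1,0})$ kills the $\overline{w}^\mu \partial/\partial \overline{w}^\nu$ part of $\nabla^2$, I get
\begin{equation*}
\hbar \nabla^2 s = -\hbar R_{\alpha\overline{\beta}\mu}^\nu\, dz^\alpha \wedge d\overline{z}^\beta \otimes w^\mu \tfrac{\partial s}{\partial w^\nu}.
\end{equation*}

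Next I would unwind the two fibrewise module operations applied to the tensor $w^\mu \overline{w}^\nu$ appearing in $R$. For $\circledast^+$, one has $p=q=1$, so only the single derivative $\partial/\partial w^\lambda$ lands on $s$; contracting $\omega_{\eta\overline{\nu}}\omega^{\overline{\nu}\lambda}=\delta^\lambda_\eta$ one finds $R \circledast^+ s = \hbar \nabla^2 s$. Because $\omega_1$ has $p=q=0$ one has $\omega_1 \circledast^+ s = \omega_1 \wedge s$, so the $-\sqrt{-1}\hbar\,\omega_1 \circledast^+ s$ piece cancels against $+\sqrt{-1}\hbar\,\omega_1 \wedge s$, yielding the $+$ case immediately.

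For the $-$ case the key extra input is the Leibniz contribution coming from the definition of $\circledast^-$: applying $\partial/\partial w^\lambda$ to $w^\mu s$ produces $\delta^\mu_\lambda\, s + w^\mu \tfrac{\partial s}{\partial w^\lambda}$. After the $\omega$-contraction, the $w^\mu \partial/\partial w^\lambda$ term reproduces $-R \circledast^+ s = -\hbar \nabla^2 s$, while the $\delta^\mu_\lambda$ term gives the trace $\hbar R_{\alpha\overline{\beta}\mu}^\mu dz^\alpha \wedge d\overline{z}^\beta \otimes s$, which by the definition of $\omega_1$ equals $2\sqrt{-1}\hbar\, \omega_1 \wedge s$. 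Thus $R \circledast^- s = 2\sqrt{-1}\hbar\, \omega_1 \wedge s - \hbar\nabla^2 s$, and substituting this together with $\omega_1 \circledast^- s = \omega_1 \wedge s$ into the right-hand side of the claimed formula verifies the identity.

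The computation is essentially bookkeeping, so the only real obstacle is keeping the sign and index conventions consistent, especially the placement of $\omega_{\eta\overline{\nu}}$ versus $\omega^{\overline{\nu}\lambda}$, and the Leibniz term in $\circledast^-$ which is precisely what manufactures the $\omega_1$ correction. Once that is handled, both cases collapse to the two short local calculations sketched above.
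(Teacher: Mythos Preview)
Your proposal is correct and follows essentially the same approach as the paper: a direct local computation of $R \circledast^\pm s$ using the explicit coordinate expression for $R$, together with the observation $\omega_1 \circledast^\pm s = \omega_1 \wedge s$, with the Leibniz term in $\circledast^-$ producing the trace $R_{\alpha\overline{\beta}\eta}^\eta = 2\sqrt{-1}\,\omega_1$.
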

\begin{proof}
	Note that $\omega_1 \circledast^\pm s = \omega_1 \wedge s$. Recall that $R = -\omega_{\eta\overline{\nu}} R_{\alpha\overline{\beta}\mu}^\eta dz^\alpha \wedge d\overline{z}^\beta \otimes w^\mu \overline{w}^\nu$. Then
	\begin{align*}
		R \circledast^+ s = & -\hbar \omega^{\overline{\nu}\lambda} \omega_{\eta\overline{\nu}} R_{\alpha\overline{\beta}\mu}^\eta dz^\alpha \wedge d\overline{z}^\beta \wedge w^\mu \frac{\partial}{\partial w^\lambda} s = -\hbar R_{\alpha\overline{\beta}\mu}^\nu dz^\alpha \wedge d\overline{z}^\beta \wedge w^\mu \frac{\partial}{\partial w^\nu} s = \hbar \nabla^2 s,\\
		R \circledast^- s = & \hbar \omega^{\overline{\nu}\lambda} \omega_{\eta\overline{\nu}} R_{\alpha\overline{\beta}\mu}^\eta dz^\alpha \wedge d\overline{z}^\beta \wedge \frac{\partial}{\partial w^\lambda} \left( w^\mu  \cdot s \right)\\
		= & \hbar \left( R_{\alpha\overline{\beta}\eta}^\eta dz^\alpha \wedge d\overline{z}^\beta \wedge s + R_{\alpha\overline{\beta}\mu}^\nu dz^\alpha \wedge d\overline{z}^\beta \wedge w^\mu \frac{\partial}{\partial w^\nu} s \right)\\
		= & \hbar \left( \sqrt{-1} \omega_1 \circledast^- s + \sqrt{-1} \omega_1 \wedge s - \nabla^2s \right).
	\end{align*}
\end{proof}
Let $\nabla^{L^{(\pm k)}}$ be the unitary connection on $L^{(\pm k)}$ induced by $\nabla^L$ and the Chern connection of $\sqrt{K}$. Define a connection $D_{\pm, k}^L$ on $\mathcal{W}_{\operatorname{cl}}^{1, 0} \otimes L^{(\pm k)}$ by
\begin{equation}
	D_{\pm, k}^L := (\nabla \pm \tfrac{k}{\sqrt{-1}} \gamma_k \circledast_k^\pm) \otimes \operatorname{Id} + \operatorname{Id} \otimes \nabla^{L^{(\pm k)}}.
\end{equation}

We first prove two basic properties of $D_{\pm, k}^L$.

\begin{proposition}
	\label{Proposition 5.13}
	The connection $D_{\pm, k}^L$ is flat.
\end{proposition}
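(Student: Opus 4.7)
The plan is to verify $(D_{\pm,k}^L)^2 = 0$ by a direct Fedosov-style curvature computation. Write
\begin{equation*}
D_{\pm,k}^L = D_{\pm,k}^{\mathcal{W}} \otimes \operatorname{Id} + \operatorname{Id} \otimes \nabla^{L^{(\pm k)}}, \qquad D_{\pm,k}^{\mathcal{W}} := \nabla \pm \tfrac{k}{\sqrt{-1}} \gamma_k \circledast_k^\pm,
\end{equation*}
where $D_{\pm,k}^{\mathcal{W}}$ is a connection on $\mathcal{W}_{\operatorname{cl}}^{1,0}$ (which is indeed preserved by $\circledast^\pm$, since these operations differentiate the right operand in the $w$-direction and multiply by its $w$-part). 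Because the computation below will produce $(D_{\pm,k}^{\mathcal{W}})^2 = \alpha \cdot \operatorname{Id}$ for a scalar $2$-form $\alpha$, the cross terms in the tensor-product curvature vanish by the usual Leibniz cancellation, and it suffices to show that $\alpha + (\nabla^{L^{(\pm k)}})^2 = 0$ as $2$-forms.

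For the line-bundle side, tracing the Levi-Civita curvature on $T^{1,0}M$ gives the Chern curvature of $K^\vee$, and comparing with the definition $\omega_1 = -\tfrac{\sqrt{-1}}{2} R_{\alpha\overline{\beta}\eta}^\eta dz^\alpha \wedge d\overline{z}^\beta$ shows that the Chern connection on $\sqrt{K}$ has curvature $\tfrac{1}{\sqrt{-1}} \omega_1$. Combined with the curvature $\pm\tfrac{k}{\sqrt{-1}} \omega$ of $L^{\otimes \pm k}$, this yields $(\nabla^{L^{(\pm k)}})^2 = \tfrac{1}{\sqrt{-1}}( \pm k \omega + \omega_1 )$.

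For the Weyl-bundle side, I would work at formal level with $\nabla \pm \tfrac{1}{\hbar} \gamma \circledast^\pm$ and specialize $\hbar = \tfrac{\sqrt{-1}}{k}$ only at the end. Expanding the square, the Leibniz rule for $\nabla$ passing through the $1$-form $\gamma$ cancels the mixed terms, while the associativity relations displayed right above Lemma \ref{Lemma 5.12}, namely $\gamma \circledast^+ (\gamma \circledast^+ s) = \tfrac{1}{2}[\gamma, \gamma]_\star \circledast^+ s$ and $\gamma \circledast^- (\gamma \circledast^- s) = -\tfrac{1}{2}[\gamma, \gamma]_\star \circledast^- s$, convert the squared $\gamma \circledast^\pm$ term into a commutator. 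This yields
\begin{equation*}
(D_{\pm}^{\mathcal{W}})^2 s = \nabla^2 s \pm \tfrac{1}{\hbar} \bigl( \nabla \gamma + \tfrac{1}{2\hbar}[\gamma, \gamma]_\star \bigr) \circledast^\pm s.
\end{equation*}
Now I substitute the Fedosov equation $\nabla \gamma + \tfrac{1}{2\hbar}[\gamma, \gamma]_\star = -R - \omega - \tfrac{\hbar}{\sqrt{-1}} \omega_1$ and eliminate $\nabla^2 s$ using Lemma \ref{Lemma 5.12} together with $\omega_1 \circledast^\pm s = \omega_1 \wedge s$. The $R$-contributions cancel, leaving only scalar wedge products. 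After specializing $\hbar = \tfrac{\sqrt{-1}}{k}$ and repeatedly using $\sqrt{-1} = -\tfrac{1}{\sqrt{-1}}$, one obtains $(D_{\pm,k}^{\mathcal{W}})^2 s = -\tfrac{1}{\sqrt{-1}}( \pm k \omega + \omega_1 ) \wedge s$, precisely cancelling the line-bundle curvature.

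The main delicate point is sign bookkeeping in the $-$ variant: the associativity relation for $\circledast^-$ carries an extra minus sign relative to $\circledast^+$, and the $-$ version of Lemma \ref{Lemma 5.12} contributes $-R \circledast^- s + 2\sqrt{-1}\hbar\, \omega_1 \wedge s$ rather than just $R \circledast^+ s$, because in the $-$ case $\omega_1 \circledast^- s = \omega_1 \wedge s$ adds to the free $\sqrt{-1}\hbar\, \omega_1 \wedge s$ summand instead of cancelling it. These two asymmetries combine with the flipped sign of $\nabla^{L^{(-k)}}$ to still yield the same cancellation; verifying this sign compatibility carefully is the one non-routine step, and what makes clear that the particular coefficient in $\omega_1$ (i.e., the metaplectic correction) is exactly what is required.
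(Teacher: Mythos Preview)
Your proposal is correct and follows essentially the same approach as the paper: expand $(\nabla \pm \tfrac{1}{\hbar}\gamma\circledast^\pm)^2$ using the associativity identities for $\circledast^\pm$, replace $\nabla^2$ via Lemma~\ref{Lemma 5.12}, substitute the Fedosov equation $R+\nabla\gamma+\tfrac{1}{2\hbar}[\gamma,\gamma]_\star = -(\omega+\tfrac{\hbar}{\sqrt{-1}}\omega_1)$, and observe that the resulting scalar $2$-form cancels the curvature $\tfrac{1}{\sqrt{-1}}(\pm k\omega+\omega_1)$ of $\nabla^{L^{(\pm k)}}$. The paper's proof is more compressed (handling both signs in a single displayed line), while your write-up makes the $\pm$ sign bookkeeping more explicit, but there is no substantive difference.
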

\begin{proof}
	Set $\hbar = \tfrac{\sqrt{-1}}{k}$. For the ease of notations, simply write $\gamma$, $\star$, $\circledast^\pm$ in place of $\gamma_k$, $\star_k$ and $\circledast_k^\pm$ respectively. By Lemma \ref{Lemma 5.12},
	\begin{align*}
		( \nabla \pm \tfrac{1}{\hbar} \gamma \circledast^\pm )^2 = & \nabla^2 \pm \tfrac{1}{\hbar} (\nabla \gamma) \circledast^\pm \pm \tfrac{1}{2\hbar^2} [\gamma, \gamma]_\star \circledast^\pm\\
		= & \pm \tfrac{1}{\hbar} ( R + \nabla \gamma + \tfrac{1}{2\hbar^2} [\gamma, \gamma]_\star - \sqrt{-1}\hbar \omega_1) \circledast^\pm + \sqrt{-1} \omega_1\\
		= & \mp \tfrac{1}{\hbar} \omega + \sqrt{-1} \omega_1 = -\tfrac{1}{\sqrt{-1}}(\pm k\omega + \omega_1).
	\end{align*}
	Note that the curvature of $\nabla^{L^{(\pm k)}}$ is $\frac{1}{\sqrt{-1}} (\pm k\omega + \omega_1)$. We are done.
\end{proof}

\begin{proposition}
	\label{Proposition 5.14}
	Let $a \in \Omega^*(M, \underline{\mathcal{W}}_{\operatorname{cl}})$ and $s \in \Omega^*(M, \mathcal{W}_{\operatorname{cl}}^{1, 0} \otimes L^{(\pm k)})$. Then we have
	\begin{equation}
		\label{Equation 5.12}
		D_{\pm, k}^L(a \circledast_k^\pm s) = (D_ka) \circledast_k^\pm s + (-1)^{\lvert a \rvert} a \circledast_k^\pm D_{\pm, k}^L s,
	\end{equation}
\end{proposition}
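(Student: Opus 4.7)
The plan is to show the identity by decomposing $D_{\pm, k}^L$ into its $\nabla$-part, its $\gamma \circledast^\pm$-part, and its line-bundle-derivative part, and then treating each contribution separately. Since $\circledast^\pm$ acts only on the Weyl-bundle factor, I will first factor $s = t \otimes \ell$ with $t \in \Omega^*(M, \mathcal{W}_{\operatorname{cl}}^{1,0})$ and $\ell$ a local section of $L^{(\pm k)}$, so that $a \circledast_k^\pm s = (a \circledast_k^\pm t) \otimes \ell$. The line-bundle part is then immediate from the usual Leibniz rule for the tensor-product connection: the terms involving $\nabla^{L^{(\pm k)}}\ell$ come out to $(-1)^{|a|+|t|}(a \circledast_k^\pm t) \otimes \nabla^{L^{(\pm k)}}\ell$, which after pulling a factor of $(-1)^{|a|}$ out matches the $\nabla^{L^{(\pm k)}}$-piece of $(-1)^{|a|} a \circledast_k^\pm D_{\pm, k}^L s$.

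Next I would establish a Leibniz rule for $\nabla$ with respect to $\circledast_k^\pm$, namely
\begin{equation*}
    \nabla(a \circledast_k^\pm t) = (\nabla a) \circledast_k^\pm t + (-1)^{|a|} a \circledast_k^\pm \nabla t.
\end{equation*}
This follows from the local formula for $\circledast_k^\pm$ together with $\nabla \omega = 0$ (so that the coefficients $\omega^{\overline{\nu}\mu}$ and the Weyl-bundle polynomial structure are parallel), plus the standard Leibniz rule for wedge products. This is routine, so I would just indicate it and move on.

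The heart of the computation is to push $\gamma$ through $a$ using the stated identity on $\circledast_k^\pm$. In the $+$-case this reads $\gamma \circledast_k^+ (a \circledast_k^+ t) = [\gamma, a]_{\star_k} \circledast_k^+ t + (-1)^{|a|} a \circledast_k^+ (\gamma \circledast_k^+ t)$, and in the $-$-case $-\gamma \circledast_k^- (a \circledast_k^- t) = [\gamma, a]_{\star_k} \circledast_k^- t - (-1)^{|a|} a \circledast_k^- (\gamma \circledast_k^- t)$ (where $|\gamma|=1$ takes care of the intermediate sign). Combining both cases uniformly gives
\begin{equation*}
    \pm \tfrac{k}{\sqrt{-1}} \gamma_k \circledast_k^\pm (a \circledast_k^\pm t) = \tfrac{k}{\sqrt{-1}}[\gamma_k, a]_{\star_k} \circledast_k^\pm t + (-1)^{|a|} a \circledast_k^\pm \bigl(\pm \tfrac{k}{\sqrt{-1}} \gamma_k \circledast_k^\pm t\bigr).
\end{equation*}

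Adding this to the Leibniz identity from step two and recognizing $D_k a = \nabla a + \tfrac{k}{\sqrt{-1}}[\gamma_k, a]_{\star_k}$ yields $(\nabla \pm \tfrac{k}{\sqrt{-1}}\gamma_k \circledast_k^\pm)(a \circledast_k^\pm t) = (D_k a) \circledast_k^\pm t + (-1)^{|a|} a \circledast_k^\pm (\nabla \pm \tfrac{k}{\sqrt{-1}}\gamma_k \circledast_k^\pm) t$. Re-tensoring with $\ell$ and adding the line-bundle terms from step one completes the derivation. I do not expect any real obstacle: the calculation is essentially an exercise in graded Leibniz rules and the associativity identity $a \circledast_k^\pm (b \circledast_k^\pm s) = (a \star_k b) \circledast_k^\pm s$ (with appropriate signs); the only care needed is signs when moving the odd-degree element $\gamma_k$ past $a$ of arbitrary parity and when the line-bundle connection $\nabla^{L^{(\pm k)}}$ jumps past $a$.
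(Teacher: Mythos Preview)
Your proposal is correct and follows essentially the same approach as the paper's proof: the paper also strips off the line-bundle factor (working with $s' \in \Omega^*(M, \mathcal{W}_{\operatorname{cl}}^{1,0})$), uses the Leibniz rule for $\nabla$ with respect to $\circledast^\pm$, and invokes the commutator identities $a \circledast^\pm (b \circledast^\pm s) = \pm[a,b]_\star \circledast^\pm s + (-1)^{|a||b|} b \circledast^\pm (a \circledast^\pm s)$ to rewrite $\pm\tfrac{1}{\hbar}\gamma \circledast^\pm(a \circledast^\pm s')$, then says the full statement ``follows from the above equality'' (i.e.\ reattaching the line-bundle part). Your write-up is simply more explicit about each step and the sign bookkeeping.
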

\begin{proof}
	Again, set $\hbar = \tfrac{\sqrt{-1}}{k}$ and write $\gamma$, $\star$, $\circledast^\pm$ in place of $\gamma_k$, $\star_k$ and $\circledast_k^\pm$ respectively. Observe that for $s' \in \Omega^*(M, \mathcal{W}_{\operatorname{cl}}^{1, 0})$,
	\begin{align*}
		( \nabla \pm \tfrac{1}{\hbar} \gamma \circledast^\pm ) (a \circledast^\pm s') = & (\nabla a) \circledast^\pm s' \pm \tfrac{1}{\hbar} \gamma \circledast^\pm (a \circledast^\pm s') + (-1)^{\lvert a \rvert} a \circledast^\pm \nabla s'\\
		= & (\nabla a + \tfrac{1}{\hbar} [\gamma, a]_\star) \circledast^\pm s' + (-1)^{\lvert a \rvert} a \circledast^\pm (\nabla \pm \tfrac{1}{\hbar} \gamma \circledast^\pm)(s').
	\end{align*}
	Then (\ref{Equation 5.12}) follows from the above equality.
\end{proof}

Decompose $\nabla \otimes \operatorname{Id} + \operatorname{Id} \otimes \nabla^{L^{(\pm k)}}$ into its $(1, 0)$-part and $(0, 1)$-part as $\nabla \otimes \operatorname{Id} + \operatorname{Id} \otimes \nabla^{L^{(\pm k)}} = \partial + \overline{\partial}$. 

\begin{lemma}
	\label{Lemma 5.15}
	On $\Omega^*(M, \mathcal{W})$, we have $((\delta^{0, 1})^{-1}\omega) \circledast^\pm = \mp \hbar \delta^{1, 0}$.
\end{lemma}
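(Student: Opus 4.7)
The argument is a direct unwinding of definitions in local K\"ahler coordinates. The plan has three short steps. First, I would compute $(\delta^{0,1})^{-1}\omega$ from its defining formula. Viewing $\omega = \omega_{\alpha\overline{\beta}}\, dz^\alpha \wedge d\overline{z}^\beta$ as a section of $\bigwedge^2 T^\vee M_{\mathbb{C}} \otimes \mathcal{W}$ with trivial fibre part, one has form degree $q = 1$ in $d\overline{z}$ and no $\overline{w}$'s, so
\[
(\delta^{0,1})^{-1}\omega \;=\; \overline{w}^\mu\, \iota_{\partial_{\overline{z}^\mu}}\omega \;=\; -\omega_{\alpha\overline{\beta}}\, dz^\alpha \otimes \overline{w}^\beta,
\]
using $\iota_{\partial_{\overline{z}^\mu}}(dz^\alpha \wedge d\overline{z}^\beta) = -\delta^\beta_\mu\, dz^\alpha$. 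Note that this expression lies in $\Omega^{1,0}(M,\underline{\mathcal{W}})$, so the $\circledast^\pm$ actions are well-defined on it.

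Second, I would substitute this expression into the definitions of $\circledast^\pm$. Each summand has fibre factor $a = \overline{w}^\beta$, i.e.\ $p = 0$ and $q = 1$ with $\nu_1 = \beta$; the displayed formulas then collapse to
\[
(dz^\alpha \otimes \overline{w}^\beta) \circledast^{\pm} s \;=\; \pm\, \hbar\, \omega^{\overline{\beta}\lambda}\, dz^\alpha \wedge \tfrac{\partial s}{\partial w^\lambda},
\]
the sign being exactly the $(-\hbar)^q$ versus $\hbar^q$ distinction between $\circledast^-$ and $\circledast^+$, and the prefactor $w^{\mu_1}\cdots w^{\mu_p}$ in the $\circledast^-$ formula being empty because $p = 0$, so that $\partial/\partial w^\lambda$ acts directly on $s$.

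Finally, absorbing the scalar coefficient $-\omega_{\alpha\overline{\beta}}$ and contracting via $\omega_{\alpha\overline{\beta}}\omega^{\overline{\beta}\lambda} = \delta^\lambda_\alpha$ yields
\[
((\delta^{0,1})^{-1}\omega) \circledast^{\pm} s \;=\; \mp\, \hbar\, dz^\lambda \wedge \tfrac{\partial s}{\partial w^\lambda} \;=\; \mp\, \hbar\, \delta^{1,0} s,
\]
which is the claim. The proof is entirely routine; the only point requiring any attention is sign bookkeeping — the minus sign from the interior product together with the $(-\hbar)^q$ factor distinguishing $\circledast^-$ from $\circledast^+$ — but neither presents any real obstacle.
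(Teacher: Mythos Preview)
Your proof is correct. The paper states Lemma \ref{Lemma 5.15} without proof, treating it as a routine verification; your direct coordinate computation is precisely the natural argument one would supply, and the sign bookkeeping is handled correctly.
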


\begin{lemma}
	The map
	\begin{equation*}
		\Psi_{\pm, k}: (\Omega^*(M, \mathcal{W}_{\operatorname{cl}}^{1, 0} \otimes L^{(\pm k)}), D_{\pm, k}^L) \to (\Omega^*(M, \mathcal{W}_{\operatorname{cl}}^{1, 0} \otimes L^{(\pm k)}), \overline{\partial} - \delta^{1, 0}),
	\end{equation*}
	defined by $s \mapsto s - (\delta^{1, 0})^{-1} (D_{\pm, k}^L - ( \overline{\partial} - \delta^{1, 0} )) s$, is a cochain isomorphism.
\end{lemma}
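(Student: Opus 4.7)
The plan is to decompose $D_{\pm, k}^L = D_0 + E$ with $D_0 := \overline{\partial} - \delta^{1, 0}$ and $E := D_{\pm, k}^L - D_0$, and then to prove the cochain property and bijectivity separately. The key ingredients will be the flatness of $D_{\pm, k}^L$ (Proposition \ref{Proposition 5.13}), the homotopy identity $\delta^{1, 0}(\delta^{1, 0})^{-1} + (\delta^{1, 0})^{-1}\delta^{1, 0} = \operatorname{Id} - \pi_{0, *}$, and Lemma \ref{Lemma 5.15}.

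First, I would compute $E$ explicitly by splitting $\gamma_k = \gamma^{1, 0} + \gamma^{0, 1}$ with $\gamma^{1, 0} := (\delta^{0, 1})^{-1}\omega$ and $\gamma^{0, 1} := (\delta^{1, 0})^{-1}\omega + A_k + \hbar B_k$, where $\hbar = \sqrt{-1}/k$. Lemma \ref{Lemma 5.15} gives $\pm \tfrac{1}{\hbar}\gamma^{1, 0}\circledast_k^\pm = -\delta^{1, 0}$, so
\begin{equation*}
E = \partial \pm \tfrac{1}{\hbar}\gamma^{0, 1}\circledast_k^\pm.
\end{equation*}
Since $\gamma^{0, 1}$ contains only $w$-variables (no $\overline{w}$'s), this term reduces to wedging by a $(0, 1)$-form combined with symmetric multiplication in the $w$-variables.

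Second, to verify $\Psi_{\pm, k} \circ D_{\pm, k}^L = D_0 \circ \Psi_{\pm, k}$, I would expand
\begin{equation*}
(\Psi_{\pm, k} D_{\pm, k}^L - D_0 \Psi_{\pm, k}) s = E s - (\delta^{1, 0})^{-1} E D_0 s - (\delta^{1, 0})^{-1} E^2 s + D_0 (\delta^{1, 0})^{-1} E s,
\end{equation*}
substitute $D_0 = \overline{\partial} - \delta^{1, 0}$, and use (a) the anticommutation $\{\overline{\partial}, (\delta^{1, 0})^{-1}\} = 0$ (since the two operators act on disjoint tensor factors and are both odd), (b) the Fedosov homotopy identity to rewrite $\delta^{1, 0}(\delta^{1, 0})^{-1} E$, and (c) the relation $\{D_0, E\} = -E^2$, which follows from $D_0^2 = 0$ and the flatness $(D_{\pm, k}^L)^2 = 0$. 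After grouping, the obstruction collapses to $\pi_{0, *} E s$. But $\pi_{0, *}$ annihilates any section carrying a $(1, 0)$-form factor or a $w$-variable, and both summands of $E$ produce exactly such sections ($\partial s$ gains a $dz$-factor, while $\gamma^{0, 1}\circledast_k^\pm s$ gains a $w$-factor from $\gamma^{0, 1}$). Hence $\pi_{0, *} E = 0$ as an operator, which yields the cochain identity.

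Third, to establish bijectivity, I would filter by the symmetric degree in $\mathcal{W}_{\operatorname{cl}}^{1, 0}$ and set $T := (\delta^{1, 0})^{-1} E$. The operator $(\delta^{1, 0})^{-1}$ raises $w$-degree by $1$, $\partial$ preserves it, and $\gamma^{0, 1}\circledast_k^\pm$ raises it by at least $1$ (as each of $(\delta^{1, 0})^{-1}\omega$, $A_k$, and $B_k$ contains at least one $w$), so $T$ strictly raises the $w$-degree on every section. Thus the Neumann series $\Psi_{\pm, k}^{-1} = \sum_{n \geq 0} T^n$ converges termwise in the $w$-adic topology on $\widehat{\operatorname{Sym}} T^{\vee(1, 0)}M$ and furnishes a two-sided inverse of $\Psi_{\pm, k}$. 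The main obstacle is the algebraic bookkeeping in step two: the anticommutation signs and the homotopy identity must combine just so to isolate $\pi_{0, *} E$, and the vanishing of this term is precisely where the anti-Wick ordering (as opposed to Moyal) pays off, since it aligns all $w$-variables with $\gamma^{0, 1}$ and all $\overline{w}$-variables with $\gamma^{1, 0}$.
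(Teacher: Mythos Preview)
Your approach is essentially the same as the paper's: set $E = \underline{D}_{\pm,k}^L := D_{\pm,k}^L - (\overline{\partial}-\delta^{1,0})$, reduce the cochain identity to $\pi_{0,*}E = 0$ via the homotopy formula and flatness, and prove bijectivity by showing that $(\delta^{1,0})^{-1}E$ strictly raises the $w$-filtration so that the Neumann series (equivalently, Fedosov iteration) converges. Your packaging via $\{D_0,E\} = -E^2$ is a clean way to do the bookkeeping in step two, and it matches the paper's manipulation line by line.

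One small inaccuracy: you assert that $\gamma^{0,1} = (\delta^{1,0})^{-1}\omega + A_k + \hbar B_k$ contains only $w$-variables. This is false for $A$: each $A_{(r)}$ carries a single $\overline{w}^\nu$, which under $\circledast_k^\pm$ becomes a $w$-derivative rather than a multiplication. Your conclusions nonetheless survive, because $A_{(r)}$ has $r\geq 2$ factors of $w$, so after the one derivative the net $w$-degree of $A_{(r)}\circledast_k^\pm s$ still increases by at least $r-1\geq 1$; likewise it always lands in positive $w$-degree, so $\pi_{0,*}$ annihilates it. You should correct the justification accordingly rather than appeal to a nonexistent ``no $\overline{w}$'' property.
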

\begin{proof}
	Let $\underline{D}_{\pm, k}^L = D_{\pm, k}^L - ( \overline{\partial} - \delta^{1, 0} )$. Explicitly, by Lemma \ref{Lemma 5.15},
	\begin{equation*}
		\underline{D}_{\pm, k}^L = \partial \pm \tfrac{k}{\sqrt{-1}} \left( (\delta^{1, 0})^{-1}\omega + A + \tfrac{\sqrt{-1}}{k} B \right) \circledast_k^\pm.
	\end{equation*}
	Let $\mathcal{W}_{\operatorname{cl}, (r)}^{1, 0}$ be the subbundle of $\mathcal{W}_{\operatorname{cl}}^{1, 0}$ containing polynomials of degree at least $r$. We can check that $(\delta^{1, 0})^{-1}\underline{D}_{\pm, k}^L s \in \Omega^p(M, \mathcal{W}_{\operatorname{cl}, (r+1)}^{1, 0} \otimes L^{(\pm k)})$ for any $p, r \in \mathbb{N}$ and $s \in \Omega^p(M, \mathcal{W}_{\operatorname{cl}, (r)}^{1, 0} \otimes L^{(\pm k)})$. For any $s' \in \Omega^*(M, \mathcal{W}_{\operatorname{cl}}^{1, 0} \otimes L^{(\pm k)})$, we can show by iterative method that, there is a unique solution $s \in \Omega^*(M, \mathcal{W}_{\operatorname{cl}}^{1, 0} \otimes L^{(\pm k)})$ to the equation
	\begin{equation}
		s - (\delta^{1, 0})^{-1} \underline{D}_{\pm, k}^L s = s'.
	\end{equation}
	Thus, $\Psi_{\pm, k}$ is a linear isomorphism. We then prove that it is a cochain map. We have
	\begin{align*}
		\Psi_{\pm, k} D_{\pm, k}^L - (\overline{\partial} - \delta^{1, 0}) \Psi_{\pm, k} = \underline{D}_{\pm, k}^L - (\delta^{1, 0})^{-1}\underline{D}_{\pm, k}^L D_{\pm, k}^L + (\overline{\partial} - \delta^{1, 0})(\delta^{1, 0})^{-1} \underline{D}_{\pm, k}^L.
	\end{align*}
	Since $(\delta^{1, 0})^{-1} \overline{\partial} + \overline{\partial} (\delta^{1, 0})^{-1} = 0$ and $(D_{\pm, k}^L)^2 = 0$, 
	\begin{align*}
		(\delta^{1, 0})^{-1}\underline{D}_{\pm, k}^L D_{\pm, k}^L - (\overline{\partial} - \delta^{1, 0})(\delta^{1, 0})^{-1} \underline{D}_{\pm, k}^L = & (\delta^{1, 0})^{-1} (\underline{D}_{\pm, k}^L + \overline{\partial}) D_{\pm, k}^L + \delta^{1, 0} (\delta^{1, 0})^{-1} \underline{D}_{\pm, k}^L\\
		= & ((\delta^{1, 0})^{-1} \delta^{1, 0} + \delta^{1, 0} (\delta^{1, 0})^{-1}) \underline{D}_{\pm, k}^L.
	\end{align*}
	It remains to show that $\pi_{0, *}\underline{D}_{\pm, k}^L = 0$, which is straightforward.
\end{proof}

The inclusion $(\Omega^{0, *}(M, L^{(\pm k)}), \overline{\partial}) \hookrightarrow (\Omega^*(M, \mathcal{W}_{\operatorname{cl}}^{1, 0} \otimes L^{(\pm k)}), \overline{\partial} - \delta^{1, 0})$ is a quasi-isomorphism by weak Hodge decomposition. Therefore, we have a quasi-isomorphism of cochain complexes:
\begin{equation*}
	(\Omega^{0, *}(M, L^{(\pm k)}), \overline{\partial}) \to (\Omega^*(M, \mathcal{W}_{\operatorname{cl}}^{1, 0} \otimes L^{(\pm k)}), D_{\pm, k}^L), \quad s \mapsto \Psi_{\pm, k}^{-1}(s).
\end{equation*}
It implies that we have a homogeneous linear isomorphism
\begin{equation*}
	H^*(\Omega^*(M, \mathcal{W}_{\operatorname{cl}}^{1, 0} \otimes L^{(\pm k)}), D_{\pm, k}^L) \cong H_{\overline{\partial}}^{0, *}(M, L^{(\pm k)}).
\end{equation*}
In particular, $D_{\pm, k}^L$-flat sections of $\mathcal{W}_{\operatorname{cl}}^{1, 0} \otimes L^{(\pm k)}$ are in bijection with holomorphic sections of $L^{(\pm k)}$. Thus, $\circledast_k^\pm$ descend to operations $\mathcal{O}_{\operatorname{qu}}^{(k)} \times \mathcal{L}^{(\pm k)} \to \mathcal{L}^{(\pm k)}$, giving $\mathcal{L}^{(k)}$ a left $\mathcal{O}_{\operatorname{qu}}^{(k)}$-module structure and $\mathcal{L}^{(-k)}$ a right $\mathcal{O}_{\operatorname{qu}}^{(k)}$-module structure.\par
The locality of these constructions implies that the left and right $\mathcal{O}_{\operatorname{qu}}^{(k)}$-module structures induce morphisms of sheaves $\varphi_+: \mathcal{O}_{\operatorname{qu}}^{(k)} \to \mathcal{D}_{L^{(k)}}$ and $\varphi_-: \mathcal{O}_{\operatorname{qu}}^{(k)} \to \mathcal{D}_{L^{(-k)}}^\circ$. To show that $\varphi_+$ and $\varphi_-$ are isomorphisms, we introduce the following key lemma.

\begin{lemma}[Proposition 4.5 in \cite{ChaLeuLi2023}]
	\label{Lemma 5.17}
	Let $U$, $\rho_0$, $\rho_1$, $\rho$ be as in Example \ref{Example 5.4}. Let $\sigma_\pm$ be a local holomorphic frame of $L^{(\pm k)}$ over an open subset $U$ of $M$ such that $\nabla \sigma_\pm = \partial ( \pm k\rho_0 + \rho_1 ) \sigma_\pm$. Define $\Phi_\pm = \sum_{r=1}^\infty (\tilde{\nabla}^{1, 0})^r (\pm k \rho_0 + \rho_1)$ on $U$. Then
	\begin{enumerate}
		\item $D_{\pm, k}^L (e^{\Phi_\pm} \otimes \sigma_\pm ) = 0$;
		\item if $\xi$ is a local holomorphic vector field on $U$, then $\tfrac{k}{\sqrt{-1}} \operatorname{ev}_k( O_{\xi(\rho)} ) \circledast_k^\pm \left( e^{\Phi_\pm} \otimes \sigma_\pm \right) = 0$.
	\end{enumerate}
\end{lemma}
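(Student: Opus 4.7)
The argument is entirely local on $U$. Write $\varphi_\pm := \pm k\rho_0 + \rho_1$, so that $\nabla^{L^{(\pm k)}}\sigma_\pm = \partial\varphi_\pm\cdot\sigma_\pm$ and $\Phi_\pm = \sum_{r\geq 1}(\tilde\nabla^{1,0})^r\varphi_\pm$. Set $\Psi_\pm := \varphi_\pm + \Phi_\pm = \sum_{r\geq 0}(\tilde\nabla^{1,0})^r\varphi_\pm$; heuristically this is the formal holomorphic Taylor expansion of $\varphi_\pm$ in the $w$-variables, and it satisfies the fixed-point relation $\Psi_\pm = \varphi_\pm + (\delta^{1,0})^{-1}\nabla^{1,0}\Psi_\pm$.

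For part (1), I would split $D_{\pm,k}^L = D^{(1,0)} + D^{(0,1)}$ by form type. Since $(D_{\pm,k}^L)^2 = 0$ decomposes by bidegree, each of $(D^{(1,0)})^2 = 0$, $\{D^{(1,0)}, D^{(0,1)}\} = 0$, $(D^{(0,1)})^2 = 0$ holds separately, so it suffices to verify $D^{(1,0)}(e^{\Phi_\pm}\otimes\sigma_\pm) = 0$ and $D^{(0,1)}(e^{\Phi_\pm}\otimes\sigma_\pm) = 0$ individually. For the $(1,0)$-part, Lemma \ref{Lemma 5.15} yields the $k$-independent simplification $D^{(1,0)} = \partial^{L^{(\pm k)}} - \delta^{1,0}$; unfolding via Leibniz reduces the vanishing to the identity $(\nabla^{1,0} - \delta^{1,0})\Psi_\pm = 0$. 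Applying $\delta^{1,0}$ to the fixed-point equation and using $\delta^{1,0}(\delta^{1,0})^{-1} + (\delta^{1,0})^{-1}\delta^{1,0} = \operatorname{Id} - \pi_{0,*}$ (with $\pi_{0,*}$ vanishing on $(1,0)$-form-valued sections), the identity becomes equivalent to $\delta^{1,0}\nabla^{1,0}\Psi_\pm = 0$; the latter is established term-by-term via a K\"ahler-symmetry argument (the iterated covariant Hessians appearing in $\nabla^{1,0}(\tilde\nabla^{1,0})^r\varphi_\pm$ are symmetric in their bottom indices and pair against the antisymmetric wedge $dz^\rho\wedge dz^\alpha$ to give zero). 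For the $(0,1)$-part, the contribution $\overline\partial(e^{\Phi_\pm}) = (\overline\partial\Phi_\pm)e^{\Phi_\pm}$ must cancel against $\pm\tfrac{1}{\hbar}\bigl((\delta^{1,0})^{-1}\omega + A + \hbar B\bigr)\circledast_k^\pm$ applied to $e^{\Phi_\pm}\otimes\sigma_\pm$; invoking the iterative formulas for $A_{(r)}$ and $B_{(r)}$ together with $\omega = -\sqrt{-1}\partial\overline\partial\rho_0$ and $\omega_1 = -\sqrt{-1}\partial\overline\partial\rho_1$, the anti-Wick contractions reorganize by $w$-weight to produce $\mp(\overline\partial\Phi_\pm)e^{\Phi_\pm}\otimes\sigma_\pm$, completing the cancellation.

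For part (2), granted part (1), the section $e^{\Phi_\pm}\otimes\sigma_\pm$ is $D_{\pm,k}^L$-flat; combined with $D$-flatness of $O_{\xi(\rho)}$ (Example \ref{Example 5.4}), Proposition \ref{Proposition 5.14} ensures that $T := \tfrac{k}{\sqrt{-1}}\operatorname{ev}_k(O_{\xi(\rho)})\circledast_k^\pm(e^{\Phi_\pm}\otimes\sigma_\pm)$ is also $D_{\pm,k}^L$-flat. The cochain quasi-isomorphism of Lemma 5.16 implies that a $D_{\pm,k}^L$-flat section is determined by its $\pi_{0,*}$-projection (a holomorphic section of $L^{(\pm k)}$), so it suffices to show $\pi_{0,*}T = 0$. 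Expanding $O_{\xi(\rho)} = \sum_{r\geq 0}(\tilde\nabla^{1,0})^r(\xi(\rho) + \widehat\xi)$, the $r\geq 1$ summands contribute $w$-polynomials of positive $w$-degree under $\circledast_k^\pm$ and hence vanish under $\pi_{0,*}$; only the $r=0$ term $\xi(\rho) + \widehat\xi$ survives. Direct computation using $\widehat\xi = \xi^\mu\omega_{\mu\overline\nu}\overline w^\nu$ together with $\pi_{0,*}(\partial_{w^\lambda}\Phi_\pm) = \partial_\lambda\varphi_\pm$ (only the $r=1$ Taylor term survives after $\partial_{w^\lambda}$ and $\pi_{0,*}$) yields $\pi_{0,*}T = \tfrac{1}{\hbar}\bigl(\xi(\rho) \pm \hbar\,\xi(\varphi_\pm)\bigr)\sigma_\pm$; substituting $\hbar = \sqrt{-1}/k$ into $\rho = -\sqrt{-1}\rho_0 - \hbar\rho_1$ gives $\tfrac{1}{\hbar}\xi(\rho) = -k\xi(\rho_0) - \xi(\rho_1)$, and the two summands inside the bracket cancel.

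The main obstacle lies in the $(0,1)$-component computation of part (1): matching the explicit Fedosov gauge data $A$ and $B$ against $\overline\partial\Phi_\pm$ requires careful bookkeeping of anti-Wick contractions and the iterative structure of $\tilde\nabla^{1,0}$, and ultimately reflects the design principle that the metaplectic-corrected anti-Wick product is precisely tuned so that holomorphic frames of $L^{(\pm k)}$ lift canonically to flat sections of the Weyl bundle via the holomorphic Taylor expansion $\Psi_\pm$. A possible streamlined alternative is to argue directly that $e^{\Phi_\pm}\otimes\sigma_\pm$ agrees with $\Psi_\pm^{-1}(\sigma_\pm)$ under the cochain isomorphism of Lemma 5.16, from which flatness would follow automatically; but verifying this identification is of comparable technical weight.
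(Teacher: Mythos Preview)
Your part (2) contains a genuine error in the $\circledast^-$ case. You claim that the $r\geq 1$ summands of $O_{\xi(\rho)} = \sum_{r\geq 0}(\tilde\nabla^{1,0})^r(\xi(\rho)+\widehat\xi)$ contribute only positive $w$-degree terms under $\circledast_k^\pm$ and hence vanish under $\pi_{0,*}$. This is true for $\circledast^+$, because the definition $a\circledast^+ s = \hbar^q\omega^{\overline\nu\lambda}\,w^{\mu_1}\cdots w^{\mu_p}\,\partial_{w^\lambda}s$ leaves the $w^{\mu_i}$'s outside the derivative. But for $\circledast^-$ the derivative acts on the product $w^{\mu_1}\cdots w^{\mu_p}s$, so it can kill one of the $w^{\mu_i}$'s. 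Concretely, for the $r=1$ term $\tilde\nabla^{1,0}\widehat\xi = \xi^\eta\,\tfrac{\partial\omega_{\eta\overline\nu}}{\partial z^\mu}\,w^\mu\overline w^\nu$ one finds
\[
\pi_0\Bigl(\tfrac{1}{\hbar}\,\tilde\nabla^{1,0}\widehat\xi\;\circledast^-\;e^{\Phi_-}\Bigr)
= -\xi^\eta\,\omega^{\overline\nu\mu}\,\tfrac{\partial\omega_{\eta\overline\nu}}{\partial z^\mu}
= 2\,\xi(\rho_1),
\]
using (\ref{Equation 5.3}). Your own final formula gives $\pi_{0,*}T = \bigl(-k\xi(\rho_0)-\xi(\rho_1)\bigr)+\bigl(k\xi(\rho_0)\pm\xi(\rho_1)\bigr)$, which vanishes for $+$ but equals $-2\xi(\rho_1)$ for $-$; the missing $+2\xi(\rho_1)$ is exactly the $r=1$ contribution you dropped. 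The paper's proof keeps this term and shows that only $r=0,1$ survive under $\pi_0$ in the $-$ case.

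For part (1), your bidegree-splitting approach is different from the paper's and considerably more laborious. The paper does \emph{not} split into $(1,0)$ and $(0,1)$ parts; instead it uses the identity $A_{(r)}\circledast^+ e^{\Phi_\pm} = [A_{(r)},e^{\Phi_\pm}]_\star$ (and the analogous formula with a $B$-correction for $\circledast^-$) to convert the module action into the adjoint action. This immediately reduces the computation to $\bigl((\nabla-\delta+\tfrac{1}{\hbar}[A,\cdot]_\star)\Phi_\pm + \tfrac{1}{\hbar}(\pm(\delta^{1,0})^{-1}\omega+\hbar B)\bigr)e^{\Phi_\pm}$, which is recognized as $-\partial(\pm k\rho_0+\rho_1)\,e^{\Phi_\pm}$ via the Fedosov formalism and Example \ref{Example 5.4}. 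Your $(1,0)$-piece is fine, but your $(0,1)$-piece is only sketched; the commutator trick bypasses that bookkeeping entirely.
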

\begin{proof}
	Set $\hbar = \tfrac{\sqrt{-1}}{k}$. Write $\gamma$, $\circledast^\pm$, $\star$, $O_{\xi(\rho)}$ in place of $\gamma_k$, $\circledast_k^\pm$, $\star_k$, $\operatorname{ev}_k(O_{\xi(\rho)})$ respectively. Then
	\begin{equation*}
		\left( \nabla \pm \tfrac{1}{\hbar} \gamma \circledast^\pm \right) e^{\Phi_\pm} = (\nabla - \delta^{1, 0})\Phi_\pm \pm \tfrac{1}{\hbar}A \circledast^\pm e^{\Phi_\pm} \pm \tfrac{1}{\hbar} ((\delta^{1, 0})^{-1}\omega + \hbar B) e^{\Phi_\pm}.
	\end{equation*}
	First, we see that
	\begin{align*}
		A_{(r)} \circledast^+ e^{\Phi_\pm} = & \hbar \omega^{\overline{\mu}\lambda} \omega_{\eta\overline{\mu}} F_{\mu_1, ..., \mu_r, \overline{\beta}}^\eta d\overline{z}^\beta \otimes w^{\mu_1} \cdots w^{\mu_r} \frac{\partial e^{\Phi_\pm}}{\partial w^\lambda} = \left[ A_{(r)}, e^{\Phi_\pm} \right]_\star,\\
		A_{(r)} \circledast^- e^{\Phi_\pm} = & -\hbar \omega^{\overline{\mu}\lambda} \omega_{\eta\overline{\mu}} F_{\mu_1, ..., \mu_r, \overline{\beta}}^\eta d\overline{z}^\beta \otimes \frac{\partial}{\partial w^\lambda} \left( w^{\mu_1} \cdots w^{\mu_r} e^{\Phi_\pm} \right)\\
		= & -[A_{(r)} , e^{\Phi_\pm}]_\star - r\hbar F_{\eta, \mu_2, ..., \mu_r, \overline{\beta}}^\eta d\overline{z}^\beta \otimes w^{\mu_2} \cdots w^{\mu_r} e^{\Phi_\pm}\\
		= & -\left[A_{(r)}, e^{\Phi_\pm} \right]_\star - 2\hbar B_{(r-1)} e^{\Phi_\pm}.
	\end{align*}
	Next, note that $\delta^{0, 1} (e^{\Phi_\pm}) = 0$. We have
	\begin{align*}
		\left( \nabla \pm \tfrac{1}{\hbar} \gamma \circledast^\pm \right) e^{\Phi_\pm} = \left( (\nabla - \delta + \tfrac{1}{\hbar}[A, \quad]_\star) \Phi_\pm + \tfrac{1}{\hbar} (\pm (\delta^{1, 0})^{-1}\omega + \hbar B) \right) e^{\Phi_\pm} = -\partial (\pm k \rho_0 + \rho_1) e^{\Phi_\pm}.
	\end{align*}
	Therefore, $D_{\pm, k}^L (e^{\Phi_\pm} \otimes \sigma_\pm ) = 0$.\par
	Now suppose $\xi$ is a local holomorphic vector filed on $U$. If we can show that
	\begin{equation*}
		\pi_0 \left( \tfrac{1}{\hbar} O_{\xi(\rho)} \circledast^\pm \left( e^{\Phi_\pm} \otimes \sigma_\pm \right) \right) = \pi_0 \left( \tfrac{1}{\hbar} \left( \xi(\rho) + \widehat{\xi} + \tilde{\nabla}^{1, 0} \widehat{\xi} \right) \circledast^\pm \left( e^{\Phi_\pm} \otimes \sigma_\pm \right) \right)
	\end{equation*}
	vanishes, then the proof of the second statement in this lemma is complete. We see that
	\begin{align*}
		\pi_0 \left( \tfrac{1}{\hbar} \xi(\rho) \circledast^\pm e^{\Phi_\pm}  \right) = & \tfrac{1}{\hbar} \xi(\rho) = - \xi(k\rho_0 +  \rho_1),\\
		\pi_0 \left( \tfrac{1}{\hbar} \widehat{\xi} \circledast^\pm e^{\Phi_\pm} \right) = & \pm \omega^{\overline{\nu}\lambda} \frac{\partial \widehat{\xi}}{\partial \overline{w}^\nu} \frac{\partial}{\partial w^\lambda} \left( \tilde{\nabla}^{1, 0}(\pm k\rho_0 + \rho_1) \right) = \xi (k \rho_0 \pm \rho_1).
	\end{align*}
	Write $\xi = \xi^\mu \partial_{z^\mu}$. Since $\widehat{\xi} = \xi^\eta \omega_{\eta\overline{\nu}} \overline{w}^\nu$, $\tilde{\nabla}^{1, 0} \widehat{\xi} = \xi^\eta \frac{\partial \omega_{\eta\overline{\nu}}}{\partial z^\mu} w^\mu \overline{w}^\nu$. Thus,
	\begin{align*}
		\pi_0 \left( \tfrac{1}{\hbar} \tilde{\nabla}^{1, 0} \widehat{\xi} \circledast^+ e^{\Phi_+} \right) = & \pi_0 \left( \xi^\eta \omega^{\overline{\nu}\lambda} \frac{\partial \omega_{\eta\overline{\nu}}}{\partial z^\mu} w^\mu \frac{\partial e^{\Phi_+}}{\partial w^\lambda} \right) = 0,\\
		\pi_0 \left( \tfrac{1}{\hbar} \tilde{\nabla}^{1, 0} \widehat{\xi} \circledast^- e^{\Phi_-} \right) = & \pi_0 \left( -\xi^\eta \omega^{\overline{\nu}\lambda} \frac{\partial \omega_{\eta\overline{\nu}}}{\partial z^\mu} \frac{\partial}{\partial w^\lambda} \left( w^\mu e^{\Phi_-} \right) \right) = -\xi^\eta \omega^{\overline{\nu}\mu} \frac{\partial \omega_{\eta\overline{\nu}}}{\partial z^\mu} = 2\xi(\rho_1).
	\end{align*}
	The last equality is by (\ref{Equation 5.3}). To conclude, $\tfrac{1}{\hbar} O_{\xi(\rho)} \circledast^\pm \left( e^{\Phi_\pm} \otimes \sigma_\pm \right) = 0$.
\end{proof}

Lemma \ref{Lemma 5.17} implies that, if $f$ is a holomorphic function on an open subset $U$ of $M$, then $O_f e^{\Phi_\pm} \otimes \sigma_\pm$ is the unique $D_{\pm, k}^L$-flat section of $\mathcal{W}_{\operatorname{cl}}^{1, 0} \otimes L^{(\pm k)}$ such that $\pi_0(O_f e^{\Phi_\pm} \otimes \sigma_\pm) = f \sigma_\pm$. Therefore, if $s_\pm$ is a local holomorphic section of $L^{(\pm k)}$, then $\varphi_\pm(f)(s_\pm) = fs_\pm$. Now, one can easily see that the induction argument in the proof of Theorem 4.7 in \cite{ChaLeuLi2023} works for the injectivity of both $\varphi_+$ and $\varphi_-$. We also see that
\begin{equation*}
	\operatorname{ev}_k(\tfrac{1}{\hbar} O_{\xi(\rho)}) \circledast_k^\pm \left( O_f e^{\Phi_\pm} \otimes \sigma_\pm \right) = \pm \left[ \operatorname{ev}_k (\tfrac{1}{\hbar} O_{\xi(\rho)}), O_f \right]_{\star_k} e^{\Phi_\pm} \otimes \sigma_\pm = \pm O_{\xi(f)} e^{\Phi_\pm} \otimes \sigma_\pm,
\end{equation*}
verifying the surjectivity of both $\varphi_+$ and $\varphi_-$. Therefore, $\varphi_+$ and $\varphi_-$ are isomorphisms of TDOs.

\subsection{A sheaf of algebras $\overline{\mathcal{O}}_{\operatorname{qu}}^{(k)}$ realizing $\operatorname{Hom}(\overline{\mathcal{B}}_{\operatorname{cc}}^{(k)}, \overline{\mathcal{B}}_{\operatorname{cc}}^{(k)})$ in relation to $\mathcal{O}_{\operatorname{qu}}^{(k)}$}
\label{Subsection 5.4}
\quad\par
Consider another K\"ahler manifold $(\overline{M}, -\omega)$, where $\overline{M}$ is the conjugate complex manifold of $M$, which shares the same K\"ahler metric with $(M, \omega)$. We can apply the constructions in Subsection \ref{Subsection 5.1} to obtain the $(\overline{M}, -\omega)$-counterparts $\check{A}$, $\check{B}$, $\check{\gamma}$, $\check{D}$, $\underline{\check{\mathcal{W}}}$, $\underline{\check{\mathcal{W}}}_{\operatorname{cl}}$, $\check{\star}$, $\check{\circledast}^\pm$ of $A$, $B$, $\gamma$, $D$, $\underline{\mathcal{W}}$, $\underline{\mathcal{W}}_{\operatorname{cl}}$, $\star$, $\circledast^\pm$ respectively. It is intriguing to compare the deformation quantizations $(\mathcal{C}^\infty(M, \mathbb{C})[[\hbar]], \star)$ and $(\mathcal{C}^\infty(M, \mathbb{C})[[\hbar]], \check{\star})$, which have Fedosov's characteristic classes $-\tfrac{1}{\hbar}[\omega]$ and $-\tfrac{1}{\hbar}[-\omega]$ respectively.\par
Clearly, $(\mathcal{C}^\infty(M, \mathbb{C})[[\hbar]], \star)$ and $(\mathcal{C}^\infty(M, \mathbb{C})[[\hbar]], \check{\star})$ are equivalent if and only if $\omega$ is exact. By \cite{Bur2002} (see also \cite{BurDolWal2012, BurWal2002}), $(\mathcal{C}^\infty(M, \mathbb{C})[[\hbar]], \star)$ and $(\mathcal{C}^\infty(M, \mathbb{C})[[\hbar]], \check{\star})$ are Morita equivalent if and only if there is a diffeomorphism $\phi: M \to M$ and a complex line bundle $L_0$ over $M$ such that
\begin{equation*}
	-\tfrac{1}{\hbar}[\phi^*\omega] + 2\pi\sqrt{-1}c_1(L_0) = -\tfrac{1}{\hbar}[-\omega],
\end{equation*}
or equivalently, $[\phi^*\omega] = [-\omega]$ and $c_1(L_0) = 0$. When $M$ is compact, by Moser's trick, we can see that $(\mathcal{C}^\infty(M, \mathbb{C})[[\hbar]], \star)$ and $(\mathcal{C}^\infty(M, \mathbb{C})[[\hbar]], \check{\star})$ are Morita equivalent if and only if there is a diffeomorphism $\phi: M \to M$ such that $\phi^*\omega = -\omega$.\par
Indeed, $(\mathcal{C}^\infty(M, \mathbb{C})[[\hbar]], \check{\star})$ is the `conjugate' of $(\mathcal{C}^\infty(M, \mathbb{C})[[\hbar]], \star)$ in the following sense. Extend the usual complex conjugation on $\mathbb{C}$ to $\mathbb{C}[[\hbar]]$ by requiring that $\overline{\hbar} = -\hbar$. By a \emph{conjugate} $\mathbb{F}$\emph{-algebra isomorphism}, where $\mathbb{F}$ is either $\mathbb{C}$ or $\mathbb{C}[[\hbar]]$, we mean an $\mathbb{R}$-algebra isomorphism $\phi: \mathcal{A}_1 \to \mathcal{A}_2$ between two $\mathbb{F}$-algebras $\mathcal{A}_1, \mathcal{A}_2$ such that for all $\lambda \in \mathbb{F}$ and $a \in \mathcal{A}_1$, $\phi(\lambda a) = \overline{\lambda} \phi(a)$.

\begin{proposition}
	The following map is a conjugate $\mathbb{C}[[\hbar]]$-algebra isomorphism:
	\begin{equation*}
		(\mathcal{C}^\infty(M, \mathbb{C})[[\hbar]], \star) \to (\mathcal{C}^\infty(M, \mathbb{C})[[\hbar]], \check{\star}), \quad f \mapsto \overline{f},
	\end{equation*}
\end{proposition}
\begin{proof}
	By direct computations, we have $\overline{A} = -\check{A}$ and $\overline{B} = \check{B}$, whence $\overline{\gamma} = -\check{\gamma}$. It is easy to check that, for all $a, b \in \Omega^*(M, \mathcal{W})$, $\overline{a \star b} = \overline{a} \check{\star} \overline{b}$. These imply that for all $a \in \Omega^*(M, \mathcal{W})$, $\overline{Da} = \check{D}\overline{a}$. To complete the proof, it remains to pass the above observation to the cohomological level.
\end{proof}

For non-zero $k \in \mathbb{C}$, let $\overline{\mathcal{O}}_{\operatorname{qu}}^{(k)}$ be the sheaf of quantizable functions of level $k$ on $(\overline{M}, -\omega)$.

\begin{proposition}
	For all $k \in \mathbb{C} \backslash \{0\}$, the following map is a conjugate $\mathbb{C}$-algebra isomorphism:
	\begin{equation}
		\label{Equation 5.014}
		(\mathcal{O}_{\operatorname{qu}}^{(k)}, \star_k) \to (\overline{\mathcal{O}}_{\operatorname{qu}}^{(\overline{k})}, \check{\star}_{\overline{k}}), \quad f \mapsto \overline{f}.
	\end{equation}
\end{proposition}
\begin{proof}
	It suffices to observe that for all $a \in \Gamma(M, \underline{\mathcal{W}}_{\operatorname{cl}})$, $\overline{a} \in \Gamma(M, \underline{\check{\mathcal{W}}}_{\operatorname{cl}})$.
\end{proof}

Suppose that $(M, \omega)$ admits a prequantum line bundle $(L, \nabla^L)$ and a square root $\sqrt{K}$ of its canonical bundle. Then the dual of $(L, \nabla^L)$ is a prequantum line bundle of $(M, -\omega)$ and $\sqrt{K}^\vee$ is a square root of the canonical bundle of $\overline{M}$. Now, we fix $k \in \mathbb{Z}^+$. By Theorem \ref{Theorem 5.9} and Corollary \ref{Corollary 5.10}, the sheaf $\overline{\mathcal{L}}^{(k)}$ of antiholomorphic sections of $\overline{L}^{(k)} := (L^\vee)^{\otimes k} \otimes \sqrt{K}^\vee$ is a left $\overline{\mathcal{O}}_{\operatorname{qu}}^{(k)}$-module and the sheaf $\overline{\mathcal{L}}^{(-k)}$ of antiholomorphic sections of $\overline{L}^{(-k)} := L^{\otimes k} \otimes \sqrt{K}^\vee$ is a right $\overline{\mathcal{O}}_{\operatorname{qu}}^{(k)}$-module.\par
Note that the $(\overline{M}, -\omega)$-counterpart of $\mathcal{W}_{\operatorname{cl}}^{1, 0}$ is $\mathcal{W}_{\operatorname{cl}}^{0, 1} := \widehat{\operatorname{Sym}} T^{\vee (0, 1)}M$. Let $h^{(k)}$ be the Hermitian metric on $L^{(k)}$. Define a complex conjugate linear isomorphism
\begin{equation*}
	\Omega^*(M, \mathcal{W}_{\operatorname{cl}}^{1, 0} \otimes L^{(k)}) \to \Omega^*(M, \mathcal{W}_{\operatorname{cl}}^{0, 1} \otimes \overline{L}^{(k)}), \quad s \mapsto s^*,
\end{equation*}
as follows. For $a \in \Omega^*(M, \mathcal{W}_{\operatorname{cl}}^{1, 0})$ and $s \in \Gamma(M, L^{(k)})$, $(a \otimes s)^* := \overline{a} \otimes h^{(k)}(\quad, s)$. It satisfies the property that, if $s \in \Gamma(M, L^{(k)})$ is holomorphic, then $s^* \in \Gamma(M, {\overline{L}}^{(k)})$ is anti-holomorphic.

\begin{proposition}
	The following diagram is commutative:
	\begin{center}
		\begin{tikzcd}
			\mathcal{O}_{\operatorname{qu}}^{(k)} \ar[r] \ar[d] & \overline{\mathcal{O}}_{\operatorname{qu}}^{(k)} \ar[d]\\
			\mathcal{D}_{L^{(k)}} \ar[r] & \mathcal{D}_{\overline{L}^{(k)}}
		\end{tikzcd}
	\end{center}
	Here, the upper horizontal arrow is (\ref{Equation 5.014}), the lower horizontal arrow is the conjugation action by the map $H^0(M, L^{(k)}) \to H^0(\overline{M}, \overline{L}^{(k)})$ given by $s \mapsto s^*$, and the vertical arrows are the isomorphisms given as in Theorem \ref{Theorem 5.9}.
\end{proposition}
\begin{proof}
	It suffices to observe that for all $a \in \Gamma(M, \underline{\mathcal{W}}_{\operatorname{cl}})$ and $s \in \Omega^*(M, \mathcal{W}_{\operatorname{cl}}^{1, 0} \otimes L^{(k)})$,
	\begin{equation*}
		(a \circledast_k^+ s)^* = \overline{a} \check{\circledast}_k^+ s^*,
	\end{equation*}
	and $(D_{+, k}^L s)^* = \check{D}_{+, k}^L (s^*)$, where $\check{D}_{+, k}^L$ is the $(\overline{M}, -\omega)$-counterpart of $D_{+, k}^L$.
\end{proof}

\subsection{A sheaf of bimodules $\mathcal{L}_{\operatorname{sm}}^{(2k)}$ realizing $\operatorname{Hom}(\overline{\mathcal{B}}_{\operatorname{cc}}^{(k)}, \mathcal{B}_{\operatorname{cc}}^{(k)})$}
\label{Subsection 5.5}
\quad\par
In this subsection, assume that $(M, \omega)$ admits a prequantum line bundle $(L, \nabla^L)$ and $k \in \mathbb{Z}^+$. We will prove the following theorem.

\begin{theorem}[$=$ Theorem \ref{Theorem 1.1}]
	The sheaf $\mathcal{L}_{\operatorname{sm}}^{(2k)}$ of smooth sections of $L^{\otimes 2k}$ has an $\mathcal{O}_{\operatorname{qu}}^{(k)}$-$\overline{\mathcal{O}}_{\operatorname{qu}}^{(k)}$ bimodule structure such that if $M$ is spin (with a chosen square root of its canonical bundle), then the natural pairing
	\begin{equation}
		\label{Equation 5.14}
		\mathcal{L}^{(k)} \otimes \overline{\mathcal{L}}^{(-k)} \to \mathcal{L}_{\operatorname{sm}}^{(2k)}
	\end{equation}
	is a morphism of $\mathcal{O}_{\operatorname{qu}}^{(k)}$-$\overline{\mathcal{O}}_{\operatorname{qu}}^{(k)}$ bimodules.
\end{theorem}

\begin{remark}
	\label{Remark 5.24}
	Our construction of the bimodule structure on $\mathcal{L}_{\operatorname{sm}}^{(2k)}$ looks very similar to Neumaier-Waldmann's construction \cite{NeuWal2003} of a Morita equivalence bimodule for Wick type star products, but they are not the same. If there were a Morita equivalence bimodule structure on $\Gamma(M, L_0)[[\hbar]]$ for the two star products $\star$ and $\check{\star}$, then Fedosov's characteristic classes of $(\mathcal{C}^\infty(M, \mathbb{C})[[\hbar]], \star)$ and $(\mathcal{C}^\infty(M, \mathbb{C})[[\hbar]], \check{\star})$ would differ by the constant term $2\pi\sqrt{-1}c_1(L_0)$ in $\hbar$ (see (67) in \cite{NeuWal2003}). However, the difference is actually $-\tfrac{1}{\hbar}[2\omega]$, not a constant term in $\hbar$ unless $\omega$ is exact. Upon evaluation at $\hbar = \tfrac{\sqrt{-1}}{k}$, this difference is equal to, up to a complex scalar multiple, $c_1(L^{\otimes 2k})$.
\end{remark}

To prove Theorem \ref{Theorem 1.1}, we first define a connection $D_k^L$ on $\mathcal{W}_{\operatorname{cl}} \otimes L^{\otimes 2k}$ by
\begin{equation}
	D_k^L := \left( \nabla + \tfrac{k}{\sqrt{-1}} \gamma_k \circledast_k^+ - \tfrac{k}{\sqrt{-1}} \check{\gamma}_k \check{\circledast}_k^- \right) \otimes \operatorname{Id} + \operatorname{Id} \otimes \nabla^{L^{\otimes 2k}},
\end{equation}
where $\mathcal{W}_{\operatorname{cl}} = \widehat{\operatorname{Sym}} T^\vee M_\mathbb{C}$ and $\nabla^{L^{\otimes 2k}}$ is the Chern connection on $L^{\otimes 2k}$. Again, we will show two basic properties of $D_k^L$ in the following propositions.

\begin{proposition}
	The connection $D_k^L$ is flat.
\end{proposition}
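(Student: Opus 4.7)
The plan is to exploit the K\"ahler tensor-product structure $\mathcal{W}_{\operatorname{cl}} = \mathcal{W}_{\operatorname{cl}}^{1, 0} \otimes \mathcal{W}_{\operatorname{cl}}^{0, 1}$ and apply Proposition~\ref{Proposition 5.13} twice --- once to $(M, \omega)$ with the ``$+$'' sign and once to $(\overline{M}, -\omega)$ with the ``$-$'' sign. Setting $\hbar = \tfrac{\sqrt{-1}}{k}$, I write $\Theta := \nabla + \tfrac{1}{\hbar}\gamma \circledast^+ - \tfrac{1}{\hbar}\check{\gamma}\check{\circledast}^-$, so that $D_k^L = \Theta \otimes \operatorname{Id} + \operatorname{Id} \otimes \nabla^{L^{\otimes 2k}}$. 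By the same tensor-product computation used in the proof of Proposition~\ref{Proposition 5.13},
\begin{equation*}
	(D_k^L)^2 = \Theta^2 \otimes \operatorname{Id} + \operatorname{Id} \otimes (\nabla^{L^{\otimes 2k}})^2,
\end{equation*}
so it suffices to show $\Theta^2 = -(\nabla^{L^{\otimes 2k}})^2 = -\tfrac{2k}{\sqrt{-1}}\omega$ as wedge operators by scalar $(1,1)$-forms.

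Since $(M, \omega)$ is K\"ahler, $\nabla$ preserves the splitting, so $\nabla = \nabla_h \otimes \operatorname{Id} + \operatorname{Id} \otimes \nabla_{ah}$. Inspecting the defining formula for $\circledast^+$ shows that $\gamma \circledast^+$ involves only multiplications by and derivatives in the holomorphic fibre coordinates $w^\mu$ --- the $\overline{w}^\mu$ components of $\gamma$ enter only as input to $\circledast^+$ and are converted into $w$-derivatives via $\omega^{\overline{\nu}\lambda}$ --- hence acts on the $\mathcal{W}_{\operatorname{cl}}^{1, 0}$ factor alone. Under the identification $\check{\mathcal{W}}_{\operatorname{cl}}^{1, 0} \cong \mathcal{W}_{\operatorname{cl}}^{0, 1}$ between the holomorphic Weyl bundle of $(\overline{M}, -\omega)$ and the antiholomorphic Weyl bundle of $(M, \omega)$, the operator $\check{\gamma}\check{\circledast}^-$ analogously acts only on the $\mathcal{W}_{\operatorname{cl}}^{0, 1}$ factor. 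Setting $P := \nabla_h + \tfrac{1}{\hbar}\gamma \circledast^+$ and $Q := \nabla_{ah} - \tfrac{1}{\hbar}\check{\gamma}\check{\circledast}^-$, the graded anticommutator $\{P, Q\}$ vanishes because $P$ and $Q$ are form-degree-$1$ operators on disjoint tensor factors; hence $\Theta^2 = P^2 + Q^2$.

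Proposition~\ref{Proposition 5.13} applied with the ``$+$'' sign to $(M, \omega)$ directly yields $P^2 = \sqrt{-1}(k\omega + \omega_1)$; applied with the ``$-$'' sign to $(\overline{M}, -\omega)$, it gives $Q^2 = \sqrt{-1}(k\omega - \omega_1)$ after using the identity $\check{\omega}_1 = -\omega_1$. This sign identity follows from Remark~\ref{Remark 5.2}: since $\det(-\sqrt{-1}\check{\omega}_{\alpha\overline{\beta}}) = \det(-\sqrt{-1}\omega_{\alpha\overline{\beta}})$, one has $\check{\rho}_1 = \rho_1$ as real functions, while the swap of holomorphic and antiholomorphic directions on $\overline{M}$ reverses the sign of $\partial\overline{\partial}$ as an operator valued in $(1,1)$-forms. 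Summing, $\Theta^2 = 2\sqrt{-1}k\omega = -\tfrac{2k}{\sqrt{-1}}\omega$, which completes the proof. The main obstacle I foresee is the sign-bookkeeping needed to confirm $\check{\omega}_1 = -\omega_1$ together with the precise relationship between $\check{\gamma}, \check{\circledast}^-$ and the data of $(M, \omega)$ under conjugation, and the careful verification that the fibrewise operations genuinely act on disjoint tensor factors so that $\{P, Q\} = 0$.
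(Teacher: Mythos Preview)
Your approach is correct and is essentially the paper's argument in different clothing: the paper handles the cross terms via the identity $a \circledast^+ (b \check{\circledast}^- s) = (-1)^{\lvert a \rvert \lvert b \rvert} b \check{\circledast}^- (a \circledast^+ s)$ (which is exactly your ``disjoint tensor factors'' observation for the algebraic parts) and then repeats the computation from the proof of Proposition~\ref{Proposition 5.13} to get $\Theta^2 = -\tfrac{1}{\sqrt{-1}}(k\omega + \omega_1 - k(-\omega) + (-\omega_1)) = -\tfrac{2k}{\sqrt{-1}}\omega$, using that the $(\overline{M},-\omega)$-counterpart of $\omega_1$ is $-\omega_1$ just as you do.

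One technical point to tighten: your operators $P = \nabla_h + \tfrac{1}{\hbar}\gamma\circledast^+$ and $Q = \nabla_{ah} - \tfrac{1}{\hbar}\check{\gamma}\check{\circledast}^-$ are not separately well-defined on $\Omega^*(M,\mathcal{W}_{\operatorname{cl}})$, since the exterior derivative hidden in $\nabla$ cannot be split across tensor factors (neither $\nabla_h \otimes \operatorname{Id}$ nor $\operatorname{Id} \otimes \nabla_{ah}$ is $\mathcal{C}^\infty$-linear in the other slot), so the expression ``$\{P,Q\}=0$'' is not literally meaningful. The clean formulation of what you are doing is: $\Theta$ is the tensor product of the connection $\nabla + \tfrac{1}{\hbar}\gamma\circledast^+$ on $\mathcal{W}_{\operatorname{cl}}^{1,0}$ with the connection $\nabla - \tfrac{1}{\hbar}\check{\gamma}\check{\circledast}^-$ on $\mathcal{W}_{\operatorname{cl}}^{0,1}$, and the curvature of a tensor product connection is the sum of the curvatures on the factors---each of which is the scalar $(1,1)$-form already computed in the proof of Proposition~\ref{Proposition 5.13}.
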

\begin{proof}
	Note that for $s \in \Omega^*(M, \mathcal{W})$, $a \in \Omega^*(M, \underline{\mathcal{W}})$ and $b \in \Omega^*(M, \underline{\check{\mathcal{W}}})$ with $a, b$ homogeneous,
	\begin{equation}
		\label{Equation 5.16}
		a \circledast^+ (b \check{\circledast}^- s) = (-1)^{\lvert a \rvert \lvert b \rvert} b \check{\circledast}^- (a \circledast^+ s).
	\end{equation}
	Also note that the $(\overline{M}, -\omega)$-counterpart of $\omega_1$ is $-\omega_1$. Mimicking the proof of Proposition \ref{Proposition 5.13} and using (\ref{Equation 5.16}), we can show that on $\Omega^*(M, \mathcal{W}_{\operatorname{cl}})$,
	\begin{equation*}
		\left( \nabla + \tfrac{k}{\sqrt{-1}} \gamma_k \circledast_k^+ - \tfrac{k}{\sqrt{-1}} \check{\gamma}_k \circledast_k^- \right)^2 = -\tfrac{1}{\sqrt{-1}}(k\omega + \omega_1 -k(-\omega) + (-\omega_1)) = -\tfrac{2k}{\sqrt{-1}} \omega.
	\end{equation*}
	The curvature of $\nabla^{L^{\otimes 2k}}$ is $\tfrac{2k}{\sqrt{-1}} \omega$. Therefore, $(D_k^L)^2 = 0$.
\end{proof}

\begin{proposition}
	We have
	\begin{align*}
		D_k^L (a \circledast_k^+ s) = & (D_ka) \circledast_k^+ s + (-1)^{\lvert a \rvert} a \circledast_k^+ D_k^L s,\\
		D_k^L (a \check{\circledast}_k^- s) = & (\check{D}_k a) \check{\circledast}_k^- s + (-1)^{\lvert a \rvert} a \check{\circledast}_k^- D_k^L s.
	\end{align*}
\end{proposition}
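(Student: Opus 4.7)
The plan is to decompose $D_k^L = (\nabla \otimes \operatorname{Id} + \operatorname{Id} \otimes \nabla^{L^{\otimes 2k}}) + \tfrac{k}{\sqrt{-1}} \gamma_k \circledast_k^+ - \tfrac{k}{\sqrt{-1}} \check{\gamma}_k \check{\circledast}_k^-$ into its three pieces and to verify a graded Leibniz identity for each piece separately. The two desired equalities then follow by summation, matching the assembled derivation on the left tensor factor to $D_k a$ in the first identity and to $\check{D}_k a$ in the second. To keep notation light I set $\hbar = \tfrac{\sqrt{-1}}{k}$ and suppress the subscripts $k$.

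For the first identity, the connection piece $\nabla + \nabla^{L^{\otimes 2k}}$ acts as a graded derivation of $\circledast^+$ by the same computation as in the proof of Proposition \ref{Proposition 5.14}. For the left-action piece $\tfrac{1}{\hbar}\gamma \circledast^+$, I would use the associativity $a \circledast^+ (b \circledast^+ s) = (a \star b) \circledast^+ s$ together with the odd-degree graded commutation $\gamma \star a = [\gamma, a]_\star + (-1)^{|a|} a \star \gamma$ to produce the contribution $\tfrac{1}{\hbar}[\gamma, a]_\star \circledast^+ s + (-1)^{|a|} a \circledast^+ (\tfrac{1}{\hbar}\gamma \circledast^+ s)$. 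For the right-action piece $-\tfrac{1}{\hbar}\check{\gamma}\check{\circledast}^-$, equation (\ref{Equation 5.16}) with $b = \check{\gamma}$ of odd form degree yields $\check{\gamma}\check{\circledast}^-(a \circledast^+ s) = (-1)^{|a|} a \circledast^+ (\check{\gamma}\check{\circledast}^- s)$, so this piece passes through $a \circledast^+$ with only a Koszul sign and no commutator remainder. Summing the three contributions and using $D_k a = \nabla a + \tfrac{1}{\hbar}[\gamma, a]_\star$ yields the first identity.

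The second identity is proved by the symmetric argument on $\overline{M}$. Here $\tfrac{1}{\hbar}\gamma \circledast^+$ slides past $a \check{\circledast}^-$ via another application of equation (\ref{Equation 5.16}), producing a pure $(-1)^{|a|}$ and no commutator, while $-\tfrac{1}{\hbar}\check{\gamma}\check{\circledast}^-$ applied to $a \check{\circledast}^- s$ is handled by the $\check{\circledast}^-$-associativity identity $a \check{\circledast}^-(b \check{\circledast}^- s) = -[a, b]_{\check{\star}} \check{\circledast}^- s + (-1)^{|a||b|} b \check{\circledast}^-(a \check{\circledast}^- s)$, which is the $\overline{M}$-counterpart of the $\circledast^-$-identity recorded in Subsection \ref{Subsection 5.2}. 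The extra minus sign in this identity cancels the prefactor $-\tfrac{1}{\hbar}$, producing $\tfrac{1}{\hbar}[\check{\gamma}, a]_{\check{\star}}\check{\circledast}^- s + (-1)^{|a|} a \check{\circledast}^-(-\tfrac{1}{\hbar}\check{\gamma}\check{\circledast}^- s)$, and the assembled derivation $\nabla a + \tfrac{1}{\hbar}[\check{\gamma}, a]_{\check{\star}}$ on the left factor is exactly $\check{D}_k a$. No real obstacle arises; the proposition is essentially a two-sided version of Proposition \ref{Proposition 5.14}, whose only novelty is the interplay between the two actions, and this interplay is already encoded in equation (\ref{Equation 5.16}).
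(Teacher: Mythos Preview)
Your proposal is correct and takes essentially the same approach as the paper, which simply writes ``The proof is similar to that of Proposition \ref{Proposition 5.14} and we skip it.'' Your decomposition into the connection piece, the $\gamma\circledast^+$ piece, and the $\check{\gamma}\check{\circledast}^-$ piece, together with the use of (\ref{Equation 5.16}) to let the ``foreign'' action slide past with only a Koszul sign, is exactly the intended argument.
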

\begin{proof}
	The proof is similar to that of Proposition \ref{Proposition 5.14} and we skip it.
\end{proof}

\begin{lemma}
	The map
	\begin{equation*}
		\Psi_k: (\Omega^*(M, \mathcal{W}_{\operatorname{cl}} \otimes L^{\otimes 2k}), D_k^L) \to (\Omega^*(M, \mathcal{W}_{\operatorname{cl}} \otimes L^{\otimes 2k}), -\delta),
	\end{equation*}
	defined by $s \mapsto s - \delta^{-1} (D_k^L + \delta) s$, is a cochain isomorphism.
\end{lemma}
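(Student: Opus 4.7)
The plan is to adapt the argument used for the previous lemma on $\Psi_{\pm, k}$, exploiting the polynomial (Weyl) degree filtration $\{ \mathcal{W}_{\operatorname{cl}, (r)} \}_r$ on $\mathcal{W}_{\operatorname{cl}}$.

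First, I would show that $\delta^{-1} \underline{D}_k^L$, where $\underline{D}_k^L := D_k^L + \delta$, strictly raises the Weyl degree. By Lemma \ref{Lemma 5.15}, the $((\delta^{0, 1})^{-1}\omega) \circledast_k^+$-piece of $\tfrac{k}{\sqrt{-1}} \gamma_k \circledast_k^+$ equals $-\delta^{1, 0}$, and the $(\overline{M}, -\omega)$-analogue shows that the corresponding piece of $-\tfrac{k}{\sqrt{-1}} \check{\gamma}_k \check{\circledast}_k^-$ equals $-\delta^{0, 1}$. These exactly cancel $\delta = \delta^{1, 0} + \delta^{0, 1}$ in $\underline{D}_k^L$, leaving only $\nabla + \nabla^{L^{\otimes 2k}}$, the multiplicative $((\delta^{1, 0})^{-1}\omega) \circledast_k^+$-piece and its conjugate, and the higher-order terms from $A, B, \check{A}, \check{B}$. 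Each of these either preserves or strictly raises the Weyl degree while also raising form degree by one. Since $\delta^{-1}$ raises Weyl degree by one and lowers form degree by one, the composition $\delta^{-1}\underline{D}_k^L$ strictly raises Weyl degree. Hence, for any $s' \in \Omega^*(M, \mathcal{W}_{\operatorname{cl}} \otimes L^{\otimes 2k})$, the equation $s - \delta^{-1} \underline{D}_k^L s = s'$ admits a unique iterative solution convergent in the completed Weyl filtration, so $\Psi_k$ is a linear isomorphism.

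Next, I would verify that $\Psi_k$ is a cochain map, i.e.\ $\Psi_k D_k^L + \delta \Psi_k = 0$. Expanding and using $(D_k^L)^2 = 0$, the identity $\delta \delta^{-1} \delta = \delta$, and the Koszul identity $\delta^{-1} \delta + \delta \delta^{-1} = \operatorname{Id} - \pi_0$, one computes
\begin{equation*}
\Psi_k D_k^L + \delta \Psi_k = D_k^L - \delta^{-1} \delta D_k^L - \delta \delta^{-1} D_k^L = \pi_0 D_k^L,
\end{equation*}
which vanishes because each summand of $D_k^L$ raises the form degree.

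The main obstacle is the clean cancellation of $\delta$ inside $\underline{D}_k^L$. This relies on the $(\overline{M}, -\omega)$-analogue of Lemma \ref{Lemma 5.15}, where the sign conventions under $z \leftrightarrow \overline{z}$ and $\omega \mapsto -\omega$ must match precisely so that the $\delta^{1, 0}$-contribution from $\gamma_k \circledast_k^+$ and the $\delta^{0, 1}$-contribution from $\check{\gamma}_k \check{\circledast}_k^-$ combine to cancel $\delta$ exactly. Once this cancellation is established, the remaining details parallel the proof of the previous lemma without substantial change.
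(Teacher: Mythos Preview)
Your proposal is correct and follows essentially the same approach as the paper: you use Lemma~\ref{Lemma 5.15} and its $(\overline{M},-\omega)$-analogue to cancel $\delta$ inside $\underline{D}_k^L$, deduce that $\delta^{-1}\underline{D}_k^L$ strictly raises Weyl degree so that $\Psi_k$ is invertible by iteration, and then verify the cochain identity via $(D_k^L)^2=0$ and the Koszul relation $\delta\delta^{-1}+\delta^{-1}\delta=\operatorname{Id}-\pi_0$. The only cosmetic difference is that the paper writes the final obstruction as $\pi_0\underline{D}_k^L$ rather than $\pi_0 D_k^L$, but since $\pi_0\delta=0$ these coincide.
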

\begin{proof}
	Let $\underline{D}_k^L = D_k^L + \delta$ and $k' = \tfrac{k}{\sqrt{-1}}$. The $(\overline{M}, -\omega)$-counterpart of $\delta^{1, 0}$ is $\delta^{0, 1}$. By Lemma \ref{Lemma 5.15},
	\begin{equation*}
		\underline{D}_k^L = \left(\nabla + \left(k' \left( (\delta^{1, 0})^{-1}\omega + A \right) + B \right) \circledast_k^+ - \left( k' \left( (\delta^{0, 1})^{-1} (-\omega) + \check{A} \right)+ \check{B} \right) \check{\circledast}_k^-\right) \otimes \operatorname{Id} + \operatorname{Id} \otimes \nabla^{L^{\otimes 2k}}.
	\end{equation*}
	Let $\mathcal{W}_{\operatorname{cl}, (r)}$ be the subbundle of $\mathcal{W}_{\operatorname{cl}}$ containing polynomials of degree at least $r$. We can check that $\delta^{-1}\underline{D}_k^L s \in \Omega^p(M, \mathcal{W}_{\operatorname{cl}, (r+1)} \otimes L^{\otimes 2k})$ for any $p, r \in \mathbb{N}$ and $s \in \Omega^p(M, \mathcal{W}_{\operatorname{cl}, (r)} \otimes L^{\otimes 2k})$. By iterative method, we can show that, for any $s' \in \Omega^*(M, \mathcal{W}_{\operatorname{cl}} \otimes L^{\otimes 2k})$, there is a unique solution $s \in \Omega^*(M, \mathcal{W}_{\operatorname{cl}} \otimes L^{\otimes 2k})$ to the equation
	\begin{equation}
		s - \delta^{-1} \underline{D}_k^L s = s'.
	\end{equation}
	In other words, $\Psi_k$ is a linear isomorphism. On the other hand, as $(D_k^L)^2 = \delta^2 = 0$,
	\begin{align*}
		\Psi_k D_k^L + \delta \Psi_k = D_k^L + \delta - \delta^{-1}\underline{D}_k^L D_k^L - \delta\delta^{-1} \underline{D}_k^L = \underline{D}_k^L - \delta^{-1}\delta \underline{D}_k^L - \delta\delta^{-1} \underline{D}_k^L = \pi_0\underline{D}_k^L = 0.
	\end{align*}
\end{proof}

\begin{proof}[\myproof{Theorem}{\ref{Theorem 1.1}}]
The inclusion $\Gamma(M, L^{\otimes 2k}) \hookrightarrow \Omega^*(M, \mathcal{W}_{\operatorname{cl}} \otimes L^{\otimes 2k})$ is a quasi-isomorphism from $(\Gamma(M, L^{\otimes 2k}), 0)$ to $(\Omega^*(M, \mathcal{W}_{\operatorname{cl}} \otimes L^{\otimes 2k}), -\delta)$ by weak Hodge decomposition. Therefore, we have a quasi-isomorphism of cochain complexes:
\begin{equation*}
	(\Gamma(M, L^{\otimes 2k}), 0) \to (\Omega^*(M, \mathcal{W}_{\operatorname{cl}} \otimes L^{\otimes 2k}), D_k^L), \quad s \mapsto \Psi_k^{-1}(s).
\end{equation*}
It implies that for all $p > 0$, $H^p(\Omega^*(M, \mathcal{W}_{\operatorname{cl}} \otimes L^{\otimes 2k}), D_k^L) = 0$ and we have a linear isomorphism
\begin{equation*}
	H^0(\Omega^*(M, \mathcal{W}_{\operatorname{cl}} \otimes L^{\otimes 2k}), D_k^L) \cong \Gamma(M, L^{\otimes 2k}).
\end{equation*}
In other words, $D_k^L$-flat sections of $\mathcal{W}_{\operatorname{cl}} \otimes L^{\otimes 2k}$ are in bijection with smooth sections of $L^{\otimes 2k}$. Thus, $\circledast_k^+$ and $\check{\circledast}_k^-$ descend to operations $\mathcal{O}_{\operatorname{qu}}^{(k)} \times \mathcal{L}_{\operatorname{sm}}^{(2k)} \to \mathcal{L}_{\operatorname{sm}}^{(2k)}$ and $\overline{\mathcal{O}}_{\operatorname{qu}}^{(k)} \times \mathcal{L}_{\operatorname{sm}}^{(2k)} \to \mathcal{L}_{\operatorname{sm}}^{(2k)}$ respectively. These give an $\mathcal{O}_{\operatorname{qu}}^{(k)}$-$\overline{\mathcal{O}}_{\operatorname{qu}}^{(k)}$ bimodule structure on $\mathcal{L}_{\operatorname{sm}}^{(2k)}$.\par
Now, assume $M$ is spin and we choose a square root $\sqrt{K}$ of the canonical bundle of $M$. Then there is a natural pairing
\begin{equation}
	\label{Equation 5.18}
	\Omega^*(M, \mathcal{W}_{\operatorname{cl}}^{1, 0} \otimes L^{(k)}) \times \Omega^*(M, \mathcal{W}_{\operatorname{cl}}^{0, 1} \otimes \overline{L}^{(-k)}) \to \Omega^*(M, \mathcal{W}_{\operatorname{cl}} \otimes L^{\otimes 2k}), \quad (s, \check{s}) \mapsto s \check{s}.
\end{equation}
We can easily show that for all $s \in \Omega^*(M, \mathcal{W}_{\operatorname{cl}}^{1, 0} \otimes L^{(k)})$ and $\check{s} \in \Omega^*(M, \mathcal{W}_{\operatorname{cl}}^{0, 1} \otimes \overline{L}^{(-k)})$,
\begin{equation*}
	D_k^L(s \check{s}) = (D_{+, k}^L s) \check{s} + (-1)^{\lvert s \rvert} s (\check{D}_{-, k}^L \check{s}),
\end{equation*}
where $\check{D}_{-, k}^L$ is the $(\overline{M}, -\omega)$-counterpart of $D_{-, k}^L$. Hence, (\ref{Equation 5.18}) induces the morphism of sheaves in (\ref{Equation 5.14}). It remains to verify that, if $a \in \Omega^*(M, \underline{\mathcal{W}}_{\operatorname{cl}})$, $\check{a} \in \Omega^*(M, \underline{\check{\mathcal{W}}}_{\operatorname{cl}})$, $s \in \Omega^*(M, \mathcal{W}_{\operatorname{cl}}^{1, 0} \otimes L^{(k)})$ and $\check{s} \in \Omega^*(M, \mathcal{W}_{\operatorname{cl}}^{0, 1} \otimes \overline{L}^{(-k)})$, then $a \circledast_k^+ (s \check{s}) = (a \circledast_k^+ s) \check{s}$ and $a \check{\circledast}_k^- (s \check{s}) = (-1)^{\lvert a \rvert \lvert s \rvert} s (a \check{\circledast}_k^- \check{s})$. The verifications are straightforward.
\end{proof}

\appendix

\section{Complexification and holomorphic extensions}
\label{Appendix A}
We adopt the definition of a complexification of a real analytic manifold in \cite{LouSan2017}.

\begin{definition}
	A \emph{complexification} of a real analytic manifold $M$ is a complex manifold $X$ together with a closed real analytic embedding $\iota: M \hookrightarrow X$ such that $\iota(M)$ is a maximally totally real submanifold  of $X$.
\end{definition}

\begin{proposition}
	Let $M$ be a real analytic manifold. Then the following statements hold.
	\begin{enumerate}
		\item $M$ admits a complexification $\iota: M \hookrightarrow X$ and $X$ can be chosen to be a Stein manifold.
		\item If $\iota_1: M \hookrightarrow X_1$ and $\iota_2: M \hookrightarrow X_2$ are complexifications of $M$, then there exists a unique germ of biholomorphism $\phi: U_1 \to U_2$, where $U_i$ is a neighbourhood of $\iota_i(M)$ in $X_i$ for $i \in \{1, 2\}$ such that $\iota_2 = \phi \circ \iota_1$.
	\end{enumerate}
\end{proposition}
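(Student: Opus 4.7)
The plan is to establish existence and germ-uniqueness separately, drawing on classical results of Whitney-Bruhat and Grauert, with the identity principle for holomorphic functions driving the uniqueness step.

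For part (1), I would cover $M$ by real analytic coordinate charts $(U_\alpha, \phi_\alpha)$ with $\phi_\alpha : U_\alpha \to V_\alpha \subseteq \R^n$. Each transition $\phi_{\alpha\beta} = \phi_\alpha \circ \phi_\beta^{-1}$ is real analytic on $\phi_\beta(U_\alpha \cap U_\beta) \subset \R^n \subset \C^n$, hence extends uniquely to a biholomorphism $\widetilde{\phi}_{\alpha\beta}$ between some open neighborhoods in $\C^n$. By the identity theorem, the cocycle identity $\widetilde{\phi}_{\alpha\gamma} = \widetilde{\phi}_{\alpha\beta} \circ \widetilde{\phi}_{\beta\gamma}$, which holds on $\R^n$, persists on any connected neighborhood in $\C^n$ where all three compositions are defined. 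A careful shrinking argument (Whitney-Bruhat) yields a consistent choice of neighborhoods on which the $\widetilde{\phi}_{\alpha\beta}$ glue to form a complex manifold $X$, equipped with a natural closed real analytic embedding $\iota : M \hookrightarrow X$ as a maximally totally real submanifold. For the Stein refinement, I would invoke Grauert's real analytic embedding theorem: $M$ sits as a closed real analytic submanifold of some $\R^N \subset \C^N$, making $M$ a closed totally real submanifold of a Stein manifold, and then apply Grauert's theorem on the existence of Stein tubular neighborhoods of totally real real analytic submanifolds to extract a Stein complexification.

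For part (2), let $\iota_i : M \hookrightarrow X_i$ be two complexifications. Near any point $p \in M$, pick a common real analytic chart on $M$; the local construction of part (1) identifies neighborhoods of $\iota_1(p)$ in $X_1$ and of $\iota_2(p)$ in $X_2$ with the same complexification of that chart. The local biholomorphisms produced in this way are mutually compatible by the identity theorem applied on the maximally totally real locus $\iota_1(M)$: any two of them agree with $\iota_2 \circ \iota_1^{-1}$ on $\iota_1(M)$ and hence on connected neighborhoods thereof. This assembles a well-defined germ of biholomorphism $\phi : U_1 \to U_2$ with $\iota_2 = \phi \circ \iota_1$. Uniqueness of the germ is immediate from the identity principle, since maximal total reality ensures that any holomorphic map defined near $\iota_1(M)$ is determined by its restriction to $\iota_1(M)$.

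The main obstacle is the combinatorial refinement in the gluing step of existence: the holomorphic extensions $\widetilde{\phi}_{\alpha\beta}$ a priori live on uncontrolled neighborhoods, and matching them on triple overlaps requires the Whitney-Bruhat shrinking procedure to obtain a globally coherent complex manifold. The passage from an arbitrary complexification to a Stein one is, once the existence of some complexification is granted, a standard consequence of Grauert's work on Stein neighborhoods. Germ-uniqueness, by contrast, is essentially formal given the identity theorem for holomorphic functions on maximally totally real submanifolds.
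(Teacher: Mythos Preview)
The paper states this proposition without proof, treating it as a classical result (with the definition attributed to \cite{LouSan2017}); there is no argument in the paper to compare against. Your proposal correctly sketches the standard Whitney--Bruhat/Grauert proof: gluing holomorphic extensions of the real analytic transition maps for existence, invoking Grauert's Stein neighbourhood theorem for the Stein refinement, and using the identity principle on a maximally totally real submanifold for germ-uniqueness. This is exactly the classical route one would expect, and your identification of the Whitney--Bruhat shrinking step as the only nontrivial point is accurate.
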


Now we fix a complexification $\iota: M \hookrightarrow X$ of a real analytic manifold $M$. We call a holomorphic vector bundle $E^{\operatorname{c}}$ over $X$ a \emph{holomorphic extension} of a real analytic complex vector bundle $E$ over $M$ if $E$ and $E^{\operatorname{c}}$ are of the same rank and there exists an injective morphism of real analytic complex vector bundles $\iota_E: E \hookrightarrow E^{\operatorname{c}}$ covering $\iota: M \hookrightarrow X$ (which implies $E \cong \iota^*E^{\operatorname{c}}$). It can be shown that every real analytic complex vector bundle $E$ over $M$ has a holomorphic extension on a neighbourhood of $\iota(M)$ in $X$ and the germ of holomorphic extensions of $E$ is unique.

\begin{example}
	As $\iota(M)$ is maximally totally real in $X$, we have a vector bundle isomorphism
	\begin{equation*}
		\iota^*: T^{\vee (1, 0)}X \vert_M \to T^\vee M_\mathbb{C}.
	\end{equation*}
	Its inverse $T^\vee M_\mathbb{C} \to T^{\vee (1, 0)}X \vert_M$ and its dual map $TM_\mathbb{C} \to T^{1, 0}X \vert_M$, which is the restriction of the canonical projection $\pi^{1, 0}: TX_\mathbb{C} \to T^{1, 0}X$, realize $T^{\vee (1, 0)}X$ and $T^{(1, 0)}X$ as holomorphic extensions of $T^\vee M_\mathbb{C}$ and $TM_\mathbb{C}$ respectively.
\end{example}

For a smooth section $\sigma$ of a holomorphic extension $E^{\operatorname{c}}$ of $E$, we denote by $\iota^*\sigma$ the unique smooth section of $E$ such that for all $p \in M$, $\sigma(\iota(p)) = \iota_E( (\iota^*\sigma)(p) )$. If $\sigma$ is holomorphic, then $\iota^*\sigma$ is real analytic and $\sigma$ is called a \emph{holomorphic extension} of $\iota^*\sigma$. Similarly, a holomorphic connection $\nabla^{\operatorname{c}}$ on $E^{\operatorname{c}}$ is called a \emph{holomorphic extension} of a real analytic connection $\nabla$ on $E$ if for any open subset $U$ of $X$ with $U \cap M \neq \emptyset$ and local holomorphic section $\sigma$ of $E^{\operatorname{c}}$ over $U$, $\nabla^{\operatorname{c}} \sigma$ is a holomorphic extension of $\nabla \iota^*\sigma$. Again, a real analytic section of (resp. connection on) $E$ always admits a holomorphic extension on a neighbourhood of $\iota(M)$ in $X$ and the germ of its holomorphic extension is unique. All of the above existence and uniqueness results of holomorphic extensions can be proved by analytic continuation of real analytic functions. A final remark is that,
\begin{enumerate}
	\item if $\xi_0, \xi_1$ are local holomorphic vector fields on $X$, then $\iota^*[\xi_0, \xi_1] = [ \iota^*\xi_0, \iota^*\xi_1 ]$;
	\item if $\alpha \in \Omega^{*, 0}(X)$ is holomorphic, then $\iota^*(\partial \alpha) = d\iota^*\alpha$; and
	\item the curvature of $\nabla^{\operatorname{c}}$ is a holomorphic extension of that of $\nabla$.
\end{enumerate}

\section{Holomorphic deformation quantization}
\label{Appendix B}
Here we will give a brief review on holomorphic deformation quantizations of a holomorphic symplectic manifold $(X, \Omega)$, mainly following the framework of Nest-Tsygan \cite{NesTsy2001}. A few notions are borrowed from Bezrukavnikov-Kaledin \cite{BezKal2004} and Yekutieli \cite{Yek2008} in the algebraic case.\par
Let $U$ be an open subset of $X$ and $\mathcal{O}_U$ be the sheaf of holomorphic functions on $U$. A \emph{holomorphic star product} on $\mathcal{O}_U[[\hbar]]$ is a $\mathbb{C}[[\hbar]]$-bilinear sheaf morphism $\star: \mathcal{O}_U[[\hbar]] \times \mathcal{O}_U[[\hbar]] \to \mathcal{O}_U[[\hbar]]$ such that $(\mathcal{O}_U[[\hbar]], \star)$ forms a sheaf of $\mathbb{C}[[\hbar]]$-algebras with unit $1$ and there is a sequence $\{ C_r \}_{r=1}^\infty$ of holomorphic bi-differential operators on $U$ such that for all open subset $V$ of $U$ and $f, g \in \mathcal{O}_U(V)$,
\begin{equation*}
	f \star g = fg + \sum_{r=1}^\infty \hbar^r C_r(f, g).
\end{equation*}
A \emph{gauge equivalence} of $\mathcal{O}_U[[\hbar]]$ is a $\mathbb{C}[[\hbar]]$-linear automorphism of sheaves $\gamma: \mathcal{O}_U[[\hbar]] \to \mathcal{O}_U[[\hbar]]$ such that $\gamma(1) = 1$ and there is a sequence $\{ D_r \}_{r=1}^\infty$ of holomorphic differential operators on $U$ such that for all open subset $V$ of $U$ and $f \in \mathcal{O}_U(V)$,
\begin{align*}
	\gamma(f) = f + \sum_{r=1}^\infty \hbar^r D_r(f).
\end{align*}
Now suppose that $\mathcal{A}$ is a sheaf of $\hbar$-adically complete flat $\mathbb{C}[[\hbar]]$-algebras on $X$ with multiplication $\star$ and $\psi: \mathcal{A}/\hbar \mathcal{A} \to \mathcal{O}_X$ is an isomorphism of sheaves of $\mathbb{C}$-algebras. A \emph{differential trivialization} of $(\mathcal{A}, \psi)$ on $U$ is an isomorphism of sheaves of $\mathbb{C}[[\hbar]]$-modules
\begin{align*}
	\tau: \mathcal{O}_U[[\hbar]] \to \mathcal{A} \vert_U
\end{align*}
such that the multiplication $\star_\tau$ on $\mathcal{O}_U[[\hbar]]$ identified with $\star$ via the morphism $\tau$ is a holomorphic star product on $\mathcal{O}_U[[\hbar]]$, and for all open subset $V$ of $U$ and $f \in \mathcal{O}_U(V)$, $(\psi \circ \tau)(f) = f$. Finally, a \emph{differential structure} of $(\mathcal{A}, \psi)$ is an open cover $\{ U_i \}_{i \in I}$ of $X$ with a differential trivialization $\tau_i$ of $(\mathcal{A}, \psi)$ on $U_i$ for each $i \in I$ such that for all $i, j \in I$ with $U_{ij} = U_i \cap U_j$, the transition automorphism $\tau_j^{-1} \circ \tau_i: \mathcal{O}_{U_{ij}}[[\hbar]] \to \mathcal{O}_{U_{ij}}[[\hbar]]$ is a gauge equivalence.

\begin{definition}[Definition 5.1 in \cite{NesTsy2001}]
	The pair $(\mathcal{A}, \psi)$ is called a \emph{holomorphic deformation quantization} of $(X, \Omega)$ if it admits a differential structure and
	\begin{equation*}
		\{f, g\} = \psi\left( \frac{1}{\hbar} [\tilde{f}, \tilde{g}] \right),
	\end{equation*}
	for all open subset $U$ of $X$ and $f, g \in \mathcal{O}_X(U)$, where $\{\quad, \quad\}$ is the holomorphic Poisson bracket induced by $\Omega$, $\tilde{f}, \tilde{g} \in \mathcal{A}(U)$ are lifts of $f, g$ respectively and $[\tilde{f}, \tilde{g}] := \tilde{f} \tilde{g} - \tilde{g} \tilde{f}$.
\end{definition}

Now we introduce a crucial geometric construction of holomorphic deformation quantizations of $(X, \Omega)$. Define $\mathcal{W} = \widehat{\operatorname{Sym}} T^{\vee (1, 0)}X[[\hbar]]$, where $T^{\vee (1, 0)}X$ is the holomorphic cotangent bundle of the complex manifold $X$. In local complex coordinates $(z^1, ..., z^{2n})$ on $X$, a smooth (resp. holomorphic) section of $\mathcal{W}$ is given by a formal power series $\sum_{r, l \geq 0} \sum_{i_1, ..., i_l \geq 0} \hbar^r a_{r, i_1, ..., i_l} w^{i_1} \cdots w^{i_l}$, where $a_{r, i_1, ..., i_l}$ are local smooth (resp. holomorphic) functions and $w^i$ denotes the covector $dz^i$ regarded as a section of $\mathcal{W}$. Similar to the real symplectic case, we assign weights on $\mathcal{W}$ by setting the weight of $w^i$ to be $1$ and the weight of $\hbar$ to be $2$. For $k \in \mathbb{N}$, we denote by $\mathcal{W}_{(k)}$ the subbundle of $\mathcal{W}$ of weight at least $k$. We then have a decreasing filtration
\begin{align*}
	\mathcal{W} = \mathcal{W}_{(0)} \supset \mathcal{W}_{(1)} \supset \cdots \supset  \mathcal{W}_{(k)} \supset \cdots.
\end{align*}
There are $\mathcal{C}^\infty(X, \mathbb{C})[[\hbar]]$-linear operators $\delta, \delta^{-1}, \pi_0$ on $\Omega^*(X, \mathcal{W})$ defined as follows: for a local section $a = w^{i_1} \cdots w^{i_l} dz^{j_1} \wedge \cdots \wedge dz^{j_p} \wedge d\overline{z}^{j'_1} \wedge \cdots \wedge d\overline{z}^{j'_q}$,
\begin{align*}
	\delta a = dz^k \wedge \frac{\partial a}{\partial w^k},\quad
	\delta^{-1} a =
	\begin{cases}
		\frac{1}{l+p} w^k \iota_{\partial_{z^k}} a & \text{ if } l + p > 0;\\
		0 & \text{ if } l + p = 0,
	\end{cases}
	\quad \pi_0(a) =
	\begin{cases}
		0 & \text{ if } l + p > 0;\\
		a & \text{ if } l + p = 0,
	\end{cases}
\end{align*}
and the equality $\operatorname{Id} - \pi_0 = \delta \circ \delta^{-1} + \delta^{-1} \circ \delta$ holds on $\Omega^*(X, \mathcal{W})$.\par
Write $\Omega = \tfrac{1}{2} \Omega_{ij} dz^i \wedge dz^j$ and let $(\Omega^{ij})$ be the inverse of $(\Omega_{ij})$. There is a fibrewise holomorphic star product $\star_{\operatorname{M}}$ on $\Omega^*(X, \mathcal{W})$ known as the \emph{fibrewise holomorphic Moyal product} and defined by
\begin{equation*}
	\alpha \star_{\operatorname{M}} \beta = \sum_{r=0}^\infty \frac{1}{r!} \left( \frac{\hbar}{2} \right)^r \Omega^{i_1j_1} \cdots \Omega^{i_rj_r} \frac{\partial^r \alpha}{\partial w^{i_1} \cdots \partial w^{i_r}} \wedge \frac{\partial^r \beta}{\partial w^{j_1} \cdots \partial w^{j_r}},
\end{equation*}
preserving the weight filtration of $\mathcal{W}$, i.e. $\mathcal{W}_{(k_1)} \star_{\operatorname{M}} \mathcal{W}_{(k_2)} \subset \mathcal{W}_{(k_1 + k_2)}$.\par
The bundle $T^{1, 0}X$ equipped with $\Omega$ is a complex Lie algebroid. We pick a $T^{1, 0}X$-connection $\nabla$ on $T^{1, 0}X$ such that $\nabla \Omega = 0$ (indeed we can choose $\nabla$ so that it is torsion-free). There exists a unique element $R \in \Omega^2(X, \operatorname{Sym}^2 T^{\vee (1, 0)}X)$ such that $(\nabla + \overline{\partial})^2 = \tfrac{1}{\hbar}[R, \quad]_{\star_{\operatorname{M}}}$. Then
\begin{equation*}
	(\Omega^*(X, \mathcal{W}), R, \nabla + \overline{\partial}, \tfrac{1}{\hbar} [\quad, \quad]_{\star_{\operatorname{M}}})
\end{equation*}
forms a curved dgla. We can deform this structure by an element $\gamma \in \Omega^1(X, \mathcal{W})$ to form a new curved dgla $(\Omega^*(X, \mathcal{W}), R_\gamma, D_\gamma, \tfrac{1}{\hbar} [\quad, \quad]_{\star_{\operatorname{M}}})$, where
\begin{align*}
	D_\gamma :=  \nabla + \overline{\partial} + \tfrac{1}{\hbar} [\gamma, \quad]_{\star_{\operatorname{M}}} \quad \text{and} \quad R_\gamma := R + (\nabla + \overline{\partial}) \gamma + \tfrac{1}{2\hbar} [\gamma, \gamma]_{\star_{\operatorname{M}}}.
\end{align*}

\begin{definition}
	\label{Definition B.2}
	A complex manifold $X$ is said to be \emph{admissible} if for $i \in \{1, 2\}$, the canonical map $H^i(X, \mathbb{C}) \to H^i(X, \mathcal{O}_X)$ is surjective. 
\end{definition}

Admissibility of $X$ implies that the following is a short exact sequence of $\mathbb{C}$-vector spaces:
\begin{center}
	\begin{tikzcd}
		0 \ar[r] & \mathbb{H}^2(X, F^1\Omega_X^*) \ar[r] & H^2(X, \mathbb{C}) \ar[r] & H^2(X, \mathcal{O}_X) \ar[r] & 0
	\end{tikzcd}
\end{center}
where $F^1\Omega_X^*$ is the Hodge filtration of the sheaf $(\Omega_X^*, \partial)$ of holomorphic de-Rham complex on $X$. We now use deRham or Dolbeault models to describe the above sheaf cohomologies:
\begin{align*}
	\mathbb{H}^2(X, F^1\Omega_X^*) \cong H_d^2(\Omega^{*\geq 1, *}(X)), \quad H^2(X, \mathbb{C}) \cong H_d^2(\Omega^*(X, \mathbb{C})), \quad H^2(X, \mathcal{O}_X) \cong H_{\overline{\partial}}^2(\Omega^{0, 2}(X)).
\end{align*}
Assuming $X$ is admissible, we are able to construct a (not necessarily $\mathbb{C}$-linear) map
\begin{align*}
	\Omega_{\overline{\partial}\text{-closed}}^{2, 0}(X) \to \Omega_{d\text{-closed}}^2(X, \mathbb{C}), \quad \tau \mapsto \Lambda^\tau + \tau,
\end{align*}
where $\Lambda^\tau \in \Omega^{2, 0}(X) \oplus \Omega^{1, 1}(X)$, which descends to a splitting $H^2(X, \mathcal{O}_X) \to H^2(X, \mathbb{C})$ of the above short exact sequence. Let us fix a choice of such a map $\tau \mapsto \Lambda^\tau + \tau$. Also, let $\tilde{\Omega} \in \Omega^{1, 0}(X, T^{\vee (1, 0)}X)$ be the unique element such that $\delta = \tfrac{1}{\hbar} [\tilde{\Omega}, \quad]_{\star_{\operatorname{M}}}$.

\begin{theorem}[Theorem 5.9 in \cite{NesTsy2001}]
	\label{Theorem B.3}
	Suppose that $(X, \Omega)$ is an admissible holomorphic symplectic manifold and that $\Lambda \in \hbar (\Omega^{2, 0}(X) \oplus \Omega^{1, 1}(X))[[\hbar]]$ is $d$-closed. Then there exist $A \in \Omega^{1, 0}(X, \mathcal{W}_{(2)})$, $B \in \Omega^{0, 1}(X, \mathcal{W}_{(3)})$, and $\overline{\partial}$-closed $\tau \in \hbar^2 \Omega^{0, 2}(X)[[\hbar]]$, whose cohomology class $[\tau] \in \hbar H^2(X, \mathcal{O}_X)[[\hbar]]$ is uniquely determined by $[\Lambda] \in \hbar \mathbb{H}^2(X, F^1\Omega_X^*)[[\hbar]]$, such that
	\begin{equation}
		R_\gamma = -\Omega + \Lambda + \Lambda^\tau + \tau,
	\end{equation}
	where $\gamma = -\tilde{\Omega} + A + B$. In particular, $D_\gamma$ is a flat connection.
\end{theorem}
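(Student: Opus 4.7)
The strategy is a bidegree-refined Fedosov iteration, solving for $A$, $B$, and $\tau$ in order by expanding $R_\gamma$ by Dolbeault bidegree. With $\gamma = -\tilde{\Omega} + A + B$, the identities $\tfrac{1}{2\hbar}[\tilde{\Omega}, \tilde{\Omega}]_{\star_{\operatorname{M}}} = \Omega$ and $\nabla\tilde{\Omega} = 0$ (which follow from $\nabla$ being torsion-free with $\nabla\Omega = 0$) yield the bidegree components
\begin{align*}
R_\gamma^{2,0} + \Omega &= R + (\nabla - \delta)A + \tfrac{1}{2\hbar}[A,A]_{\star_{\operatorname{M}}},\\
R_\gamma^{1,1} &= \overline{\partial} A + (\nabla - \delta)B + \tfrac{1}{\hbar}[A,B]_{\star_{\operatorname{M}}},\\
R_\gamma^{0,2} &= \overline{\partial} B + \tfrac{1}{2\hbar}[B,B]_{\star_{\operatorname{M}}}.
\end{align*}
Equating these to the bidegree pieces of $-\Omega + \Lambda + \Lambda^\tau + \tau$ gives three coupled equations.

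The $(2,0)$-equation is essentially equation (\ref{Equation 3.3}) with source $\Lambda^{2,0} + \Lambda^{\tau,2,0}$ in place of $\Omega_\hbar$. Applying $\delta^{-1}$ and imposing the gauge $\delta^{-1}A = 0$, one solves for $A$ iteratively on the weight filtration $\mathcal{W}_{(2)} \supset \mathcal{W}_{(3)} \supset \cdots$, obtaining a unique $A \in \Omega^{1,0}(X, \mathcal{W}_{(2)})$, exactly as in the proofs of Theorems \ref{Theorem 3.5} and \ref{Theorem 3.8}. Feeding this $A$ into the $(1,1)$-equation, the same procedure, with the gauge $\delta^{-1}B = 0$, produces a unique $B \in \Omega^{0,1}(X, \mathcal{W}_{(3)})$; the $\hbar$-factor in $\Lambda$ is what ensures the weight $\geq 3$ bound on $B$.

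The $(0,2)$-equation then \emph{defines} $\tau := \overline{\partial} B + \tfrac{1}{2\hbar}[B,B]_{\star_{\operatorname{M}}}$, and the nontrivial content is to show that $\tau$ is scalar (lies in $\mathcal{W}_{(0)}/\mathcal{W}_{(1)} = \mathbb{C}[[\hbar]]$) and $\overline{\partial}$-closed. This follows from the Bianchi identity $(\nabla + \overline{\partial}) R_\gamma + \tfrac{1}{\hbar}[\gamma, R_\gamma]_{\star_{\operatorname{M}}} = 0$: since the $(2,0)$- and $(1,1)$-parts of $R_\gamma$ are already matched with scalar forms by construction, only the $(0,2)$-part of Bianchi remains, which reduces modulo $\mathcal{W}_{(1)}$ to $\overline{\partial} \tau = 0$. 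Weight bookkeeping, combined with $B \in \mathcal{W}_{(3)}$, forces $\tau \in \hbar^2 \Omega^{0,2}(X)[[\hbar]]$. Flatness of $D_\gamma$ is then automatic, because its curvature $\tfrac{1}{\hbar}\operatorname{ad}(R_\gamma)$ on $\mathcal{W}$-valued forms vanishes once $R_\gamma$ is scalar-valued and hence central.

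The main obstacle is proving that $[\tau] \in \hbar H^2(X, \mathcal{O}_X)[[\hbar]]$ depends only on $[\Lambda]$, not on the choice of representative $\Lambda$ or the intermediate gauge choices $\delta^{-1}A = \delta^{-1}B = 0$. I would compare the iterative outputs for two representatives $\Lambda$ and $\Lambda + d\mu$ with $\mu \in \hbar \Omega^{1,0}(X)[[\hbar]]$, and show that the resulting difference in $(A,B,\tau)$ is absorbed by an inner gauge transformation of $D_\gamma$ of the form $e^{\frac{1}{\hbar}\operatorname{ad}(h)}$ for some $h \in \Omega^0(X, \mathcal{W})$. Admissibility of $X$ enters precisely here: it is what guarantees that the splitting $\tau \mapsto \Lambda^\tau + \tau$ is compatible at the cohomological level with such gauge transformations, so that the shift in $\tau$ is $\overline{\partial}$-exact and the class $[\tau]$ becomes a well-defined function of $[\Lambda]$.
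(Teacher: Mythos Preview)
The paper does not prove this statement; it is quoted from Nest--Tsygan \cite{NesTsy2001} (their Theorem 5.9) and used as a black box in Appendix~\ref{Appendix B}. There is therefore no in-paper argument to compare your outline against.

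Evaluating your sketch on its own merits: the sequential scheme ``solve the $(2,0)$-equation for $A$, then the $(1,1)$-equation for $B$, then read off $\tau$ from the $(0,2)$-equation'' is circular as stated. You take the $(2,0)$-source to be $\Lambda^{2,0} + \Lambda^{\tau,2,0}$, but $\Lambda^\tau$ is determined by $\tau$ via the fixed splitting, and in your scheme $\tau$ only appears at the very end. The way this is actually resolved is to iterate \emph{simultaneously} in the $\hbar$-adic (equivalently, weight) filtration: since $\tau \in \hbar^2\Omega^{0,2}(X)[[\hbar]]$, the contribution of $\Lambda^\tau$ to the $(2,0)$- and $(1,1)$-equations at order $\hbar^r$ is determined by $\tau$ at orders $< r$, which in turn depends only on $B$ at orders $< r$. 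So $A$, $B$, $\tau$ must be built together, one $\hbar$-order at a time, not in three separate passes.

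Your argument that $R_\gamma^{0,2}$ is scalar is also incomplete. The $(1,2)$-component of the Bianchi identity, using that $R_\gamma^{2,0}$ and $R_\gamma^{1,1}$ are already scalar, yields only that $(\nabla - \delta + \tfrac{1}{\hbar}[A,\cdot])$ applied to the positive-weight part of $R_\gamma^{0,2}$ lands in scalar $(1,2)$-forms. To conclude that this positive-weight part vanishes you must also invoke the automatic gauge condition $\delta^{-1}R_\gamma^{0,2} = 0$ (which holds because $\iota_{\partial_{z^k}}$ kills $(0,2)$-forms) together with the acyclicity of the Fedosov differential on positive-weight sections. None of this is hard, but it is not a one-line consequence of Bianchi as you suggest.
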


In this case, the sheaf of $D_\gamma$-flat sections of $\mathcal{W}$ is a holomorphic deformation quantization of $(X, \Omega)$. Indeed, every holomorphic deformation quantization of $(X, \Omega)$ is isomorphic to the one constructed in the above way \cite{NesTsy2001}. In consequence, we have the following classification theorem.

\begin{theorem}[Theorem 5.11 in \cite{NesTsy2001}]
	\label{Theorem B.4}
	Suppose $(X, \Omega)$ is an admissible holomorphic symplectic manifold. Then equivalence classes of holomorphic deformation quantizations of $(X, \Omega)$ are in bijection with elements in $-\tfrac{1}{\hbar}[\Omega] + \mathbb{H}^2(X, F^1\Omega_X^*)[[\hbar]]$.
\end{theorem}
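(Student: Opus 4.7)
The plan is to extend Chan-Leung-Li's Fedosov construction of the left $\mathcal{O}_{\operatorname{qu}}^{(k)}$-module structure on $\mathcal{L}^{(k)}$ to a bimodule by symmetrizing between $(M, \omega)$ and its conjugate $(\overline{M}, -\omega)$ on the full smooth Weyl bundle $\mathcal{W}_{\operatorname{cl}} = \widehat{\operatorname{Sym}} T^\vee M_\mathbb{C}$ twisted by $L^{\otimes 2k}$. Concretely, I would introduce a fibrewise left operation $\circledast^+$ built from an anti-Wick-type product using the K\"ahler splitting $(1,0)/(0,1)$ on $M$, together with its mirror $\check{\circledast}^-$ coming from $(\overline{M}, -\omega)$. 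The crucial algebraic fact will be that these two operations differentiate against disjoint sets of fibre variables ($w^\mu$ versus $\overline{w}^\nu$) and therefore graded-commute pointwise on the fibrewise Weyl bundle, which is the essential mechanism producing a genuine bimodule rather than two competing one-sided actions.

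The technical heart is a flat connection of the form
\[
D_k^L = \Bigl( \nabla + \tfrac{k}{\sqrt{-1}} \gamma_k \circledast_k^+ - \tfrac{k}{\sqrt{-1}} \check{\gamma}_k \check{\circledast}_k^- \Bigr) \otimes \operatorname{Id} + \operatorname{Id} \otimes \nabla^{L^{\otimes 2k}}
\]
on $\mathcal{W}_{\operatorname{cl}} \otimes L^{\otimes 2k}$, where $\gamma_k, \check{\gamma}_k$ are the two Fedosov $1$-forms. Flatness should follow from two clean cross-cancellations: first, the symplectic contributions from the two sides combine to $-\tfrac{2k}{\sqrt{-1}}\omega$, matching minus the curvature of $\nabla^{L^{\otimes 2k}}$; second, the metaplectic Ricci-form corrections $\tfrac{\hbar}{\sqrt{-1}}\omega_1$ cancel because the $(\overline{M}, -\omega)$-counterpart of $\omega_1$ is $-\omega_1$. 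Having established flatness and the Leibniz rule for each of $\circledast_k^+, \check{\circledast}_k^-$ against $D_k^L$, a standard Fedosov $\delta$-inversion iteration produces a cochain isomorphism from $(\Omega^*(M, \mathcal{W}_{\operatorname{cl}} \otimes L^{\otimes 2k}), D_k^L)$ to the trivial model $(\Omega^*(M, \mathcal{W}_{\operatorname{cl}} \otimes L^{\otimes 2k}), -\delta)$; the weak Hodge decomposition for $\delta$ kills the positive degrees and identifies degree zero with $\Gamma(M, L^{\otimes 2k})$, so $D_k^L$-flat sections biject with smooth sections of $L^{\otimes 2k}$ and both actions descend.

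For the pairing compatibility, I would exploit the algebra factorization $\mathcal{W}_{\operatorname{cl}} \cong \mathcal{W}_{\operatorname{cl}}^{1, 0} \otimes \mathcal{W}_{\operatorname{cl}}^{0, 1}$, together with the fibrewise multiplication
\[
\Omega^*(M, \mathcal{W}_{\operatorname{cl}}^{1, 0} \otimes L^{(k)}) \times \Omega^*(M, \mathcal{W}_{\operatorname{cl}}^{0, 1} \otimes \overline{L}^{(-k)}) \to \Omega^*(M, \mathcal{W}_{\operatorname{cl}} \otimes L^{\otimes 2k}).
\]
I would verify a Leibniz identity of the form $D_k^L(s \check{s}) = (D_{+, k}^L s) \check{s} + (-1)^{|s|} s (\check{D}_{-, k}^L \check{s})$, relating $D_k^L$ to the two one-sided Fedosov connections $D_{+, k}^L$ and $\check{D}_{-, k}^L$ governing $\mathcal{L}^{(k)}$ and $\overline{\mathcal{L}}^{(-k)}$. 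Such an identity immediately sends tensor products of flat sections to flat sections, inducing the natural pairing, and bimodule compatibility reduces once more to the graded-commutativity of $\circledast^+$ with $\check{\circledast}^-$ together with the observation that $\circledast^+$ acts trivially on the $(0,1)$-factor while $\check{\circledast}^-$ acts trivially on the $(1,0)$-factor.

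The main obstacle I anticipate is the bookkeeping of curvatures, signs, and Ricci-form cancellations. Both sides carry anti-Wick-type ordering conventions with opposite polarizations, so the precise interaction between $\omega_1$, the metaplectic corrections, and the operators $\delta, \delta^{1,0}, \delta^{0,1}$ relative to $\circledast^{\pm}$ must be reconciled fibrewise before the global cohomological reduction can be trusted. I expect the flatness computation for $D_k^L$ and the verification of the three Leibniz rules (for $D_k^L$ against $\circledast_k^+$, $\check{\circledast}_k^-$, and against the pairing) to be the technically densest step, after which both the bimodule structure and the pairing statement fall out in parallel from the same abstract nonsense.
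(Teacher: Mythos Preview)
Your proposal addresses the wrong statement. Theorem~\ref{Theorem B.4} is the Nest--Tsygan classification result: it asserts a bijection between equivalence classes of holomorphic deformation quantizations of an admissible $(X,\Omega)$ and elements of $-\tfrac{1}{\hbar}[\Omega] + \mathbb{H}^2(X, F^1\Omega_X^*)[[\hbar]]$. The paper does not prove this theorem at all; it is quoted from \cite{NesTsy2001} as background in Appendix~\ref{Appendix B}. What you have sketched instead is a proof of Theorem~\ref{Theorem 5.18} (equivalently Theorem~\ref{Theorem 1.1}), the construction of the $\mathcal{O}_{\operatorname{qu}}^{(k)}$-$\overline{\mathcal{O}}_{\operatorname{qu}}^{(k)}$ bimodule structure on $\mathcal{L}_{\operatorname{sm}}^{(2k)}$ together with the pairing compatibility. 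Nothing in your outline --- the connection $D_k^L$, the operations $\circledast^\pm$, the Ricci-form cancellations, the $\delta$-inversion argument --- bears on classifying deformation quantizations by characteristic classes.

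For what it is worth, as a sketch of Theorem~\ref{Theorem 5.18} your outline is essentially the paper's own argument in Subsection~\ref{Subsection 5.3}: the same flat connection $D_k^L$, the same flatness computation via cross-cancellation of $\omega_1$ contributions, the same cochain isomorphism $\Psi_k$ to the $-\delta$ model, and the same Leibniz identity for the pairing. But that is not the theorem you were asked to prove. A proof of Theorem~\ref{Theorem B.4} would require the Fedosov-type machinery for the holomorphic Weyl bundle from Appendix~\ref{Appendix B} (Theorem~\ref{Theorem B.3}), an argument that every holomorphic deformation quantization arises from a flat connection $D_\gamma$ of the stated form, and an analysis showing that the class $\tfrac{1}{\hbar}[-\Omega + \Lambda]$ is a complete invariant up to equivalence --- none of which appears in your proposal.
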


\section{Existence of real analytic or holomorphic symplectic connections}

\begin{proposition}
	\label{Proposition C.1}
	Every symplectic manifold $(M, \omega)$ admits a real analytic symplectic connection.
\end{proposition}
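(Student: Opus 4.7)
The plan is to imitate the standard smooth existence proof of a symplectic connection, starting from a real analytic torsion-free linear connection on $TM$ and modifying it by a real analytic symmetric tensor. Real analyticity will be preserved throughout because the modification is purely tensorial in $\omega$, $\omega^{-1}$, and $\nabla_0 \omega$, each of which is real analytic under the canonical real analytic structure on $(M, \omega)$ provided by Kutzschebauch--Loose's theorem.

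First, I would produce a real analytic torsion-free connection $\nabla_0$ on $TM$. By Grauert's embedding theorem for real analytic manifolds, $M$ admits a proper real analytic embedding into some Euclidean space $\mathbb{R}^N$. Pulling back the flat metric yields a real analytic Riemannian metric on $M$, and its Levi-Civita connection is a real analytic torsion-free linear connection $\nabla_0$ (the Christoffel symbols are rational functions of the metric components and their first derivatives).

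Second, I would consider the real analytic tensor $T(X, Y, Z) := (\nabla_0{}_X \omega)(Y, Z)$, antisymmetric in $(Y, Z)$. Since $\nabla_0$ is torsion-free and $d\omega = 0$, a standard computation gives the cyclic identity $T(X, Y, Z) + T(Y, Z, X) + T(Z, X, Y) = 0$. I would then look for a real analytic $S \in \Gamma\bigl( \operatorname{Sym}^2 T^\vee M \otimes TM \bigr)$ satisfying
\begin{equation*}
	T(X, Y, Z) = \omega(S(X, Y), Z) + \omega(Y, S(X, Z))
\end{equation*}
for all local vector fields $X, Y, Z$. Using the cyclic identity together with the antisymmetry of $\omega$, a direct check shows that $S$ determined by
\begin{equation*}
	\omega(S(X, Y), Z) = \tfrac{1}{3} \bigl( T(X, Y, Z) + T(Y, X, Z) \bigr)
\end{equation*}
is a solution. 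Since $\omega$ is real analytic and non-degenerate, $\omega^{-1}$ is real analytic, hence $S$ is real analytic.

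Finally, setting $\nabla := \nabla_0 + S$ gives a real analytic linear connection on $TM$. Symmetry of $S$ in its two covariant slots forces $\nabla$ to be torsion-free, while the defining equation for $S$ forces $\nabla \omega = 0$. Thus $\nabla$ is a real analytic symplectic connection. The only external input is the existence of a real analytic Riemannian metric via Grauert's embedding theorem, which I expect to be the main point of reference; the subsequent construction is routine tensor algebra that preserves real analyticity manifestly, so no substantial obstacle should arise there.
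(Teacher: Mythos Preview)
Your proof is correct and essentially the same as the paper's. Both start from the Levi-Civita connection of a real analytic metric and apply the standard symmetrization $S(X,Y)=\tfrac{1}{3}(N(X,Y)+N(Y,X))$ (your $S$ and the paper's correction term coincide once one unwinds the definitions); you are simply more explicit in invoking Grauert's embedding to justify the existence of the real analytic metric, which the paper leaves as a parenthetical remark.
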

\begin{proof}
	Pick a torsion-free real analytic connection $\nabla^{(0)}$ on $M$ (e.g. the Levi-Civita connection of a real analytic metric on $M$). Then a usual proof of the existence of a symplectic connection states that the connection $\nabla$ on $M$ defined by
	\begin{align*}
		\nabla_XY = \nabla_X^{(0)}Y + \tfrac{1}{3} (N(X, Y) + N(Y, X))
	\end{align*}
	is a symplectic connection, where $N$ is a smooth section of $T^\vee M \otimes T^\vee M \otimes TM$ defined by $(\nabla_X^{(0)}\omega)(Y, Z) = \omega(N(X, Y), Z)$. As $\nabla^{(0)}$ and $\omega$ are real analytic, so are $N$ and $\nabla$.
\end{proof}

\begin{proposition}
	\label{Proposition C.2}
	Every Stein holomorphic symplectic manifold $(X, \Omega)$ admits a holomorphic symplectic connection.
\end{proposition}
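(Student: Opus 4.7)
The plan is to mimic the proof of Proposition \ref{Proposition C.1} in the holomorphic category, replacing the torsion-free real analytic connection (produced there from a real analytic metric) with a torsion-free holomorphic connection on $T^{1,0}X$. The main new ingredient is the existence of such a holomorphic connection, which fails on a general complex manifold but follows on Stein $X$ from vanishing of the Atiyah class.

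First, I would construct a holomorphic connection $\nabla^{(0)}$ on $T^{1,0}X$. The obstruction to its existence is the Atiyah class $\alpha(T^{1,0}X) \in H^1(X, \Omega_X^1 \otimes \operatorname{End}(T^{1,0}X))$. Since $\Omega_X^1 \otimes \operatorname{End}(T^{1,0}X)$ is a coherent (in fact locally free) analytic sheaf and $X$ is Stein, Cartan's Theorem B gives $H^1 = 0$, so $\nabla^{(0)}$ exists. Its torsion $T^{(0)}(\xi, \eta) := \nabla^{(0)}_\xi \eta - \nabla^{(0)}_\eta \xi - [\xi, \eta]$ is a holomorphic section of $\textstyle\bigwedge^2 T^{\vee(1,0)}X \otimes T^{1,0}X$, so the modified connection
\[
\tilde\nabla_\xi \eta := \nabla^{(0)}_\xi \eta - \tfrac{1}{2} T^{(0)}(\xi, \eta)
\]
is a torsion-free holomorphic connection on $T^{1,0}X$.

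Next, I would repeat the symmetrization step from Proposition \ref{Proposition C.1} in the holomorphic category. Since $\Omega$ is non-degenerate and holomorphic, and $\tilde\nabla \Omega$ is a holomorphic tensor, there is a unique holomorphic section $N$ of $T^{\vee(1,0)}X \otimes T^{\vee(1,0)}X \otimes T^{1,0}X$ satisfying $(\tilde\nabla_\xi \Omega)(\eta, \zeta) = \Omega(N(\xi, \eta), \zeta)$. Setting
\[
\nabla_\xi \eta := \tilde\nabla_\xi \eta + \tfrac{1}{3}\bigl(N(\xi, \eta) + N(\eta, \xi)\bigr),
\]
the same algebraic calculation as in Proposition \ref{Proposition C.1} --- which uses only that $\tilde\nabla$ is torsion-free, that $\Omega$ is skew, and that $d\Omega = 0$ (so that the cyclic sum of $\tilde\nabla \Omega$ vanishes) --- shows that $\nabla$ is torsion-free and satisfies $\nabla \Omega = 0$. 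Holomorphicity is automatic since every ingredient is holomorphic.

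The only real obstacle is the first step: on a general complex manifold $T^{1,0}X$ can fail to admit any holomorphic connection, so Steinness enters essentially (and only) there, via Cartan's Theorem B, to annihilate the Atiyah class. The subsequent torsion-free modification and $\Omega$-preserving modification are algebraic manipulations identical to the smooth or real analytic case, and they carry over verbatim to the holomorphic category once a torsion-free holomorphic starting point has been secured.
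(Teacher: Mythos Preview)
Your argument is correct, but it takes a genuinely different route from the paper. The paper starts instead with a smooth torsion-free $T^{1,0}X$-connection $\nabla$ satisfying $\nabla\Omega = 0$ (which always exists, without Steinness) and then uses Steinness to make it holomorphic: Lemma~\ref{Lemma C.3} shows that the $(1,1)$-part $R^{1,1}$ of the curvature of $\nabla + \overline{\partial}$ lands in $\Omega^{0,1}(X,E)$ (with $E$ the bundle of symmetric symplectic endomorphisms), and Cartan's Theorem~B supplies $\alpha \in \Gamma(X,E)$ with $\overline{\partial}\alpha = R^{1,1}$, whence $\nabla - \alpha$ is holomorphic, torsion-free, and $\Omega$-preserving. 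So the paper corrects non-holomorphicity last, while you secure holomorphicity first (via the Atiyah class) and then run the algebraic symmetrizations of Proposition~\ref{Proposition C.1} verbatim in the holomorphic category. Your approach is more elementary and avoids Lemma~\ref{Lemma C.3} entirely; the paper's approach fits more naturally with the surrounding Fedosov framework in Appendix~\ref{Appendix B}, where smooth $\Omega$-preserving $T^{1,0}X$-connections are already in play and one wants to understand the obstruction to upgrading them to holomorphic ones.
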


The space of torsion-free $T^{1, 0}X$-connections on $X$ preserving $\Omega$ is an affine space modelled by the vector space of smooth sections of $E$, where $E$ is the holomorphic vector subbundle of $\operatorname{End}(T^{1, 0}X, \mathfrak{sp}(T^{1, 0}X))$ consisting of $\mathbb{C}$-linear maps $\phi: T_p^{1, 0}X \to \mathfrak{sp}(T_p^{1, 0}X)$ (for $p \in X$) such that $\phi(u)v = \phi(v)u$, and $\mathfrak{sp}(T^{1, 0}X)$ is the bundle of symplectic Lie algebras with respect to $\Omega$. One way to obtain a holomorphic symplectic connection is to deform a torsion-free $\Omega$-preserving $T^{1, 0}X$-connection on $T^{1, 0}X$ in the above affine space. The following lemma is needed.

\begin{lemma}
	\label{Lemma C.3}
	Let $\nabla$ be a torsion-free $T^{1, 0}X$-connection on $X$. The $(1, 1)$-part $R^{1, 1}$ of $( \nabla + \overline{\partial} )^2$ satisfies the following condition: for all smooth sections $u$ of $T^{0, 1}X$ and $v, w$ of $T^{1, 0}X$,
	\begin{align*}
		R^{1, 1}(u, v)w = R^{1, 1}(u, w)v.
	\end{align*}
\end{lemma}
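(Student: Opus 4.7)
The plan is to exploit the $\mathcal{C}^{\infty}$-linearity of $R^{1,1}$ in all three arguments and verify the identity pointwise. The combined operator $\widetilde{\nabla} := \nabla + \overline{\partial}$ is a connection on the complex vector bundle $T^{1,0}X$, so its curvature $\widetilde{\nabla}^2$ is tensorial, and $R^{1,1}(u,v)w$ is $\mathcal{C}^{\infty}$-linear in $u, v, w$. Moreover, for $u \in T^{0,1}X$ and $v \in T^{1,0}X$, the evaluation of the full curvature $2$-form on $(u,v)$ automatically picks out the $(1,1)$-piece since the $(2,0)$- and $(0,2)$-parts vanish on such a mixed pair. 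Thus it suffices to verify the identity fiberwise at an arbitrary point $p \in X$.

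Working in local holomorphic coordinates $(z^1,\ldots,z^n)$ around $p$, I would extend $u_p$ to a local section $u = \sum_{\alpha} c_{\alpha} \, \partial/\partial \overline{z}^{\alpha}$ with constant coefficients, and similarly extend $v_p, w_p$ to local holomorphic sections $v, w$ of $T^{1,0}X$ with constant coefficients in the frame $\{\partial/\partial z^{\beta}\}$. These choices are possible for any prescribed vectors at $p$, and the resulting coordinate-induced vector fields satisfy the crucial vanishings $[u,v] = [u,w] = [v,w] = 0$ as well as $\overline{\partial} v = \overline{\partial} w = 0$.

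Applying the standard curvature formula $\widetilde{\nabla}^2 w(u,v) = \overline{\partial}_u \nabla_v w - \nabla_v \overline{\partial}_u w - \widetilde{\nabla}_{[u,v]} w$, the last two terms vanish under the above choices and the expression collapses to $R^{1,1}(u,v)w = \overline{\partial}_u \nabla_v w$. By the same computation, $R^{1,1}(u,w)v = \overline{\partial}_u \nabla_w v$. Finally, torsion-freeness of $\nabla$ as a $T^{1,0}X$-connection on itself yields $\nabla_v w - \nabla_w v = [v,w] = 0$, and the identity follows. There is no serious obstacle: the lemma is essentially the first Bianchi identity for the $(1,1)$-component of the curvature in the torsion-free setting, and the only point requiring care is to record the correct curvature formula for a mixed pair in $T^{0,1}X \times T^{1,0}X$ when $\widetilde{\nabla}$ is assembled from $\nabla$ and $\overline{\partial}$.
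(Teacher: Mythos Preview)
Your argument is correct. You take a genuinely different route from the paper, which computes $R^{1,1}(u,v)w$ directly for \emph{arbitrary} smooth sections $u,v,w$ via the formula $R^{1,1}(u,v)w = [u,\nabla_v w]^{1,0} - \nabla_v([u,w]^{1,0}) - \nabla_{[u,v]^{1,0}}w - [[u,v]^{0,1},w]^{1,0}$, then subtracts the version with $v,w$ swapped and reduces the difference, through the Jacobi identity and torsion-freeness, to the vanishing of the $(0,2)$-part of the curvature of $\overline{\partial}$ acting on $u$. Your approach instead exploits tensoriality of the curvature to localize at a point and chooses constant-coefficient extensions in a holomorphic chart, which kills all bracket and $\overline{\partial}$-terms and makes the identity an immediate consequence of $\nabla_v w = \nabla_w v$. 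Your route is shorter and avoids the Jacobi identity entirely; the paper's route, by keeping general sections, makes the underlying Bianchi-type mechanism more explicit and yields an intrinsic identity valid before any coordinate choice.
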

\begin{proof}
	Note that $R^{1, 1}(u, v)w = [u, \nabla_vw]^{1, 0} - \nabla_v([u, w]^{1, 0}) - \nabla_{[u, v]^{1, 0}} w - [[u, v]^{0, 1}, w]^{1, 0}$, where for any smooth complex vector field $\xi$ on $X$, $\xi^{1, 0}, \xi^{0, 1}$ are its $(1, 0)$-part and $(0, 1)$-part respectively. Since $\nabla$ is torsion-free,
	\begin{align*}
		& R^{1, 1}(u, v)w - R^{1, 1}(u, w)v\\
		= & [u, [v, w]]^{1, 0} - [v, [u, w]^{1, 0}] - [[u, v]^{1, 0}, w] - [[u, v]^{0, 1}, w]^{1, 0} - [[u, w]^{0, 1}, v]^{1, 0}\\
		= & -[u, [v, w]]^{0, 1} + [v, [u, w]^{0, 1}] + [[u, v]^{0, 1}, w] - [[u, v]^{0, 1}, w]^{1, 0} - [[u, w]^{0, 1}, v]^{1, 0}\\
		= & -[u, [v, w]]^{0, 1} + [v, [u, w]^{0, 1}]^{0, 1} + [[u, v]^{0, 1}, w]^{0, 1},\\
		= & -( \partial_v\partial_wu - \partial_w\partial_vu - \partial_{[v, w]}u ) = 0.
	\end{align*}
	In the third line, the Jacobi identity is used.
\end{proof}

\begin{proof}[\myproof{Proposition}{\ref{Proposition C.2}}]
	Pick a torsion-free $T^{1, 0}X$-connection $\nabla$ on $X$ such that $\nabla \Omega = 0$. By Lemma \ref{Lemma C.3} and the property that $\nabla\Omega = 0$, the $(1, 1)$-part $R^{1, 1}$ of the curvature of $\nabla + \overline{\partial}$ is a $\overline{\partial}$-closed element in $\Omega^{0, 1}(X, E)$. As the higher cohomology of any analytic coherent sheaf on a Stein manifold vanishes, there exists $\alpha \in \Gamma(X, E) \subset \Omega^{1, 0}(X, \mathfrak{sp}(T^{1, 0}X))$ such that $R^{1, 1} = \overline{\partial}\alpha$. Then $\nabla - \alpha$ is a torsion-free holomorphic connection on $X$ preserving $\Omega$.
\end{proof}

\subsection*{Acknowledgement}
\quad\par
The author thanks Naichung Conan Leung for helpful discussions during the visit in the Institute of Mathematical Sciences at the Chinese University of Hong Kong in November 2023. He also thanks Daniel Burns for insightful comments on complexifications of real analytic manifolds, as well as Kwokwai Chan and Zhiming Ma for some advice on extensions of a deformation quantization.

\subsection*{Conflict of interest}
\quad\par
The author declares that there is no conflict of interest.

\bibliographystyle{amsplain}
\bibliography{References}

\end{document}